\documentclass[12pt]{amsart}
\usepackage{amssymb,mathrsfs,longtable}
\usepackage[matrix,arrow,curve]{xy}
\usepackage{setspace}
\usepackage{multirow}
\usepackage{epsfig,color}
\usepackage{textcomp}
\usepackage{url}

\sloppy \pagestyle{plain}
\thispagestyle{empty}

\textwidth=16cm \textheight=23cm \oddsidemargin=0cm
\evensidemargin=0cm \topmargin=-20pt

\newcounter{cequation}[section]

\pagenumbering{arabic}

\newtheorem{theorem}[cequation]{Theorem}
\newtheorem*{theorem*}{Theorem}
\newtheorem{lemma}[cequation]{Lemma}
\newtheorem{corollary}[cequation]{Corollary}

\newtheorem{proposition}[cequation]{Proposition}

\theoremstyle{definition}
\newtheorem{example}[cequation]{Example}
\newtheorem{definition}[cequation]{Definition}
\newtheorem*{definition*}{Definition}

\theoremstyle{remark}
\newtheorem{remark}[cequation]{Remark}

\renewcommand\arraystretch{1.3}

\makeatletter\@addtoreset{equation}{section}
\makeatletter\@addtoreset{section}{part}

\makeatother

\def \O {\mathcal{O}}

\def \CC {\mathbb{C}}

\def \P {\mathbb{P}}
\def \PP {\mathbb{P}}
\def \R {\mathbb{R}}

\def \Z {\mathbb{Z}}
\def \ZZ {\mathbb{Z}}

\def \ee {\mathbf{e}}
\def \Aff {\mathbb{A}}
\def \DD {\mathbb{D}}


\def \Pic {\mathrm{Pic}\,}

\def \bideg {\mathrm{bideg}}
\newcommand{\Spec}{\mathrm{Spec}\,}


\def \ge {\geqslant}
\def \le {\leqslant}


\usepackage[dvipsnames]{xcolor}
\usepackage{graphicx}



\title{Bounds for smooth Fano weighted complete intersections}

\author{Victor Przyjalkowski and Constantin Shramov}

\address{\emph{Victor Przyjalkowski}
\newline
\textnormal{Steklov Mathematical Institute of RAS, 8 Gubkina street, Moscow 119991, Russia.
}
\newline
\textnormal{National Research University Higher School of Economics, Russian Federation,
Laboratory of Mirror Symmetry, NRU HSE, 6 Usacheva str., Moscow, Russia, 119048.
}
\newline
\textnormal{\texttt{victorprz@mi.ras.ru, victorprz@gmail.com}}}

\address{\emph{Constantin Shramov}
\newline
\textnormal{Steklov Mathematical Institute of RAS,
8 Gubkina street, Moscow 119991, Russia.
}
\newline
\textnormal{National Research University Higher School of Economics, Laboratory of Algebraic Geometry, 6 Usacheva str., Moscow, 119048, Russia.
}
\newline
\textnormal{\texttt{costya.shramov@gmail.com}}}

\thanks{Victor Przyjalkowski was partially supported by Laboratory of Mirror Symmetry NRU HSE, RF Government grant, ag. \textnumero~14.641.31.0001.
Constantin Shramov was supported by the Russian Academic Excellence Project ``5-100'',
by the Program of the Presidium of
the Russian Academy of Sciences \textnumero~01 ``Fundamental Mathematics and
its Applications'' under grant PRAS-18-01,
and by the Foundation for the
Advancement of Theoretical Physics and Mathematics ``BASIS''.
Both authors are Young Russian Mathematics award winners and would like to thank
its sponsors and jury.
}

\begin{document}

\begin{abstract}

We prove that if a smooth variety with non-positive canonical class can be embedded into a weighted projective space of dimension~$n$ as a well
formed complete intersection
and it is not an intersection with a linear cone therein, then the weights of the weighted projective space do not exceed~\mbox{$n+1$}.
Based on this bound we classify all smooth Fano complete intersections of dimensions~$4$ and~$5$,
and compute their invariants.

\end{abstract}

\maketitle

\section{Introduction}
\label{section:intro}

Fano varieties are one of the important classes of algebraic varieties,
both from birational and biregular points of view.
It is known that smooth Fano varieties of a given dimension are bounded,
see~\cite[Theorem~0.2]{KollarMiyaokaMori}, so that one can hope for
their explicit classification (actually this is known also for $\varepsilon$-log terminal Fano varieties, see~\cite{Bi16},
but in this case any kind of explicit classification is hardly possible even in dimension $3$).
The only smooth Fano curve is~$\P^1$.
Smooth Fano varieties of dimension~$2$ are known as del Pezzo surfaces, and
they were classified long ago.
Smooth Fano threefolds were classified by V.\,Iskovskikh (see~\cite{Is77},~\cite{Is78}, or
\cite[\S12]{IP99}), and S.\,Mori and S.\,Mukai (see, \cite{MM82} and~\cite{Mu88}).
The most important and hard part of this classification concerns Fano varieties with Picard rank~$1$.
In dimension $3$ such varieties (at least if they are general in the corresponding deformation family)
appear to be either complete intersections
in a weighted projective space,
or zero loci of sections of homogeneous vector bundles on Grassmannians.
In dimensions $4$ and higher no complete classification is known, and
at the moment no reasonable approach to the problem is yet in sight.
Still there are partial classification results, including
the list of all smooth Fano fourfolds of index at least~$2$ or Picard rank greater than one and Fano varieties of high coindex
(see~\cite{Fu80},~\cite{Fu81},~\cite{Fu84},~\cite{Mu84},~\cite{Wi87},~\cite{Mu89},~\cite{Wi90}),
smooth Fano fourfolds that are zero loci of sections of homogeneous vector bundles on Grassmannians
(see~\cite{Kuchle-Grass}, \cite[\S4]{Kuchle-Geography}, \cite{Kuznetsov-Kuchle5Folds}), smooth Fano fourfolds that are
weighted complete intersections (see~\cite[Proposition~2.2.1]{Kuchle-Geography}), and some
other sporadic results (see~\cite[\S3]{Kuchle-Geography}). The purpose of this paper
is to study smooth Fano weighted complete intersections, and give effective numerical bounds that allow
to classify them.

To be able to classify weighted complete intersections of a given dimension satisfying some
nice properties, one needs an effective bound on the corresponding discrete parameters.
In~\cite[Theorem~1.3]{ChenChenChen} such a bound was obtained for codimension of a quasi-smooth
(see Definition~\ref{definition:quasi-smooth} below)
weighted complete intersection.
In~\cite[Theorem~1.3]{Chen} the degrees were bounded in terms of canonical volume and discrepancies.

We will be interested in the case of smooth Fano varieties that can be described as complete intersections in weighted projective spaces.
Also, we will deal with the case when our weighted complete intersection is Fano or Calabi--Yau.
In both cases by adjunction formula it is
actually enough to bound the weights of the corresponding weighted projective space.

Let $\P=\P(a_0,\ldots,a_n)$ be a weighted projective space, and $X\subset\P$ be a weighted complete intersection of multidegree $(d_1,\ldots,d_c)$ for some $c\ge 0$. We will usually assume that $X$
\emph{is not an intersection with a linear cone}, i.e. one has $d_j\neq a_i$ for all $i$ and $j$, cf. Remark~\ref{remark:linear}.
Finally, it is convenient to assume that $X$ is \emph{well formed}, see Definition~\ref{definition:well-formed-WCI}
and Theorem~\ref{theorem:well-formed} below.

The main result of this paper is the following.

\begin{theorem}
\label{theorem:main}
Let $X\subset\P(a_0,\ldots,a_n)$, $n\ge 2$, be a smooth well formed weighted complete intersection
of multidegree $(d_1,\ldots,d_c)$. Suppose that $X$ is not an intersection with a linear cone.
If $X$ is Fano, then for every $0\le i\le n$ one has $a_i\le n$, and for every~\mbox{$1\le j\le c$} one has~\mbox{$d_j\le n(n+1)$}.
Similarly, if $X$ is Calabi--Yau, then for every $0\le i\le n$ one has~\mbox{$a_i\le n+1$},
and for every $1\le j\le c$ one has
$d_j\le (n+1)^2$.
\end{theorem}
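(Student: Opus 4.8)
The plan is to exploit the adjunction formula together with the well-formedness and smoothness hypotheses. Recall that for a well formed weighted complete intersection $X\subset\P(a_0,\ldots,a_n)$ of multidegree $(d_1,\ldots,d_c)$ one has
$$
K_X \sim_{\Q} \O_X\Big(\sum_{j=1}^c d_j - \sum_{i=0}^n a_i\Big).
$$
So the Fano condition is equivalent to $\sum a_i > \sum d_j$, and Calabi--Yau to $\sum a_i = \sum d_j$. In both cases the quantity $\sum a_i - \sum d_j$ is bounded: it is at least $1$ in the Fano case and equal to $0$ in the Calabi--Yau case. The real content is to turn this into a bound on each individual $a_i$, which requires controlling how large a single weight can be relative to $n$.

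**Reducing to a statement about the weights.**

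First I would fix the largest weight, say $a_n=a$, and try to show $a\le n$ (resp. $a\le n+1$). The key structural fact is that smoothness of $X$ is a very strong constraint when $\P$ itself is singular along coordinate strata. A weighted projective space $\P(a_0,\ldots,a_n)$ is smooth only in trivial cases; in general its singular locus consists of coordinate subspaces corresponding to subsets $I$ of indices with $\gcd$ of the $a_i$, $i\in I$, bigger than $1$. Since $X$ is smooth, $X$ must avoid these singular strata, or meet them only where the local structure of $X$ compensates. The standard way to enforce this is: if $p$ is a singular point of $\P$ lying in the coordinate stratum indexed by $I$, then $X$ can contain $p$ only if one of the defining equations has a monomial that is a pure power of the corresponding variable (so that $X$ cuts through the singularity transversally in the relevant sense). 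Combined with the "not an intersection with a linear cone" hypothesis ($d_j\ne a_i$), this forces numerical relations: for the variable $x_n$ of weight $a$, either $X$ misses the point $p_n=(0:\cdots:0:1)$ — which by well-formedness and a counting argument forces some $d_j$ to be divisible by $a$, hence $d_j\ge 2a$ as $d_j\ne a$ — or $X$ passes through $p_n$ and then smoothness at $p_n$ forces a monomial $x_k x_n^{m}$ in some equation, giving $d_j = a_k + m\cdot a$ with $m\ge 1$.

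**Extracting the numerical bound.**

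From here the argument becomes bookkeeping with the inequality $\sum_{i=0}^n a_i \ge \sum_{j=1}^c d_j$ (Fano) or equality (Calabi--Yau). The idea is that each "large" weight $a$ must be "paid for" on the left-hand side, but also creates an obligation on the right-hand side: either a degree $d_j\ge 2a$, or a relation $d_j\ge a+1$ coming from a mixed monomial. One then argues by an extremal/counting argument over the indices: if all weights were $> n$, the sum $\sum a_i$ would be at least $(n+1)(n+1)$ in an appropriate sense, while the structure of the $d_j$ (each bounded below by something growing with the weights, via the monomial relations needed for quasi-smoothness and smoothness) would force $\sum d_j$ to be at least as large, contradicting the Fano inequality with room to spare. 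The Calabi--Yau case is the boundary case of the same estimate, explaining the shift from $n$ to $n+1$. Once the weight bound $a_i\le n$ (resp.\ $n+1$) is established, the bound on the degrees follows immediately: by adjunction $d_j < \sum a_i \le (n+1)\cdot n$ in the Fano case (resp.\ $d_j \le \sum a_i \le (n+1)^2$ in the Calabi--Yau case), since each $d_j$ is at most the sum of all weights — indeed $d_j \le \sum a_i$ because a degree-$d_j$ equation must involve monomials of weight $d_j$, and if $d_j > \sum a_i$ one checks the general member is singular or the complete intersection drops dimension.

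**The main obstacle.**

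The hard part will be the careful case analysis needed to convert "smooth" into usable numerical inequalities at each singular stratum of $\P$, especially for strata indexed by subsets $I$ of size $\ge 2$ (codimension $\ge 2$ singular loci), where the interplay between several weights sharing a common factor must be handled. One has to be careful that the monomials forced by smoothness at one stratum are compatible with those forced at another, and that the well-formedness hypothesis is used correctly to rule out spurious configurations. I expect the cleanest route is to first treat the case $c=0$ (no equations, so $X=\P$ is smooth, forcing all $a_i=1$), then $c=1$ (hypersurfaces), where the combinatorics of a single degree $d_1$ against the weights can be pushed through by hand, and finally bootstrap to general $c$ by an inductive argument — intersecting with one hypersurface at a time and tracking how the bounds degrade, using that a smooth well formed weighted complete intersection of positive dimension remains well behaved under taking general hyperplane-type sections.
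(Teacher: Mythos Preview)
Your setup is correct: adjunction gives $\sum a_i > \sum d_j$ (Fano) or $=\sum d_j$ (Calabi--Yau), and smoothness at the coordinate point $p_n$ together with the linear-cone hypothesis does force $d_c\ge 2a_n$. The deduction of the degree bound from the weight bound is also fine. But the central step --- getting from these constraints to $a_n\le n$ --- is never actually carried out, and the mechanism you sketch does not close. Knowing only $d_c\ge 2a_n$ and $d_j\ge a_j+1$ for the other $j$ is far too weak to contradict $\sum a_i>\sum d_j$: a single large weight $a_n$ contributes $a_n$ to the left side and forces at most one $d_j\ge 2a_n$ on the right, which is easily absorbed if the remaining $a_i$ are small. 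Your suggestion ``if all weights were $>n$\ldots'' attacks the wrong statement; you need to bound \emph{one} weight, not rule out all being large. The proposed induction on $c$ by slicing is also dangerous: a general member of $|{\mathcal O}(d_j)|$ restricted to a smooth weighted complete intersection need not yield a smooth well formed weighted complete intersection of the same type in the same ambient space.

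The paper's proof uses two ingredients you are missing entirely. First, smoothness along \emph{every} singular stratum of $\P$ (not just the vertices) gives, for every prime power $p^r$, at least as many $d_j$ divisible by $p^r$ as there are $a_i$ divisible by $p^r$; summing over $p,r$ yields the product inequality $\prod a_i \le \prod d_j$. Second, the codimension bound $n+1\ge 2c+1$ for quasi-smooth Fano weighted complete intersections (Chen--Chen--Chen). With these plus the pointwise inequalities $D_1\ge 2A_1$, $D_j\ge A_j$, the paper forgets that the $a_i,d_j$ are integers and poses a continuous optimization problem: maximize $L=\sum A_i-\sum D_j$ over the real region cut out by these inequalities, subject to $A_1\ge N$. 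A Lagrange-multiplier analysis (the bulk of the appendix) shows $L\le 0$ on this region, contradicting $L>0$. The Calabi--Yau case is reduced to the Fano case by adjoining a weight~$1$. Your stratum-by-stratum case analysis is in spirit how one obtains the product inequality, but it is an input to the argument, not the argument itself.
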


To prove Theorem~\ref{theorem:main} we use the following approach. Exploiting smoothness assumption,
we write down a bunch of necessary conditions on the parameters~$a_i$ and~$d_j$, which appear to be
inequalities (sometimes involving products of weights or degrees).
On the other hand, Fano or Calabi--Yau condition implies an inequality between the sums
of~$a_i$ and~$d_j$. Then we treat all these
inequalities as if $a_i$ and $d_j$ were arbitrary real numbers,
and solve the corresponding optimization problem
using the standard down-to-earth method of Lagrange multipliers.

The bounds for the largest weight of $\P$ given by Theorem~\ref{theorem:main} are sharp for an infinite set of dimensions, see Remark~\ref{remark:sharp} below.

Using Theorem~\ref{theorem:main}, we will give a classification
of weighted Fano complete intersections of dimensions $4$ and $5$, see \S\ref{section:tables} below.
Note that a classification of four-dimensional weighted Fano complete intersections
was already obtained by O.\,K\"uchle in \cite[Proposition~2.2.1]{Kuchle-Geography}; his method builds on a classification of weighted homogeneous polarized Calabi--Yau complete intersections, see~\cite[Main Theorem~II]{Og91}.
In a way this is closer to the methods that were classically used in a classification of Fano threefolds,
but one can hardly expect that it can be easily generalized to higher dimensions.

The plan of the paper is as follows.
In~\S\ref{section:smooth}
we recall some basic properties of weighted complete intersections. In~\S\ref{section:bound} we prove Theorem~\ref{theorem:main}.
In~\S\ref{section:Hodge} we explain the (well known) method that can be used
to compute Hodge numbers of smooth weighted complete intersections.
In~\S\ref{section:tables} we provide a classification of smooth Fano weighted complete intersections of dimensions $4$ and $5$.
Finally, in Appendix~\ref{section:optimization} we collect some (nearly elementary) auxiliary material
used in~\S\ref{section:bound}.

\medskip

\textbf{Notation and conventions.}
All varieties are compact and are defined over the field of complex numbers $\CC$.
For a bigraded ring $R$ we denote its $(p,q)$-component by $R_{(p,q)}$.
For a weighted complete intersection $X$ in $\PP(a_0,\ldots,a_n)$ of multidegree $(d_1,\ldots,d_c)$
the number $\sum a_i-\sum d_j$ is denoted by $I(X)$.

\medskip

\textbf{Acknowledgments.}
We are grateful to K.\,Besov, A.\,Corti, A.\,Kuznetsov, T.\,Okada, and Yu.\,Prokhorov for useful discussions,
and to A.\,Harder who helped us to compute Hodge numbers in~\S\ref{section:tables}. Special thanks go to the referees for numerous valuable comments.

\section{Smoothness}
\label{section:smooth}

We recall here some basic properties of weighted complete intersections. We refer the reader
to~\cite{Do82} and~\cite{IF00} for more details.
Let $a_0,\ldots,a_n$ be positive integers. Consider the graded algebra~\mbox{$\CC[x_0,\ldots,x_n]$},
where the grading is defined by assigning the weights $a_i$ to the variables~$x_i$.
Put
$$
\P=\P(a_0,\ldots,a_n)=\mathrm{Proj}\,\CC[x_0,\ldots,x_n].
$$

\begin{definition}[{see~\cite[Definition 5.11]{IF00}}]
\label{definition:well-formed-P}
The weighted projective space $\P$ is said to be \emph{well formed} if the greatest common divisor of any $n$ of the weights~$a_i$ is~$1$.
\end{definition}

Any weighted projective space is isomorphic to a well formed one, see~\cite[1.3.1]{Do82}.

\begin{lemma}[{see~\cite[5.15]{IF00}}]
\label{lemma:singularities-of-P}
Suppose that $\PP$ is well formed. Then the singular locus of~$\PP$ is a union of strata
$$
\Lambda_J=\left\{(x_0:\ldots:x_n) \mid x_j=0 \text{\ for all\ } j\notin J\right\}
$$
for all subsets $J\subset \{0,\ldots,n\}$ such that the greatest common divisor of the weights~$a_j$
for~\mbox{$j\in J$} is greater than~$1$.
\end{lemma}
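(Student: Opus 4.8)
The plan is to present a neighbourhood of an arbitrary point of $\P$ as a quotient singularity and then to read off $\Sing\P$ from the classification of such singularities. I assume, as one may, that $\P$ is well formed (see Definition~\ref{definition:well-formed-P}); in particular $\gcd(a_0,\ldots,a_n)=1$. The only structural input I use is the standard description of the affine charts of $\P$: the chart $U_i=\{x_i\neq 0\}$ is isomorphic to $\A^n/\mu_{a_i}$, where $\mu_{a_i}$ is the group of $a_i$-th roots of unity acting on $\A^n=\Spec\CC[x_j\mid j\neq i]$ diagonally by $\zeta\cdot x_j=\zeta^{a_j}x_j$ (see~\cite[1.3.1]{Do82} or~\cite{IF00}).

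First I would fix a point $p\in\P$, let $J=J(p)=\{i\mid x_i\neq 0\}$ be its support, and put $m=m_J=\gcd(a_j\mid j\in J)$. Since $\Lambda_J$ is the closure of the locally closed stratum of points with support exactly $J$, and the natural torus action on $\P$ has these strata as its orbits, the closed set $\Sing\P$ is automatically a union of some of the $\Lambda_J$; the real task is to decide which ones occur. Working in a chart $U_{i_0}$ with $i_0\in J$, one checks that $p$ is represented by a point $q\in\A^n$ whose stabilizer in $\mu_{a_{i_0}}$ is exactly $\mu_m$; since $\mu_{a_{i_0}}$ is finite, $\mu_m$ fixes $q$, and the action is diagonal, it follows that an analytic neighbourhood of $p$ in $\P$ is isomorphic to a neighbourhood of the origin in $\A^n/\mu_m$, where $\mu_m$ acts diagonally by the weights $a_j\bmod m$ ($j\neq i_0$). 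In particular $\P$ is smooth at $p$ whenever $m=1$, so the strata $\Lambda_J$ with $m_J=1$ lie in the smooth locus, and it remains to deal with the case $m>1$.

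The crux, and the only place where well-formedness is really used, is to show that $\A^n/\mu_m$ is singular at the origin whenever $m>1$. Here I would appeal to the Chevalley--Shephard--Todd theorem: $\A^n/G$ is smooth at the origin if and only if the finite group $G$, acting through its image in the general linear group, is generated by pseudo-reflections --- and for a diagonal action these are exactly the elements fixing a coordinate hyperplane. Choose a prime $\ell\mid m$; then $\ell$ divides $a_k$ for every $k\in J$, and well-formedness forces at least two indices $i_1,i_2\notin J$ with $\ell\nmid a_{i_1}$ and $\ell\nmid a_{i_2}$, since otherwise at least $n$ of the weights $a_0,\ldots,a_n$ would be divisible by $\ell$. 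For each $j\neq i_0$, the elements of $\mu_m$ fixing the hyperplane $\{x_j=0\}$ are precisely those acting trivially on all $x_k$ with $k\neq i_0,j$, and they form a subgroup of order $\gcd(m,a_k\mid k\neq i_0,\ k\neq j)$, which is prime to $\ell$ because one of $i_1,i_2$ is such an index $k$; hence the subgroup of $\mu_m$ generated by all pseudo-reflections has order prime to $\ell$, so it is a proper subgroup of $\mu_m$, the group $\mu_m$ is not generated by pseudo-reflections, and $\A^n/\mu_m$ is singular. Combining the two paragraphs gives $p\in\Sing\P$ if and only if $m_{J(p)}>1$, which is the statement of the lemma. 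I expect this converse implication to be the only real obstacle: the direction $m=1\Rightarrow$ smooth is immediate, whereas the statement is false without well-formedness (for instance $\P(1,2,2)\cong\P^2$ is smooth although $\gcd(a_1,a_2)=2$), so that hypothesis must be used essentially, as above. One could also organise the whole proof toric-geometrically: $\P$ is the toric variety of the fan whose rays are spanned by the images $\bar e_i$ of the standard basis vectors of $\Z^{n+1}/\Z(a_0,\ldots,a_n)$ (primitive because $\P$ is well formed), $\Lambda_J$ is the closure of the orbit of the cone on $\{\bar e_i\mid i\notin J\}$, and a short computation shows this cone is nonsingular if and only if $\gcd(a_j\mid j\in J)=1$.
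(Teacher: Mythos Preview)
The paper does not supply its own proof of this lemma; it is simply quoted from \cite[5.15]{IF00}. Your argument is correct and follows the standard route (also the one underlying the cited reference): identify the chart $U_{i_0}$ with the cyclic quotient $\A^n/\mu_{a_{i_0}}$, compute the stabiliser of a point with support $J$ as $\mu_{m_J}$, and then invoke Chevalley--Shephard--Todd to decide when the resulting cyclic quotient is genuinely singular, using well-formedness to rule out pseudo-reflections. One step that deserves a word of justification is the passage from ``each reflection subgroup $H_j$ has order prime to $\ell$'' to ``the subgroup they generate has order prime to $\ell$'': this is immediate here because $\mu_m$ is cyclic, so that subgroup has order $\mathrm{lcm}_j|H_j|$. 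Your closing toric remark is also correct and yields the same criterion $\gcd(a_j\mid j\in J)=1$ for smoothness of the cone on $\{\bar e_i\mid i\notin J\}$.
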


\begin{definition}[{see~\cite[Definition 6.9]{IF00}}]
\label{definition:well-formed-WCI}
A subvariety $X\subset \PP$ of codimension $c$ is said to be \emph{well formed}
if~$\PP$ is well formed and
$$
\mathrm{codim}_X \left( X\cap\mathrm{Sing}\,\P \right)\ge 2.
$$
\end{definition}

The following notion is a replacement of smoothness suitable for subvarieties of
weighted projective spaces.

\begin{definition}[{see~\cite[Definition 6.3]{IF00}}]
\label{definition:quasi-smooth}
Let $p\colon \Aff^{n+1}\setminus\{0\}\to \PP$ be the natural projection. A subvariety $X\subset \PP$ is said to be \emph{quasi-smooth} if $p^{-1}(X)$ is smooth.
\end{definition}

We say that a variety $X\subset\P$ of codimension $c$ is a \emph{weighted complete
intersection of multidegree $(d_1,\ldots,d_c)$} if its weighted homogeneous ideal in $\CC[x_0,\ldots,x_n]$
is generated by a regular sequence of $c$ homogeneous elements of degrees $d_1,\ldots,d_c$.
Note that in general a weighted complete intersection
is not even locally a complete intersection in the usual sense.

\begin{remark}\label{remark:WCI-warning}
It is possible that a weighted complete intersection
of a given multidegree in~$\P$ does not exist, even if $c$ is small.
For example, there is no such thing as a hypersurface of degree~\mbox{$d<\min(a_0,\ldots,a_n)$} in~$\P$,
or a weighted complete intersection of multidegree $(2,2)$ in~\mbox{$\P(1,3,4,5)$}.
\end{remark}

Singularities of quasi-smooth well formed weighted complete intersections
can be easily described.

\begin{proposition}[{see \cite[Proposition 8]{Di86}}]
\label{proposition:singularities-of-X}
Let $X\subset \PP$ be a quasi-smooth well formed weighted complete intersection. Then the singular locus of $X$ is
the intersection of $X$ with the singular locus of $\PP$.
\end{proposition}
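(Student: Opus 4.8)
The plan is to establish the two inclusions $\Sing X\subseteq\Sing\P$ and $X\cap\Sing\P\subseteq\Sing X$ separately; the first uses quasi-smoothness, and the second is where well-formedness enters. Throughout I use the affine cone $\widetilde X=p^{-1}(X)\cup\{0\}\subset\Aff^{n+1}$, which is smooth away from the origin by quasi-smoothness and has dimension $\dim X+1$, together with the $\C^*$-action on $\Aff^{n+1}$ whose weight on the $i$-th coordinate is $a_i$.

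For the inclusion $\Sing X\subseteq\Sing\P$ I would show that $X$ is smooth at every point $x\in X\setminus\Sing\P$. By Lemma~\ref{lemma:singularities-of-P}, for such $x$ the stabilizer in $\C^*$ of any point of $p^{-1}(x)$ is trivial, and since the locus of points with nontrivial stabilizer is closed, the $\C^*$-action is free over a neighbourhood of $x$. Hence the restriction of $p$ over $\P\setminus\Sing\P$ is a smooth morphism, being the geometric quotient of a smooth variety by a free $\C^*$-action. As $p^{-1}(X)$ is smooth, so is its open subset $p^{-1}\bigl(X\cap(\P\setminus\Sing\P)\bigr)$, and since this maps smoothly and surjectively onto $X\cap(\P\setminus\Sing\P)$, the latter is smooth.

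The reverse inclusion $X\cap\Sing\P\subseteq\Sing X$ is the main point. Take $x\in X\cap\Sing\P$ and a lift $\tilde x\in\widetilde X\setminus\{0\}$. By Lemma~\ref{lemma:singularities-of-P} the stabilizer of $\tilde x$ in $\C^*$ is a group $\mu_m$ with $m>1$. Since $\mu_m$ fixes $\tilde x$, it acts trivially on the tangent line to the orbit $\C^*\cdot\tilde x$, while by a standard slice argument (the orbit is closed in $\Aff^{n+1}\setminus\{0\}$, so Luna's theorem applies) there is a smooth $\mu_m$-invariant slice $S\ni\tilde x$ transverse to the orbit, with $\dim S=\dim X$, and a local analytic identification of germs $(X,x)\cong(S/\mu_m,\cdot)$. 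Suppose for contradiction that $X$ is smooth at $x$. Then $(\widehat{\mathcal O}_{S,\tilde x})^{\mu_m}$ is regular; linearizing the $\mu_m$-action on $\widehat{\mathcal O}_{S,\tilde x}\cong\C[[t_1,\dots,t_{\dim X}]]$ and applying the Chevalley--Shephard--Todd theorem, $\mu_m$ acts on $T_{\tilde x}S$ as a group generated by pseudo-reflections; being cyclic and nontrivial, it fixes a hyperplane in $T_{\tilde x}S$. Combining this with the triviality of the $\mu_m$-action on the orbit direction, the fixed locus $\widetilde X^{\mu_m}$ has dimension $\dim X$ near $\tilde x$. This locus is $\C^*$-invariant and contains $\tilde x$, so its image $p(\widetilde X^{\mu_m})$ has dimension $\dim X-1$; and every point of $\widetilde X^{\mu_m}$ has all nonzero coordinates $y_i$ supported on indices with $m\mid a_i$, so Lemma~\ref{lemma:singularities-of-P} places its image in $\Sing\P$. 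Thus $X\cap\Sing\P$ would have codimension one in $X$ near $x$, contradicting well-formedness of $X$.

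The main obstacle is precisely this local analysis: identifying the germ $(X,x)$ with a cyclic quotient singularity $S/\mu_m$ and using Chevalley--Shephard--Todd to see that smoothness of this germ forces a pseudo-reflection, hence a codimension-one piece of $\Sing\P$ inside $X$, which well-formedness forbids. This is essentially the argument of~\cite[Proposition~8]{Di86}, and the bookkeeping one must be careful about is the dimension count for $\widetilde X^{\mu_m}$ and the verification that its $p$-image indeed lands in $\Sing\P$.
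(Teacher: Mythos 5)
The paper itself does not prove this proposition --- it is quoted from~\cite[Proposition~8]{Di86} --- so I can only assess your argument on its own terms. Your first inclusion ($\Sing X\subseteq\Sing\P$) is fine, and your strategy for the second one (slice at a lift $\tilde x$, identify the germ of $X$ at $x$ with $(S/\mu_m,\bar x)$, apply Chevalley--Shephard--Todd, and contradict well formedness by exhibiting a codimension-one piece of $X\cap\Sing\P$) is the right one. However, there is a genuine error at the key step: from ``the image of $\mu_m$ in $\GL(T_{\bar x}S)$ is generated by pseudo-reflections'' you conclude that $\mu_m$, ``being cyclic and nontrivial, fixes a hyperplane in $T_{\bar x}S$''. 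That implication is false. For example, let $\mu_6$ act on $\CC^2$ by $\zeta\cdot(u,v)=(\zeta^2u,\zeta^3v)$: the elements of orders $2$ and $3$ are pseudo-reflections and generate the whole (cyclic, faithful) image, the quotient $\Spec\CC[u^3,v^2]$ is smooth, yet the fixed locus of the full group is just the origin. Consequently your dimension count $\dim\widetilde X^{\mu_m}=\dim X$ near $\tilde x$ is not justified: with slice weights as in this example the full-group fixed locus has codimension at least $2$ in $\widetilde X$, and no contradiction with well formedness would follow, so the proof as written does not close.

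The repair is short. Chevalley--Shephard--Todd gives you \emph{some} nontrivial pseudo-reflection $g$ in the image of $\mu_m$, unless that image is trivial; in the latter case $\mu_m$ fixes the germ of $S$ pointwise, so (by linearizing on the smooth germ of $\widetilde X$) the cone lies locally in the linear subspace $\{x_j=0 \mid \zeta^{a_j}\neq 1\}$ for a nontrivial $\zeta\in\mu_m$, hence $X$ lies locally in $\Sing\P$ by Lemma~\ref{lemma:singularities-of-P}, violating well formedness outright. With such a $g$ of order $>1$, the fixed locus $S^{g}$ has codimension one in $S$, its $\CC^*$-sweep lies in $\widetilde X^{g}$ and has dimension $\dim X$, and every point of $\widetilde X^{g}$ has support contained in $\{j \mid \mathrm{ord}(g)\ \text{divides}\ a_j\}$, so its image under $p$ lies in $\Sing\P$ by Lemma~\ref{lemma:singularities-of-P}; this produces the codimension-one piece of $X\cap\Sing\P$ and the desired contradiction (i.e.\ replace $m$ by $\mathrm{ord}(g)$ throughout your last step). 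Two smaller points: Luna's theorem should be applied on the invariant affine open set $\{x_i\neq0\}$ rather than on $\Aff^{n+1}\setminus\{0\}$, or, more simply, you can use the explicit slice $\widetilde X\cap\{x_i=1\}$, which is smooth and transverse to the orbit; and you should say a word about possible non-faithfulness of the $\mu_m$-action on the slice before invoking Chevalley--Shephard--Todd, as handled above.
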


\begin{remark}
Note that the definition of ``general position'' in~\cite{Di86}
coincides with our definition of well formedness.
\end{remark}

Recall that the weighted complete intersection $X$ is said to be an intersection
with a linear cone if one has $d_j=a_i$ for some~$i$ and~$j$.

\begin{remark}
\label{remark:linear}
If this condition fails, one can exclude the $i$-th weighted homogeneous coordinate and think about
$X$ as a weighted complete intersection in a weighted projective space of lower dimension, provided that
$X$ is general enough, cf. Remark~\ref{remark:linear-cone} below. Note however that
in general this new weighted projective space may fail to be well formed, and the new
weighted complete intersection may fail to be nice in other ways as well.
\end{remark}

It appears that the assumptions that the complete intersection is well formed, quasi-smooth, and
is not an intersection with a linear cone are not always independent.
In principle it can allow us to drop some of the assumptions in the rest of the paper,
but we will refrain from doing so to keep the assertions more explicit.

\begin{theorem}[{see \cite[Theorem 6.17]{IF00}}]
\label{theorem:well-formed}
Suppose that the weighted projective space $\PP$ is well formed. Then
any quasi-smooth complete intersection of dimension at least~$3$ in~$\PP$
is either an intersection with a linear cone
or well formed.
\end{theorem}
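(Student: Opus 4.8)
The plan is to argue by contradiction: assume $X$ is quasi-smooth of dimension $n-c\ge 3$, is not an intersection with a linear cone, but is not well formed, and derive a contradiction. Since $\PP$ is well formed, Lemma~\ref{lemma:singularities-of-P} gives $\Sing\PP=\bigcup_p\Lambda_{J_p}$, the union being over primes $p$ with $J_p=\{i\mid p\mid a_i\}$, and well-formedness of $\PP$ forces $|J_p|\le n-1$. By Definition~\ref{definition:well-formed-WCI}, the failure of well-formedness means that some component of $X\cap\Sing\PP$, hence some $X\cap\Lambda_{J_p}$, has codimension $\le 1$ in $X$; fix such a prime and put $s=|J_p|\le n-1$. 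Passing to affine cones, let $\widehat X\subset\A^{n+1}$ be the affine cone over $X$, which is smooth away from the origin by quasi-smoothness (Definition~\ref{definition:quasi-smooth}), let $L=\{x_i=0:i\notin J_p\}\cong\A^s$, and $\widehat Y=\widehat X\cap L$. Then $\widehat Y$ is conical, contains the origin, and $\dim\widehat Y=\dim(X\cap\Lambda_{J_p})+1\ge\dim X=n-c$.

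The core of the argument is a Jacobian computation along $\widehat Y$. Write the defining equations of $X$ as $f_1,\dots,f_c$, put $g_j=f_j|_L$, and let $K=\{j\mid g_j\equiv 0\}$ (note $g_j\equiv 0$ whenever $p\nmid d_j$, since all weights in $J_p$ are divisible by $p$). Ordering the rows so that the indices outside $K$ come first and splitting the columns of the Jacobian of $(f_1,\dots,f_c)$ according to $J_p$ and its complement, we see that for $j\in K$ the $J_p$-columns of the $j$-th row are $\partial g_j/\partial x_i\equiv0$, so at any $q\in L$ the Jacobian has block form $\bigl(\begin{smallmatrix}A&C\\0&B\end{smallmatrix}\bigr)$, where $B$ has size $|K|\times(n+1-s)$ and entries $(\partial f_j/\partial x_i)|_L(q)$ for $j\in K$, $i\notin J_p$; each such entry is a homogeneous polynomial in the variables $\{x_l:l\in J_p\}$ of degree $d_j-a_i$, which is identically zero when $d_j<a_i$ and never a nonzero constant, because $X$ is not an intersection with a linear cone (so $d_j\ne a_i$). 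For $q\in\widehat Y\setminus\{0\}$ quasi-smoothness forces this Jacobian to have rank $c$; since $\operatorname{rk}\bigl(\begin{smallmatrix}A&C\\0&B\end{smallmatrix}\bigr)\le\operatorname{rk}(A\ C)+\operatorname{rk}B\le(c-|K|)+\operatorname{rk}B$, we get $\operatorname{rk}B(q)=|K|$. Hence $\widehat Y$ meets the determinantal locus $W=\{q\mid\operatorname{rk}B(q)<|K|\}\subset\A^s$ only at the origin; note $0\in W$ because all maximal minors of $B$ vanish at $0$ (their entries do, by the previous remark).

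To finish in the principal case — which is the expected one, occurring whenever the $g_j$ with $j\notin K$ cut out $X\cap\Lambda_{J_p}$ in the expected codimension (equivalently, form a regular sequence on $L$), and which forces $|K|\ge n-s$ — one invokes the standard bound on the codimension of determinantal ideals. If $|K|>n+1-s$ then $W=\A^s$ and $\dim\widehat Y\le 0$ is already absurd; if $n-s\le|K|\le n+1-s$, then $W$ is cut out by maximal minors of $B$, so $\mathrm{codim}_{\A^s}W\le 2$, and the affine intersection inequality (both sets contain the origin) gives $\dim(\widehat Y\cap W)\ge\dim\widehat Y+\dim W-s\ge(n-c)-2=\dim X-2\ge 1$, contradicting $\widehat Y\cap W\subseteq\{0\}$. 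This is precisely where the hypothesis $\dim X\ge 3$ enters; for surfaces the estimate degrades to $\ge-1$ and the statement is indeed false.

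The real obstacle is the remaining case $|K|<n-s$ — in particular $K$ could be empty, with $X\cap\Lambda_{J_p}$ unexpectedly large and the $g_j$ failing to form a regular sequence. Then $B$ is no longer of ``maximal minors'' shape and the crude bound $\mathrm{codim}W\le 2$ is lost, so one must show separately that an unexpectedly large $X\cap\Lambda_{J_p}$ is incompatible with quasi-smoothness. I would attack this by exploiting the non-regular-sequence structure: the surviving $g_j$ then vanish on a whole component of the partial intersection, which forces the $A$-block to drop rank on $\widehat Y$ and hence (via the same inequality $\operatorname{rk}\bigl(\begin{smallmatrix}A&C\\0&B\end{smallmatrix}\bigr)\le\operatorname{rk}(A\ C)+\operatorname{rk}B$) pins the rank of $B$ down more tightly; alternatively one can pass to a general member of the family of weighted complete intersections of multidegree $(d_1,\dots,d_c)$ in $\PP$ — quasi-smoothness being an open condition in such families — and use the combinatorial description of quasi-smoothness for general members. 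In either approach the remaining work is bookkeeping with the differences $d_j-a_i$ rather than a new idea.
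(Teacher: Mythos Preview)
The paper does not contain a proof of this statement: it is quoted from \cite[Theorem~6.17]{IF00} and used as a black box, so there is no ``paper's own proof'' to compare against. What follows is an assessment of your argument on its own merits.

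Your treatment of the case $|K|\ge n-s$ is correct and is essentially the heart of Iano-Fletcher's argument: the block shape of the Jacobian restricted to $L$, the observation that no entry of $B$ is a nonzero constant (this is exactly where ``not a linear cone'' enters), and the determinantal codimension bound combine to force $\dim(\widehat Y\cap W)\ge\dim X-2\ge 1$, contradicting $\widehat Y\cap W\subset\{0\}$. Note that you only use $|K|\ge n-s$ here, not the regular-sequence hypothesis per se, so your ``principal case'' is really just ``$|K|\ge n-s$''.

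The case $|K|<n-s$ is a genuine gap, and neither of your proposed fixes works as stated. For the first: failure of $(g_j)_{j\notin K}$ to be a regular sequence on $L$ does not force the $A$-block to drop rank at points of $\widehat Y$. A toy model is $g_1=x_1$, $g_2=x_1x_2$ on $\Aff^2$: this is not a regular sequence, the common zero locus $\{x_1=0\}$ has unexpectedly large dimension, yet the Jacobian $\bigl(\begin{smallmatrix}1&0\\ x_2&x_1\end{smallmatrix}\bigr)$ has full rank at the generic point of that locus. So ``non-regular sequence $\Rightarrow$ rank drop of $A$'' is false, and with it your inequality chain $\mathrm{rk}\bigl(\begin{smallmatrix}A&C\\0&B\end{smallmatrix}\bigr)\le\mathrm{rk}(A\ C)+\mathrm{rk}\,B$ no longer pins $\mathrm{rk}\,B$ any tighter. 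For the second fix: passing to a general member of the family proves nothing about the specific $X$ you started with. Well-formedness is the statement that $\mathrm{codim}_X(X\cap\Sing\PP)\ge 2$, and this codimension genuinely varies in the family, so knowing it for a general member does not establish it for your given quasi-smooth $X$.

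What is missing is an argument that $|K|<n-s$ is actually incompatible with quasi-smoothness (not merely with generic quasi-smoothness). One route: at a generic $q\in\widehat Y\setminus\{0\}$ the Zariski tangent space of the scheme $V(g_j:j\notin K)\subset L$ is $\ker A(q)$, so $s-\mathrm{rk}\,A(q)\ge\dim_q\widehat Y\ge n-c$, i.e.\ $\mathrm{rk}\,A(q)\le s-n+c$. Combined with the full-rank condition on the whole Jacobian, this forces a nontrivial lower bound on the contribution of the $C$-block modulo the row space of $B$, and one has to chase this carefully to reach a contradiction. This is more than bookkeeping; it is the substantive remaining step.
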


There is the following version of the adjunction
formula that holds for quasi-smooth well formed weighted complete intersections.

\begin{lemma}[{see~\cite[Theorem 3.3.4]{Do82}, \cite[6.14]{IF00}}]
\label{lemma:adjunction}
Let $X\subset \PP$ be a quasi-smooth well formed weighted complete intersection of multidegree $(d_1,\ldots,d_c)$.
Then
$$
\omega_X=\O_X\left(\sum d_i-\sum a_j\right).
$$
\end{lemma}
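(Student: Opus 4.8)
The plan is to reduce the computation to the smooth locus of $X$, where the ordinary adjunction formula applies, and then propagate the resulting isomorphism across the singularities of $X$ using reflexivity. The hypotheses enter in exactly one way: well formedness, via Proposition~\ref{proposition:singularities-of-X}, forces the singular locus of $X$ to sit in codimension at least~$2$, which is what makes the extension step legitimate.

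First I would settle the ambient case $c=0$, namely $\omega_\P=\O_\P(-\sum a_i)$. Write $S=\CC[x_0,\ldots,x_n]$ with $\deg x_i=a_i$, so that $\P=\mathrm{Proj}\,S$. Well formedness of $\P$ (Definition~\ref{definition:well-formed-P}) forces, through Lemma~\ref{lemma:singularities-of-P}, the locus $\Sing\,\P$ to have codimension at least~$2$, so it suffices to identify the canonical sheaf on the smooth locus $\P^{\circ}=\P\setminus\Sing\,\P$ and then extend. On $\P^{\circ}$ the generalized Euler sequence
$$0 \to \O_{\P^{\circ}} \to \bigoplus_{i=0}^n \O_{\P^{\circ}}(a_i) \to T_{\P^{\circ}} \to 0$$
consists of vector bundles, and taking determinants gives $\omega_{\P^{\circ}}^{-1}=\det T_{\P^{\circ}}=\O_{\P^{\circ}}(\sum a_i)$. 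Equivalently one computes the graded canonical module of $S$ to be $\omega_S=S(-\sum a_i)$, the generator $dx_0\wedge\cdots\wedge dx_n$ of $\Omega^{n+1}_{S/\CC}$ having degree $\sum a_i$, and sheafifies. Either way $\omega_{\P^{\circ}}=\O_{\P^{\circ}}(-\sum a_i)$, and since $\omega_\P$ and $\O_\P(-\sum a_i)$ are both reflexive and agree on $\P^{\circ}$, they agree on $\P$.

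Next I would run ordinary adjunction on the smooth locus. By Definition~\ref{definition:quasi-smooth} the punctured cone $p^{-1}(X)$ is smooth, so $X$ is normal and Cohen--Macaulay, being cut out of $\A^{n+1}$ by a regular sequence $f_1,\ldots,f_c$ of degrees $d_1,\ldots,d_c$; by Proposition~\ref{proposition:singularities-of-X} its smooth locus is exactly $X^{\circ}=X\cap\P^{\circ}$. On $\P^{\circ}$ the sheaves $\O(d_j)$ are genuine line bundles and $X^{\circ}$ is a smooth complete intersection of divisors in the systems $|\O(d_j)|$, so successive application of the usual adjunction formula, together with the value of $\omega_{\P^{\circ}}$ from the previous step, yields
$$\omega_{X^{\circ}} = \left(\omega_{\P^{\circ}} \otimes \O_{\P^{\circ}}\Big(\sum_j d_j\Big)\right)\Big|_{X^{\circ}} = \O_{X^{\circ}}\Big(\sum_j d_j - \sum_i a_i\Big).$$

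Finally I would extend this identity across $\Sing\,X$. By Definition~\ref{definition:well-formed-WCI} the complement $X\cap\Sing\,\P$ of $X^{\circ}$ in $X$ has codimension at least~$2$. The dualizing sheaf $\omega_X$ of the Cohen--Macaulay variety $X$ is reflexive, and for well formed $X$ the divisorial sheaf $\O_X(\sum d_j-\sum a_i)$ is likewise reflexive, its failure to be locally free being confined to codimension $\ge 2$. Two reflexive sheaves that agree outside a closed subset of codimension at least~$2$ are isomorphic, so the isomorphism on $X^{\circ}$ propagates to all of $X$, giving $\omega_X=\O_X(\sum d_i-\sum a_j)$. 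The main point requiring care -- and the only place the assumptions are truly used -- is this last comparison: one must know that both sheaves are $S_2$ and that the bad locus has codimension at least~$2$, which is precisely the content of quasi-smoothness combined with well formedness via Proposition~\ref{proposition:singularities-of-X}. Carrying out the same argument algebraically, by computing the graded canonical module $\omega_R=R(\sum d_j-\sum a_i)$ of the cone $R=S/(f_1,\ldots,f_c)$ with $\mathrm{Ext}$ against its Koszul resolution and sheafifying, produces the identical conclusion and makes the reflexive comparison most transparent.
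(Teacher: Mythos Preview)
The paper does not supply its own proof of this lemma: it is stated with a citation to \cite[Theorem~3.3.4]{Do82} and \cite[6.14]{IF00} and used as a black box. Your argument is correct and is essentially the standard proof found in those references: compute $\omega_\P$ via the Euler sequence (or the graded canonical module), apply ordinary adjunction on the smooth locus $X^\circ=X\cap\P^\circ$, and extend across $\Sing X$ by reflexivity, using Proposition~\ref{proposition:singularities-of-X} together with well formedness to ensure the bad locus has codimension at least~$2$. The algebraic variant you mention at the end, via $\omega_R\cong R(\sum d_j-\sum a_i)$ from the Koszul resolution, is precisely Dolgachev's route in \cite{Do82}.
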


Along with quasi-smooth weighted complete intersections one may consider
those weighted complete intersections that are smooth in the usual sense.
Note that if we do not assume that a weighted complete intersection $X\subset\PP$ is well formed,
then $X$ may be smooth even if it passes through the
singularities of $\PP$. An example of such a behavior is given by a line
(which is a hypersurface of degree~$1$) on the two-dimensional quadratic cone~\mbox{$\PP=\PP(1,1,2)$}.
On the other hand, if $X$ is both smooth and well formed, then it must be disjoint from
the singular locus of~$\PP$. Furthermore, in this case $X$ can be shown to be quasi-smooth
(although this fact is not quite obvious, see Corollary~\ref{corollary:smooth-vs-quasismooth} below).

\begin{proposition}\label{proposition:smooth-does-not-pass}
Let $X\subset\PP$ be a smooth well formed weighted complete intersection.
Then $X$ does not pass through singular points of $\PP$.
\end{proposition}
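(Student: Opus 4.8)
The plan is to argue by contradiction: suppose $X$ contains a point $P\in\Sing\P$. By Lemma~\ref{lemma:singularities-of-P}, $P$ lies on a stratum $\Lambda_J$ with $m:=\gcd_{j\in J}a_j>1$. Choosing $i\in J$, working in the chart $\{x_i\ne 0\}\cong\A^n/\mu_{a_i}$, and restricting to a small $\mu_m$-invariant neighbourhood of a lift $\tilde P$ of $P$ recentred at $\tilde P$, one identifies a neighbourhood of $P$ in $\P$ with $\C^n/\mu_m$ for a diagonal linear action of $\mu_m$, and the matching neighbourhood of $P$ in $X$ with $W/\mu_m$, where $W=\{\hat f_1=\dots=\hat f_c=0\}\subset\C^n$ consists of the chart-restrictions of the defining equations of $X$. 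Three features will be used: each $\hat f_k$ is a $\mu_m$-eigenfunction of weight $d_k$; $W$ is a complete intersection of dimension $n-c=\dim X$, since the affine cone over $X$ is one; and, because $P\in\Sing\P$, the group $\mu_m$ is \emph{not} generated by quasi-reflections on $\C^n$.

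The easy half of the argument is as follows. Evaluating the weighted Euler relation $\sum_j a_j x_j\,\partial f_k/\partial x_j=d_k f_k$ at $\tilde P$, which lies on the cone, one sees that the component of $df_k|_{\tilde P}$ along the chart direction $x_i$ is a fixed linear combination of the remaining components; hence the linear parts of $\hat f_1,\dots,\hat f_c$ at the origin are linearly independent precisely when $df_1|_{\tilde P},\dots,df_c|_{\tilde P}$ are, that is, precisely when the cone is smooth at $\tilde P$. So \emph{$W$ is smooth at the origin if and only if $X$ is quasi-smooth near $P$}. Consequently, if one shows that $W$ is smooth at the origin, then $X$ is quasi-smooth in a neighbourhood of $P$; since $X$ is well formed, the local nature of Proposition~\ref{proposition:singularities-of-X} (or a direct inspection of $X$ near $P$) then gives $P\in\Sing X$, contradicting the smoothness of $X$. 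The whole statement thus reduces to showing that $W$ is smooth at the origin --- equivalently, that a smooth weighted complete intersection is quasi-smooth along the preimage of $\Sing\P$.

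This last point --- that if $\mu_m$ is not a quasi-reflection group on $\C^n$ and $W$ is a $\mu_m$-stable complete intersection, cut out by eigen-equations, whose quotient $W/\mu_m$ is smooth at the origin, then $W$ is itself smooth there --- is the step I expect to be the main obstacle. The natural approach is to use the surjection $\C[[v]]^{\mu_m}\twoheadrightarrow\widehat\O_{W,0}^{\mu_m}=\widehat\O_{X,P}$ (valid in characteristic $0$), whose kernel is the single ideal $\sum_k M_{-d_k}\hat f_k$, where $M_w\subset\C[[v]]$ is the weight-$w$ isotypic component regarded as a module over $R_0:=\C[[v]]^{\mu_m}$. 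Since $\mu_m$ is not a quasi-reflection group, $R_0$ is singular at the origin, so $\widehat\O_{X,P}$ can be regular of dimension $n-c$ only if this kernel supplies enough elements with independent linear parts in $R_0$; one must show that when $W$ is singular --- so that, after an equivariant change of coordinates combining equations of equal weight, some $\hat f_k$ lies in $\mathfrak m^2$ --- the eigen-structure of the $\hat f_k$ forbids this, forcing the embedding dimension of $\widehat\O_{X,P}$ to exceed $n-c$. Making this precise requires a careful comparison of the $\mathfrak m$-adic order on $\C[[v]]$ with the $\mathfrak m_{R_0}$-adic order on $R_0$ (it is essentially immediate when $R_0$ is generated in a single degree, e.g.\ for the action $\tfrac1m(1,\dots,1)$, and the general case is a book-keeping problem around this model), and one may also bring in that $\widehat\O_{W,0}$, being Cohen--Macaulay of dimension $\dim\widehat\O_{X,P}$, is a free $\widehat\O_{X,P}$-module by miracle flatness; this is where I expect the genuine work to lie.
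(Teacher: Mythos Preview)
Your reduction is sound up to the point where you declare the ``main obstacle'': showing that $W$ is smooth at the origin whenever $W/\mu_m$ is. That step is not merely technically incomplete---as stated it is \emph{false}, because your sketch never uses the well-formedness of $X$ (only that of $\PP$, via the quasi-reflection remark). For a concrete failure, take $X=\{x_0^3x_2-x_1^3=0\}\subset\PP(1,2,3)$ and $P=[0:0:1]$: in the $\mu_3$-chart with weights $(1,2)$ one has $\hat f=x_0^3-x_1^3$, whose three branches are cyclically permuted by $\mu_3$, so $W$ is singular while $W/\mu_3\cong\C$ is smooth. Here $X$ is smooth but \emph{not} well formed, and $\mu_3$ is not a quasi-reflection group on $\C^2$; every hypothesis in your boxed claim is met. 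Thus the embedding-dimension/isotypic bookkeeping you propose cannot succeed without injecting the codimension-$\ge 2$ condition on $X\cap\Sing\PP$, and your outline gives no mechanism for doing so.

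The paper proceeds quite differently and never tries to prove that $W$ is smooth. Writing $\tilde V\to V$ for the local $\mu_r$-cover (your $W\to W/\mu_m$), well-formedness of $X$ says the cover is \'etale outside a codimension-$\ge 2$ locus, so $\tilde V$ is a complete intersection smooth in codimension~$1$, hence Cohen--Macaulay. Since $V$ is smooth of dimension $>1$, a punctured neighbourhood of $P$ in $V$ is simply connected, forcing the \'etale cover over it to split into $r$ sheets; thus $\tilde V$ has $r>1$ local irreducible components through $\pi^{-1}(P)$. By Cohen--Macaulayness two of these meet in codimension~$1$, and along that locus $\tilde V$ is singular; but the map is \'etale there, so $V$ itself is singular---contradiction. (Equivalently, and closer in spirit to your miracle-flatness remark: $\tilde V$ is normal, so purity of the branch locus forces the branch divisor to have codimension~$1$ in $V$, contradicting well-formedness.) Either route uses well-formedness of $X$ at exactly the point your sketch omits it; if you want to salvage your approach, that is where the missing input must go, and the cleanest way is simply to pass through normality and purity rather than the isotypic bookkeeping.
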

\begin{proof}
Suppose that $X$ contains a singular point $P$ of $\PP$.
Let $U\subset\PP$ be an affine neighborhood of $P$,
and $\pi\colon\tilde{U}\to U$ be its natural
finite cover (see \cite[5.3]{IF00}), so that $\tilde{U}$ is isomorphic to
an open subset of~$\mathbb{A}^n$, and $\pi$ is a quotient
by a group $\ZZ/r\ZZ$ for some~\mbox{$r>1$}. Put $V=X\cap U$.
Let $\Sigma$ be the singular locus of $U$. Since $X$ is well formed,
the intersection of $\Sigma$ with $V$ has codimension at least $2$ in $V$.
Let $\tilde{V}$ be the preimage of $V$ with respect to~$\pi$, and
let $\pi_V\colon \tilde{V}\to V$ be the corresponding finite cover.
Then $\tilde{V}$ is a complete
intersection in~$U$.
Note also that $\pi_V$ is \'etale outside of $\Sigma$, and thus
$\tilde{V}$ is smooth in codimension~$1$.
In particular, $\tilde{V}$ is Cohen--Macaulay, see~\cite[\S18.5]{Eisenbud}.

Since $V$ is smooth by assumption, we can choose a simply connected analytic neighborhood $\mathcal{V}$ of $P$ in $V$.
Let $\tilde{\mathcal{V}}$
be its preimage with respect to $\pi_V$.
Put $\mathcal{V}^o=\mathcal{V}\setminus\Sigma$.
Since the intersection of $\Sigma$ with $V$ has codimension at least $2$ in $V$,
the open set $\mathcal{V}^o$ is also simply connected.
Let $\tilde{\mathcal{V}}^o$ be the preimage of
$\mathcal{V}^o$ with respect to $\pi_V$.
The morphism~$\pi_V$ is \'etale
over $\mathcal{V}^o$,
so that the complex space~$\tilde{\mathcal{V}}^o$ splits into a union
of its connected components~\mbox{$\tilde{\mathcal{V}}^o_1,\ldots,\tilde{\mathcal{V}}^o_r$, $r\ge 2$}.
Let $\tilde{\mathcal{V}}_1,\ldots,\tilde{\mathcal{V}}_r$
be the closures of $\tilde{\mathcal{V}}^o_1,\ldots,\tilde{\mathcal{V}}^o_r$ in $\tilde{\mathcal{V}}$.
Then~\mbox{$\tilde{\mathcal{V}}_1,\ldots,\tilde{\mathcal{V}}_r$} intersect each other
(in particular) at the point $\pi^{-1}(P)$,
so that~$\tilde{\mathcal{V}}$ is connected.
Since $\tilde{V}$ is Cohen--Macaulay, it follows from~\cite[Theorem~18.12]{Eisenbud}
that there is an index $2\le k\le r$ such that
the intersection $\mathcal{Z}=\tilde{\mathcal{V}}_1\cap\tilde{\mathcal{V}}_k$
has codimension $1$ in $\tilde{\mathcal{V}}_1$.
This means that the variety~$\tilde{\mathcal{V}}$, and thus also~$\tilde{V}$, is singular at the points of~$\mathcal{Z}$.
Since $\pi_V$ is \'etale at a general point of $\mathcal{Z}$, we conclude that
$V$ is singular at a general point of $\pi_V(\mathcal{Z})$, which is a contradiction.
\end{proof}

\begin{remark}
A.\,Kuznetsov pointed out that there is an alternative proof of
Proposition~\ref{proposition:smooth-does-not-pass} that is purely algebraic
and does not depend on the base field. Namely, in the notation of
our proof of Proposition~\ref{proposition:smooth-does-not-pass}
the variety $\tilde{V}$ is normal (since it is a locally complete intersection
smooth in codimension~$1$). Since $V$ is smooth by assumption,
the branch locus $R$ of $\pi_V$ has codimension $1$ in $V$ by the purity of the branch locus,
see~\mbox{\cite[Theorem~1.4]{Auslander}}.
Now we can obtain a contradiction as above.
Still we prefer to keep the original proof of Proposition~\ref{proposition:smooth-does-not-pass},
since we believe that it makes the geometric reason explaining why this property
holds more transparent, and our base field is~$\CC$ anyway.
\end{remark}

\begin{remark}
The assertion of Proposition~\ref{proposition:smooth-does-not-pass}
fails without the assumption that $X$ is well formed. Indeed, suppose that $a_0=a_1=1$
and $a_2=\ldots=a_n=2$; put $c=1$ and~\mbox{$d_1=2$}. Then $X$ is not well formed, but
it is smooth since it is isomorphic to~$\P^{n-1}$. However, it passes through
singular points of~$\P$. For $n=2$ this example gives a line on a usual quadratic cone.
\end{remark}

As an application of Proposition~\ref{proposition:smooth-does-not-pass}
one can show that being smooth is a stronger condition than being quasi-smooth,
provided that we work with well formed weighted complete intersections.

\begin{corollary}\label{corollary:smooth-vs-quasismooth}
Let $X\subset\PP$ be a smooth well formed weighted complete intersection.
Then $X$ is quasi-smooth.
\end{corollary}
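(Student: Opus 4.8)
The plan is to deduce quasi-smoothness of $X$ from Proposition~\ref{proposition:smooth-does-not-pass} together with Theorem~\ref{theorem:well-formed} and Proposition~\ref{proposition:singularities-of-X}. First I would recall the local structure of $\PP$: away from $\Sing\,\PP$ the quotient map $p\colon\Aff^{n+1}\setminus\{0\}\to\PP$ is locally an honest $\C^*$-quotient, so over the smooth locus of $\PP$ there is no distinction between smoothness and quasi-smoothness of a subvariety. More precisely, if $X$ misses $\Sing\,\PP$, then near any point of $X$ the affine cone $p^{-1}(X)$ is, up to the free $\C^*$-action, a product of $X$ with a smooth curve direction, so $p^{-1}(X)\setminus\{0\}$ is smooth exactly where $X$ is smooth.

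The key step is therefore to combine Proposition~\ref{proposition:smooth-does-not-pass}, which tells us that a smooth well formed $X$ avoids $\Sing\,\PP$ entirely, with the above local observation to conclude that $p^{-1}(X)\setminus\{0\}$ is smooth, i.e. $X$ is quasi-smooth. Since $0\notin p^{-1}(X)\setminus\{0\}$ by definition and the cone is a complete intersection in $\Aff^{n+1}$ cut out by the defining regular sequence, quasi-smoothness in the sense of Definition~\ref{definition:quasi-smooth} follows once smoothness of the punctured cone is established. One subtlety to address is the low-dimensional case: Proposition~\ref{proposition:smooth-does-not-pass} is stated for well formed weighted complete intersections without a dimension restriction, so it applies directly, but if one wanted to route through Theorem~\ref{theorem:well-formed} (dimension $\ge 3$) instead one would need a separate argument for $\dim X\le 2$. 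I would simply invoke Proposition~\ref{proposition:smooth-does-not-pass} directly to avoid this.

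The main obstacle, such as it is, is making precise the claim that over $\PP\setminus\Sing\,\PP$ smoothness of $X$ is equivalent to smoothness of $p^{-1}(X)$ at the corresponding points. This is a standard fact about the geometric quotient $p$ on the locus where the $\C^*$-action is almost free (stabilizers act trivially on the smooth locus in the well formed case), but it should be stated carefully: locally analytically $p$ looks like $(\Aff^{n+1}\setminus\{0\})\to\PP$ being a Seifert $\C^*$-bundle which is a genuine $\C^*$-bundle away from the singularities, and pullback under a smooth submersion preserves and reflects smoothness. I expect the write-up to be short, citing the local normal form of $p$ from \cite[\S5]{IF00} and then chaining Proposition~\ref{proposition:smooth-does-not-pass} with this observation.

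\begin{proof}
By Proposition~\ref{proposition:smooth-does-not-pass} the variety $X$ does not meet the singular locus of $\PP$. Over the smooth locus $\PP\setminus\Sing\,\PP$ the projection $p\colon\Aff^{n+1}\setminus\{0\}\to\PP$ is (locally in the analytic topology) a $\C^*$-bundle; indeed, by Lemma~\ref{lemma:singularities-of-P} the strata $\Lambda_J$ with nontrivial stabilizer are exactly the components of $\Sing\,\PP$, so outside them $p$ is a submersion with smooth fibres. Hence $p$ restricted to $p^{-1}(X)\setminus\{0\}=p^{-1}(X)$ is a submersion onto $X$, and since $X$ is smooth, the total space $p^{-1}(X)\setminus\{0\}$ is smooth as well. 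Therefore $X$ is quasi-smooth in the sense of Definition~\ref{definition:quasi-smooth}.
\end{proof}
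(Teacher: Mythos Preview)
Your proof is correct and follows essentially the same approach as the paper: invoke Proposition~\ref{proposition:smooth-does-not-pass} to place $X$ inside the smooth locus of $\PP$, observe that $p$ is a locally trivial $\C^*$-bundle there, and conclude that $p^{-1}(X)$ is smooth. The paper's write-up is slightly terser but the argument is the same.
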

\begin{proof}
The morphism $p\colon \Aff^{n+1}\setminus\{0\}\to \PP$
is a locally trivial $\CC^*$-bundle over the non-singular part $U$ of $\PP$,
while $X$ is contained in $U$ by Proposition~\ref{proposition:smooth-does-not-pass}.
Hence $p$ is a locally trivial $\CC^*$-bundle over $X$, and thus the preimage of $X$
with respect to $p$ is smooth, which means that $X$ itself is quasi-smooth.
\end{proof}

If the weighted projective space $\PP$ is not well formed,
then the assertion of Corollary~\ref{corollary:smooth-vs-quasismooth}
may fail. Thus, the curve $X$ in $\PP=\PP(1,2,2)$
given by equation
$$
x_0^2x_1+x_2^2=0
$$
is not quasi-smooth because the cone over this curve
in $\mathbb{A}^3$ is singular, for instance, at the point $(0,1,0)$.
On the other hand, $X$ is
isomorphic to a conic given by the equation
$$
z_0z_1+z_2^2=0
$$
in $\PP^2\cong \PP(1,2,2)$, and thus it is smooth.
However, we do not know whether there exists an example of a
smooth (but not well formed)
weighted complete intersection $X$ in a well formed
weighted projective space~$\PP$ such that $X$ is not quasi-smooth.

Another consequence of Proposition~\ref{proposition:smooth-does-not-pass}
is the following result.

\begin{lemma}[{cf.~\cite[Proposition~4.1]{ChenChenChen}}]
\label{lemma:smooth-condition}
Let $X\subset\PP$ be a smooth well formed weighted complete intersection
of multidegree $(d_1,\ldots,d_c)$.
Then for every $k$ and every choice of~$k$ weights~\mbox{$a_{i_1},\ldots,a_{i_k}$, $i_1<\ldots<i_k$},
such that their greatest common divisor $\delta$ is greater than~$1$
there exist $k$ degrees $d_{s_1},\ldots,d_{s_k}$, $s_1<\ldots<s_k$,
such that their greatest common divisor is divisible by~$\delta$.
\end{lemma}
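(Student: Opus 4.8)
The plan is to argue by contradiction, along the lines of the proof of Proposition~\ref{proposition:smooth-does-not-pass} combined with Lemma~\ref{lemma:singularities-of-P}. Suppose that there exist weights $a_{i_1},\ldots,a_{i_k}$ with $\gcd(a_{i_1},\ldots,a_{i_k})=\delta>1$ such that no $k$ of the degrees $d_1,\ldots,d_c$ have greatest common divisor divisible by $\delta$. Set $J=\{i_1,\ldots,i_k\}$ and consider the stratum $\Lambda_J=\{x_j=0 \text{ for } j\notin J\}\subset\P$, which by Lemma~\ref{lemma:singularities-of-P} lies in $\Sing\P$ since $\gcd_{j\in J}a_j$ is divisible by $\delta>1$; note $\dim\Lambda_J=k-1$. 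First I would analyze how the defining equations $f_1,\ldots,f_c$ of $X$ restrict to $\Lambda_J$. A monomial of degree $d_s$ in the variables $x_j$, $j\in J$, only, must have its exponents weighted-summing to $d_s$ using weights all divisible by $\delta$, hence is only possible if $\delta\mid d_s$. So $f_s|_{\Lambda_J}\equiv 0$ unless $\delta\mid d_s$; and even when $\delta\mid d_s$ the restriction $f_s|_{\Lambda_J}$ is a weighted form of degree $d_s$ on $\Lambda_J\cong\P(a_{i_1},\ldots,a_{i_k})$ (up to the well-formedness normalization).

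The key step is then a dimension count on $X\cap\Lambda_J$. Let $m$ be the number of indices $s$ with $\delta\mid d_s$; by assumption $m<k$ is impossible to ``fix'' — more precisely, our hypothesis says we cannot find $k$ degrees whose gcd is divisible by $\delta$, which forces $m<k$. Then $X\cap\Lambda_J$ is cut out inside $\Lambda_J$ by at most $m$ nonzero equations (the $f_s$ with $\delta\mid d_s$), so $\dim(X\cap\Lambda_J)\ge (k-1)-m\ge (k-1)-(k-1)=0$, i.e. it is nonempty; in fact $\dim(X\cap\Lambda_J)\ge k-1-m$. The point is simply that $X\cap\Lambda_J\neq\emptyset$. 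But $\Lambda_J\subset\Sing\P$, so $X$ meets $\Sing\P$, contradicting Proposition~\ref{proposition:smooth-does-not-pass}. I should be slightly careful that $X\cap\Lambda_J$ is genuinely nonempty and not empty for a trivial reason (e.g. an empty intersection in a weighted projective space because of the phenomenon in Remark~\ref{remark:WCI-warning}); but here the relevant ambient space is the honest projective-type variety $\Lambda_J$ and we are imposing fewer equations than its dimension, so a nonempty intersection in $\overline{\Lambda_J}$ is guaranteed by the projective dimension theorem, and since $\Lambda_J$ is a closed subvariety of $\P$ this suffices.

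The main obstacle I anticipate is bookkeeping about \emph{well-formedness}: after restricting to $\Lambda_J$ one is working in $\P(a_{i_1},\ldots,a_{i_k})$, which need not be well formed, and one must make sure the restricted equations are not identically zero for a spurious reason and that the stratum $\Lambda_J$ really is a positive-dimensional component of $\Sing\P$ rather than something already excised. This is handled by noting that Lemma~\ref{lemma:singularities-of-P} gives $\Lambda_J\subset\Sing\P$ on the nose for \emph{any} $J$ with $\gcd_{j\in J}a_j>1$ — and $\delta>1$ guarantees this — with no well-formedness needed for $\Lambda_J$ itself. A second, smaller subtlety is the reduction ``at most $m$ equations carve out $X\cap\Lambda_J$'': this is immediate since the $f_s$ with $\delta\nmid d_s$ vanish on all of $\Lambda_J$ and hence impose no condition. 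With these points in place the contradiction with Proposition~\ref{proposition:smooth-does-not-pass} closes the argument.
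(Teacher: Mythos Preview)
Your proposal is correct and follows essentially the same approach as the paper: assume the number $m$ of degrees divisible by $\delta$ is less than $k$, restrict to the coordinate stratum $\Lambda_J$ where only the variables $x_{i_1},\ldots,x_{i_k}$ survive, observe that the equations $f_s$ with $\delta\nmid d_s$ vanish identically there, and conclude by a dimension count that $X\cap\Lambda_J\neq\emptyset$, contradicting Proposition~\ref{proposition:smooth-does-not-pass} since $\Lambda_J\subset\Sing\P$. Your write-up is in fact somewhat cleaner than the paper's, which has a notational slip (it lets $J$ index the \emph{degrees} divisible by $\delta$ but then writes ``$x_j=0$ for $j\in J$''); the intended argument is exactly what you wrote.
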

\begin{proof}
Choose a positive integer $k$, and suppose that there are $k$
weights $a_{i_1},\ldots,a_{i_k}$ with~\mbox{$i_1<\ldots<i_k$},
such that their greatest common divisor $\delta$ is greater than $1$. Let $t$
be the number of degrees~$d_j$ that are divisible by $\delta$. Suppose
that $t<k$. We claim that in this case $X$ is singular. Indeed,
let $f_1=\ldots=f_c=0$ be the equations of $X$ in $\P$, so that~\mbox{$\deg(f_j)=d_j$}. Let $J$ be the set of indices $j$ such that $d_j$ is divisible by $\delta$,
and let $\Lambda$ be the subvariety in $\P$ given by equations $x_j=0$ for $j\in J$. Then for any~\mbox{$j'\not\in J$} the polynomial~$f_{j'}$ does not contain monomials that depend only on
$x_j$ with~\mbox{$j\in J$}. On the other hand, the equations $f_j=0$ for $j\in J$ cut out a non-empty
subset of~$\Lambda$. Since~\mbox{$\delta>1$}, the weighted projective space
$\P$ is singular along $\Lambda$, see Lemma~\ref{lemma:singularities-of-P}.
This gives a contradiction with Proposition~\ref{proposition:smooth-does-not-pass}.
\end{proof}

\begin{remark}
The condition provided by Lemma~\ref{lemma:smooth-condition} is only necessary for
the weighted complete intersection $X$ to be smooth, but not sufficient. For example, assume
that~\mbox{$a_0=\ldots=a_r=1$}, while~\mbox{$2<a_{r+1}\le\ldots\le a_n$}, and
$a_j$ are pairwise coprime. Choose~$d_1$ and~$d_2$
so that~$d_2$ is divisible by all $a_j$, and~\mbox{$2\le d_1<a_{r+1}$}.
Then a general weighted complete intersection $X$ of multidegree~\mbox{$(d_1,d_2)$} in
$\P$ is not smooth provided that $r\le n-2$; moreover, it is reducible if~\mbox{$r=1$},
and non-reduced if~\mbox{$r=0$}. Another way how smoothness may fail is illustrated by
an example of a weighted complete intersection $X$ of multidegree~\mbox{$(2,30)$} in
$\P$ when $a_0=\ldots=a_{n-2}=1$, $a_{n-1}=6$, and $a_n=10$; in this case we see
that the assertion of Proposition~\ref{proposition:smooth-does-not-pass} does not hold,
so that $X$ is singular. See also Lemma~\ref{lemma:d-2a}(i) below.
\end{remark}

\section{Weight bound}
\label{section:bound}

In this section we derive Theorem~\ref{theorem:main} from elementary
results of Appendix~\ref{section:optimization}. The method we use here is somewhat
similar to~\cite[\S3]{CheltsovShramov-DelPezzoZoo}.

Let $X\subset\P=\P(a_0,\ldots,a_n)$, $n\ge 2$, be a smooth well formed weighted complete intersection
of multidegree $(d_1,\ldots,d_c)$ that is not an intersection with a linear cone.
We can assume that $X$ is \emph{normalized}, i.e. that inequalities
$a_0\le\ldots\le a_n$ and $d_1\le\ldots\le d_c$ hold.
Moreover,  if $c=0$, then one has $X\cong\P\cong\P^n$, and there is nothing to prove; therefore, we will always assume that $c\ge 1$.

We need an auxiliary result that is easy to establish and well known to experts.

\begin{lemma}
\label{lemma:d-2a}
Let $X\subset\P=\P(a_0,\ldots,a_n)$, $n\ge 2$, be a smooth well formed normalized weighted complete intersection of multidegree $(d_1,\ldots,d_c)$ that is not an intersection with a linear cone.
Then the following assertions hold.
\begin{itemize}
\item[(i)]
One has $d_{c-k}> a_{n-k}$ for all $1\le k\le c-1$.

\item[(ii)]
One has $d_c\ge 2a_n$.

\item[(iii)]
The integer $a_0\cdot\ldots\cdot a_n$ divides the integer $d_1\cdot\ldots\cdot d_c$.
\end{itemize}
\end{lemma}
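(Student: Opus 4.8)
The plan is to deduce each part from the results already available, chiefly Proposition~\ref{proposition:smooth-does-not-pass}, Lemma~\ref{lemma:smooth-condition}, and the non-existence constraints on weighted complete intersections noted in Remark~\ref{remark:WCI-warning}. For part~(i), I would argue by contradiction: suppose $d_{c-k}\le a_{n-k}$ for some $1\le k\le c-1$. Since $X$ is normalized, this forces $d_j\le a_{n-k}\le a_{n-k+1}\le\ldots\le a_n$ for all $j\le c-k$, i.e. the $c-k$ smallest degrees are each at most the $k+1$ largest weights. Equivalently, each of the degrees $d_1,\ldots,d_{c-k}$ is strictly smaller than all of the weights $a_0,\ldots,a_{n-k-1}$ (here using $d_j\ne a_i$ to upgrade $\le$ to $<$ where needed), so that a defining polynomial $F_j$ of degree $d_j$ can only involve the variables $x_{n-k},\ldots,x_n$ — there are $k+1$ of these. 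Thus the $c-k$ equations $F_1=\ldots=F_{c-k}=0$ live on the linear subspace $\Lambda=\{x_0=\ldots=x_{n-k-1}=0\}$; after removing the other $F_j$ as well, we would need $X\cap\Lambda$ to be nonempty (since everything is cut out inside $\P$ and the $F_j$ for $j>c-k$ impose only $k$ further conditions on a space of dimension $k$), while $\Lambda$ is contained in the singular locus of $\P$ as soon as… — and this is exactly where one has to be careful, because $\Lambda$ need not lie in $\Sing\,\P$. The cleaner route: the equations $F_1,\dots,F_{c-k}$ restricted to $\Lambda\cong\P(a_{n-k},\dots,a_n)$ are identically zero, so $X\supseteq (\text{something of dimension}\ge k - k = 0$ cut out on $\Lambda$ by $F_{c-k+1},\dots,F_c)$, and a dimension count shows $X$ must meet $\Lambda$; but then the regular sequence condition fails, because the ideal of $X$ cannot be generated by a regular sequence of length $c$ when $c-k$ of the generators vanish on a subvariety that the remaining ones cut down to something nonempty of dimension $<\dim X$. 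I expect this bookkeeping — making the ``intersection with a linear cone'' exclusion do its job and getting a genuine contradiction rather than merely a degenerate configuration — to be the main obstacle, and I would handle it by invoking that $X$ is not an intersection with a linear cone to reduce, if necessary, to a lower-dimensional weighted projective space (Remark~\ref{remark:linear}) and iterate, until one lands in the impossible regime $d<\min a_i$ of Remark~\ref{remark:WCI-warning}.

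Part~(ii) is the case $k$ ``$=0$'' of the phenomenon in part~(i), but it needs a separate small argument because $k=c-1$ is the largest index allowed there. First, $d_c\ge a_n$ is immediate, and $d_c\ne a_n$ since $X$ is not an intersection with a linear cone, so $d_c>a_n$. To upgrade $d_c>a_n$ to $d_c\ge 2a_n$, I would suppose $a_n<d_c<2a_n$ and derive a contradiction with smoothness via Lemma~\ref{lemma:smooth-condition} or directly via Proposition~\ref{proposition:smooth-does-not-pass}: if $a_n$ is the unique largest weight (or more precisely, if $x_n$ appears with multiplicity one among the weights) then no defining polynomial $F_j$ of degree $d_j<2a_n$ with $d_j\ne a_n$ can contain $x_n$ at all — the only monomials of degree in the open interval $(a_n,2a_n)$ divisible by $x_n$ would need a second factor of degree in $(0,a_n)$, and such a factor is a single variable of weight $<a_n$, but then $d_j=a_n+a_i$; so one has to rule out $d_j=a_n+a_i$ being the only way $x_n$ enters, and when that happens one shows $X$ is forced to be singular at the coordinate point $P_n=(0:\dots:0:1)$, or that $P_n$ is a singular point of $\P$ that $X$ passes through, contradicting Proposition~\ref{proposition:smooth-does-not-pass}. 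The case where several weights equal $a_n$ is handled by Lemma~\ref{lemma:smooth-condition} applied to $\delta=a_n$ (if $a_n>1$), forcing enough degrees divisible by $a_n$, hence $\ge a_n$ and distinct from $a_n$, hence $\ge 2a_n$; and if $a_n=1$ the statement is trivial since $X\cong\P^n$ is the only option, already excluded.

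Part~(iii) is the most self-contained. By Lemma~\ref{lemma:smooth-condition}, for every prime power $q=p^m$ dividing some weight, the number of degrees divisible by $q$ is at least the number of weights divisible by $q$; applying this with $q$ running over all prime powers gives, for each prime $p$, the inequality $\sum_i v_p(a_i)\le\sum_j v_p(d_j)$ on $p$-adic valuations (summing $\#\{i: p^m\mid a_i\}\le\#\{j: p^m\mid d_j\}$ over $m\ge 1$, which telescopes to exactly the valuation sums). Since this holds for every prime $p$, the product $\prod a_i$ divides $\prod d_j$. The one point requiring a line of justification is the telescoping identity $v_p(a)=\sum_{m\ge1}[\,p^m\mid a\,]$, which is standard. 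I anticipate no real obstacle in part~(iii); the crux of the whole lemma is the careful contradiction in part~(i), and once that is in place parts~(ii) and~(iii) follow with modest extra work.
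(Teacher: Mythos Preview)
Your argument for part~(iii) is correct and is exactly what the paper does: apply Lemma~\ref{lemma:smooth-condition} to each prime power~$p^m$, sum over~$m$ to recover the $p$-adic valuation, and conclude divisibility prime by prime.

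For part~(ii) you reach the right conclusion but by a detour. The clean argument, which the paper uses, is this: assuming $a_n>1$ and $d_c<2a_n$, no $f_j$ can contain a monomial $x_n^r$ with $r\ge 2$ (degree too large) or with $r=1$ (that would force $d_j=a_n$, a linear cone). Hence every $f_j$ vanishes at the coordinate point $P=(0:\ldots:0:1)$, since every monomial surviving at $P$ is a pure power of $x_n$. Thus $X\ni P\in\Sing\P$, contradicting Proposition~\ref{proposition:smooth-does-not-pass}. Your worry about mixed monomials $x_nx_i$ is unnecessary: they vanish at $P$ regardless.

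Part~(i), however, contains a genuine error. From $d_{c-k}\le a_{n-k}$ and normalization you correctly get $d_j\le a_{n-k}\le a_{n-k+1}\le\ldots\le a_n$ for $j\le c-k$, and the linear-cone exclusion upgrades this to $d_j<a_{n-k}$. But your next claim, that ``each of the degrees $d_1,\ldots,d_{c-k}$ is strictly smaller than all of the weights $a_0,\ldots,a_{n-k-1}$'', has the indices reversed: the weights $a_0,\ldots,a_{n-k-1}$ are the \emph{small} ones, and there is no reason $d_j$ should lie below them. What actually follows is the opposite containment: since $d_j<a_{n-k}\le a_i$ for every $i\ge n-k$, the polynomial $F_j$ cannot involve any of $x_{n-k},\ldots,x_n$; it lives in $\CC[x_0,\ldots,x_{n-k-1}]$, not in $\CC[x_{n-k},\ldots,x_n]$ as you wrote. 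With this correction your subsequent dimension count and the identification of the stratum $\Lambda$ have to be redone from scratch, and the contradiction you sketch (via $\Lambda\subset\Sing\P$) no longer applies, since the low-weight coordinate stratum is typically smooth in $\P$. The paper sidesteps all of this by citing \cite[Lemma~18.14]{IF00}, whose proof is a quasi-smoothness argument on the affine cone rather than a singular-locus argument; if you want a self-contained proof, that is the route to follow.
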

\begin{proof}
Assertion~(i) is given by \cite[Lemma~18.14]{IF00} and holds under a weaker assumption
of quasi-smoothness.
If $a_n=1$, then the remaining assertions of the lemma obviously hold, and thus we can
assume that $a_n>1$.

Let $x_0,\ldots,x_n$ be homogeneous coordinates on $\P$ of weights
$a_0,\ldots,a_n$, respectively. Let~\mbox{$f_1=\ldots=f_c=0$} be the equations of $X$ in $\P$, so that~\mbox{$\deg(f_j)=d_j$}.

Suppose that $d_c<2a_n$.
Then none of $f_j$ contains a monomial $x_n^r$ with non-zero coefficient
if $r\ge 2$. Also, since $X$ is not an intersection with a linear cone, none
of $f_j$ contains a monomial $x_n$ with non-zero coefficient
either. Therefore, we see that every $f_j$ vanishes at the point $P$
given by $x_0=\ldots=x_{n-1}=0$, so that $X$ passes through $P$. On the other hand,
$P$ is a singular point of $\P$ by Lemma~\ref{lemma:singularities-of-P} because $a_n>1$.
Thus Proposition~\ref{proposition:singularities-of-X} implies that $X$ is singular at $P$,
which is a contradiction.
This gives assertion~(ii).

To prove assertion~(iii), choose a prime number $p$, and denote by $\nu_p^{(r)}(a_0,\ldots,a_n)$
the number of the weights~$a_i$ that are divisible by $p^r$.
Similarly, denote by $\nu_p^{(r)}(d_1,\ldots,d_c)$
the number of the degrees~$d_j$ that are divisible by $p^r$.
By Lemma~\ref{lemma:smooth-condition} for every $r$ one has
$$
\nu_p^{(r)}(a_0,\ldots,a_n)\le \nu_p^{(r)}(d_1,\ldots,d_c).
$$
This implies that the $p$-adic valuation of the integer
$a_0\cdot\ldots\cdot a_n$ does not exceed the $p$-adic valuation of the integer
$d_1\cdot\ldots\cdot d_c$.
Since this holds for an arbitrary prime $p$, we obtain assertion~(iii).
\end{proof}

Now we are ready to prove Theorem~\ref{theorem:main}.
Recall that it asserts the bounds $a_i\le n$ and~\mbox{$d_j\le n(n+1)$}
if $X$ is Fano, and the bounds~\mbox{$a_i\le n+1$},
and $d_j\le (n+1)^2$ if $X$ is Calabi--Yau.

\begin{proof}[Proof of Theorem~\ref{theorem:main}]
Put $N=n+1$. Denote $A_{i+1}=a_{n-i}$ for $0\le i\le n$,
and~\mbox{$D_j=d_{c-j+1}$} for $1\le j\le c$. Then one has $A_1\ge\ldots\ge A_N$ and $D_1\ge\ldots\ge D_c$. Moreover, by Lemma~\ref{lemma:d-2a}(i) one has $D_2> A_2, \ldots, D_c> A_c$.
By Lemma~\ref{lemma:d-2a}(ii) we also have $D_1\ge 2A_1$, and by Lemma~\ref{lemma:d-2a}(iii)
we have
$$
A_1\cdot\ldots\cdot A_N\le D_1\cdot\ldots\cdot D_c.
$$

Put
$$
L=a_0+\ldots+a_n-d_1-\ldots-d_c=A_1+\ldots+A_N-D_1-\ldots-D_c.
$$
Then $L>0$ provided that $X$ is Fano, and $L\ge 0$ provided that $X$ is Calabi--Yau.
This follows from Lemma~\ref{lemma:adjunction}.

Suppose that $X$ is a Fano variety. Then $N\ge 2c+1$ by~\cite[Theorem~1.3]{ChenChenChen}.
Therefore, Proposition~\ref{proposition:Lagrange} implies that
$A_1\le N-1$, which can be rewritten as $a_n\le n$.

Now suppose that $X$ is Calabi--Yau. Then a general weighted complete intersection of multidegree
$d_1,\ldots,d_c$ in $\P(1,a_0,\ldots,a_n)$ is a smooth well formed Fano weighted complete intersection
that is not an intersection with a linear cone. Thus one has~\mbox{$a_n\le n+1$}.

Since $X$ is normalized, we obtain similar inequalities for all other weights~$a_i$.
Finally, the inequalities for the degrees $d_j$ follow from the fact that
$L$ is positive if $X$ is Fano, and non-negative if $X$ is Calabi--Yau.
This completes the proof of Theorem~\ref{theorem:main}.
\end{proof}

\begin{remark}\label{remark:sharp}
The bounds for $a_n$ given by Theorem~\ref{theorem:main} are sharp for an infinite set of dimensions.
Indeed, if $n$ is odd, $a_0=\ldots=a_{n-2}=1$, $a_{n-1}=2$, and $a_n=n$, then a general hypersurface of weighted degree $2n$ in $\P$
is a smooth well formed Fano weighted complete intersection. Similarly, if $n$ is even, $a_0=\ldots=a_{n-2}=1$, $a_{n-1}=2$, and~\mbox{$a_n=n+1$},
then a general hypersurface of weighted degree $2n+2$ in $\P$
is a smooth well formed Calabi--Yau weighted complete intersection. However, we do not know if the bound for $a_n$ is attained for even $n$ in the case of Fano weighted
complete intersections, and if it is attained for odd $n$ in the case of Calabi--Yau weighted
complete intersections.
\end{remark}

Although Remark~\ref{remark:sharp} shows that the bound for the maximal weight~$a_n$ given by Theorem~\ref{theorem:main}
is more or less sharp, there are stronger bounds for some other weights~$a_i$ in certain cases.

\begin{lemma}\label{lemma:middle-bound}
Let $X\subset\P=\P(a_0,\ldots,a_n)$, $n\ge 2$, be a smooth well formed normalized weighted complete intersection of multidegree $(d_1,\ldots,d_c)$ that is not an intersection with a linear cone.
Suppose that $X$ is Fano or Calabi--Yau. Then for every $0\le k\le \dim X$
one has
$$
a_k< 2^{\frac{\dim X+1}{\dim X-k+1}}.
$$
Moreover, if $\dim X\ge 2$, then one has $a_0=a_1=1$.
\end{lemma}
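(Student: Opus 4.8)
The plan is to run the same machine as in the proof of Theorem~\ref{theorem:main}: record the inequalities forced by smoothness (Lemma~\ref{lemma:d-2a}) together with the Fano inequality, relax them to a statement about real numbers, and solve the resulting optimization by Lagrange multipliers, i.e.\ by the results collected in Appendix~\ref{section:optimization}. Put $m=\dim X=n-c$ and keep the reversed notation of that proof: $A_{i+1}=a_{n-i}$, $D_j=d_{c-j+1}$, $N=n+1$, so that $A_1\ge\cdots\ge A_N\ge 1$, $D_1\ge\cdots\ge D_c$, $D_1\ge 2A_1$, $D_j>A_j$ for $2\le j\le c$, $A_1\cdots A_N\le D_1\cdots D_c$, $A_1+\cdots+A_N>D_1+\cdots+D_c$, and $N\ge 2c+1$ by~\cite[Theorem~1.3]{ChenChenChen}. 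Note that $a_k=A_{n-k+1}$ and, since $1\le k\le m$, the index $n-k+1$ lies between $c+1$ and $n$; thus $a_k$ is one of the ``lower'' weights, those for which the constraints force no matching degree.

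I would reduce all the bounds on the $a_k$ to a single inequality. Because $X$ is normalized, $a_k\le a_{k+1}\le\cdots\le a_m$, whence
\[
a_k^{\,m-k+1}\ \le\ a_k a_{k+1}\cdots a_m\ \le\ a_0 a_1\cdots a_m ,
\]
so it suffices to prove $a_0 a_1\cdots a_m\le 2^m$; this yields $a_k\le 2^{m/(m-k+1)}$ for every admissible $k$ at once. To prove $a_0\cdots a_m\le 2^m$ I would start from Lemma~\ref{lemma:d-2a}(iii), which gives $a_0\cdots a_n\mid d_1\cdots d_c$, and from Lemma~\ref{lemma:d-2a}(i),(ii), which give $d_c\ge 2a_n$ and $d_{c-l}>a_{n-l}$ for $1\le l\le c-1$; dividing the divisibility relation by the integer $a_{m+1}\cdots a_n$ shows that $a_0\cdots a_m$ divides, hence is at most, the integer
\[
\frac{d_1\cdots d_c}{a_{m+1}\cdots a_n}\ =\ \frac{D_1}{A_1}\cdot\prod_{l=2}^{c}\frac{D_l}{A_l},
\]
whose factors satisfy $D_1/A_1\ge 2$ and $D_l/A_l>1$. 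The Fano inequality, rewritten as in the proof of Theorem~\ref{theorem:main} (it becomes $a_0+\cdots+a_m>\sum_{l=1}^c A_l(D_l/A_l-1)$, hence $\sum_l(D_l/A_l-1)<m+1$ after using $A_l\ge a_m$ and $a_0+\cdots+a_m\le(m+1)a_m$), controls how large these $c$ ratios can be simultaneously; optimizing their product under this constraint, the orderings of the $A_i$ and $D_j$, and $c\le m$ (which is $N\ge 2c+1$) is what produces the bound $2^m$. Carrying out this last optimization cleanly — in particular verifying that the relevant extremum is realized with all ordering constraints respected, since the crude arithmetic–geometric estimate is not by itself sharp enough — is the main obstacle, exactly as for Theorem~\ref{theorem:main}; I would isolate it as an auxiliary lemma in Appendix~\ref{section:optimization}.

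For the last assertion, specialize the inequality just proved to $k=1$: it gives $a_1\le 2^{m/m}=2$. It remains to exclude $a_1=2$. If $a_1=2$, then $a_i\ge 2$ for all $i\ge 1$, and equality must hold throughout the displayed chain with $k=1$, which forces $a_0=1$ and $a_1=\cdots=a_m=2$. Now the $m$ weights $a_1,\dots,a_m$ all equal $2$, so by Lemma~\ref{lemma:smooth-condition} (applied with $\delta=2$) at least $m$ of the degrees are divisible by $2$; since there are only $c$ of them and $c\le m$, this forces $c=m$ and all $d_j$ even, and a short computation with Lemma~\ref{lemma:d-2a}(i),(ii),(iii) and the Fano inequality then forces $a_{m+1}=\cdots=a_n=2$ as well (any larger weight among $a_{m+1},\dots,a_n$ makes $\sum d_j$ too large). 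But then $a_1,\dots,a_n$ are $n$ weights all divisible by $2$, contradicting the well-formedness of $\PP$ (Definition~\ref{definition:well-formed-P}). Hence $a_1=1$, and $a_0\le a_1$ gives $a_0=1$. I expect this last paragraph to need only a brief, if slightly delicate, computation; the real work is the optimization of the previous step.
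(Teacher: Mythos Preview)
Your reduction to proving $\prod_{i=0}^{m}a_i\le 2^{m}$ (with $m=\dim X$) is exactly right and is also what the paper proves, but the paper gets there by a completely different and much shorter route: it quotes the inequality \cite[(2.6)]{ChenChenChen}, which together with Lemma~\ref{lemma:d-2a}(iii) gives
\[
\prod_{i=0}^{m}a_i\ \le\ \left(\frac{c+m+1-I(X)}{c}\right)^{c}\ \le\ \left(1+\frac{m}{c}\right)^{c},
\]
and then uses only the elementary fact that $(1+m/c)^{c/m}\le 2$ for $c\le m$ (with equality iff $c=m$). No Lagrange multipliers are needed here at all; the whole weight bound is a two-line consequence of the AM--GM-type estimate from \cite{ChenChenChen}.

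Your proposed route via the ratios $r_l=D_l/A_l$ has a genuine weak point. Passing to the derived constraint $\sum_l(r_l-1)<m+1$ loses too much: under just that constraint, $r_1\ge 2$, $r_l>1$, and $c\le m$, the maximum of $\prod r_l$ is $\bigl((m+1+c)/c\bigr)^{c}$, and for $c=m$ this is $\bigl((2m+1)/m\bigr)^{m}>2^{m}$. So ``optimizing their product'' cannot yield $2^{m}$; the orderings of $A_i$ and $D_j$ do not translate into constraints on the $r_l$ alone. To salvage the optimization approach you would have to maximize $\prod_{i=c+1}^{N}A_i$ directly over the full constraint set of Proposition~\ref{proposition:Lagrange}, which is a new and substantially harder problem than anything in Appendix~\ref{section:optimization}. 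The paper sidesteps this entirely.

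For the final assertion $a_0=a_1=1$, your argument is essentially the same as the paper's, but your phrasing ``forces $a_{m+1}=\cdots=a_n=2$'' is off: what actually happens in the equality case $a_0=1$, $a_1=\cdots=a_m=2$, $c=m$ is that Lemma~\ref{lemma:smooth-condition} makes all $d_j$ even and all $a_i$ with $i>m$ \emph{odd}, hence $\ge 3$; then parity and Lemma~\ref{lemma:d-2a}(i),(ii) force $d_{c-k}\ge 2a_{n-k}$, whence $\sum d_j\ge\sum a_i$ and Fano fails. The contradiction comes from the Fano inequality, not from well-formedness.
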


\begin{proof}
For the first assertion, we mostly follow the proof of \cite[(2.6)]{ChenChenChen}.
Using Lemmas~\ref{lemma:adjunction}
and~\ref{lemma:d-2a}(i), one
gets
\begin{multline}\label{eq:Chens}
(\dim X+1)a_{\dim X+1}\ge(\dim X+1)a_{\dim X+1}-I(X)\ge\\
\ge a_0+\ldots+a_{\dim X}-I(X)=\sum_{j=1}^c\left(\frac{d_j}{a_{\dim X+j}}-1\right)a_{\dim X+j}\ge\\
\ge \left(\sum_{j=1}^c\frac{d_j}{a_{\dim X+j}}-c\right) a_{\dim X+1},
\end{multline}
so that
$$
\dim X+c+1\ge \sum_{j=1}^c\frac{d_j}{a_{\dim X+j}}.
$$
Thus
\begin{multline*}
\left(1+\frac{\dim X+1}{c}\right)^c\ge\left(\frac{\sum_{j=1}^c\frac{d_j}{a_{\dim X+j}}}{c}\right)^c\ge\\
\ge\frac{\prod_{j=1}^c d_j}{\prod_{j=\dim X+1}^n a_j}=\prod_{i=0}^{\dim X} a_i\cdot \frac{\prod_{j=1}^c d_j}{\prod_{s=0}^n a_s}\ge \prod_{i=0}^{\dim X} a_i
\end{multline*}
by Lemma~\ref{lemma:d-2a}(iii).

Suppose that for some $0\le k\le \dim X$ the inequality
$$
a_k\ge 2^{\frac{\dim X+1}{\dim X-k+1}}
$$
holds. Then
$$
\prod_{i=0}^{\dim X}a_i\ge \prod_{i=k}^{\dim X}a_i\ge a_k^{\dim X-k+1}\ge 2^{\dim X+1},
$$
and thus
$$
\left(1+\frac{\dim X+1}{c}\right)^{\frac{c}{\dim X+1}}\ge 2.
$$
The latter means that $\frac{c}{\dim X+1}\ge 1$. On the other hand,
we have $c\le \dim X$ by~\cite[Theorem~1.3]{ChenChenChen}, which gives a contradiction.

Now suppose that $a_1>1$.
To avoid a contradiction with~\cite[Theorem~1.3]{ChenChenChen}
one must have $a_0=1$  and either~\mbox{$a_1=\ldots=a_{\dim X}=2$},
or~\mbox{$a_1=\ldots=a_{\dim X-1}=2$} and~\mbox{$a_{\dim X}=3$}, because otherwise we obtain
$\prod_{i=0}^{\dim X} a_i\ge 2^{\dim X+1}$ and argue as above.
In both cases
one has $c\ge \dim X-1$ by Lemma~\ref{lemma:smooth-condition}.

Assume that $c=\dim X$, so that $c\ge 2$.
Then
\begin{equation}\label{eq:1}
\left(\frac{\sum_{j=1}^c \frac{d_j}{a_{\dim X+j}}}{c}\right)^c\ge \frac{\prod_{j=1}^c d_j}{\prod_{s=\dim X+1}^n a_s}\ge
\prod_{i=0}^{\dim X} a_i \ge 2^{\dim X}=2^c,
\end{equation}
so that
$$
\sum_{i=1}^c \frac{d_i}{a_{\dim X+i}}\ge 2c.
$$
Thus, as in \eqref{eq:Chens}, we get
\begin{multline*}
2c+1\ge 2\dim X+1\ge 2\dim X+2-I(X)\ge a_0+\ldots+a_{\dim X}-I(X)=\\
=\left(\sum_{j=1}^c\frac{d_j}{a_{\dim X+j}}-c\right) a_{\dim X+1}
\ge (2c-c)a_{\dim X+1}=c a_{\dim X+1}.
\end{multline*}
This implies
$$
a_{\dim X+1}\le \frac{2c+1}{c}<3.
$$
This implies $a_{\dim X+1}=2$, which gives a contradiction with
Lemma~\ref{lemma:smooth-condition}.

Finally, assume that $c=\dim X-1$.
As in \eqref{eq:1}, one has
\begin{equation*}
\left(\frac{\sum_{j=1}^c \frac{d_j}{a_{\dim X+j}}}{c}\right)^c \ge 2^{\dim X}=2^{c+1},
\end{equation*}
so that
$$
\sum_{i=1}^c \frac{d_i}{a_{\dim X+i}}\ge 2\sqrt[c]{2}c.
$$
Thus
\begin{multline*}
2c+3\ge 2\dim X+1\ge 2\dim X+2-I(X)\ge a_0+\ldots+a_{\dim X}-I(X)=\\
=\left(\sum_{j=1}^c\frac{d_j}{a_{\dim X+j}}-c\right) a_{\dim X+1}
\ge (2\sqrt[c]{2}c-c)a_{\dim X+1}=c(2\sqrt[c]{2}-1) a_{\dim X+1}.
\end{multline*}
This implies
\begin{equation}\label{eq:sqrt}
a_{\dim X+1}\le \frac{2c+3}{c(2\sqrt[c]{2}-1)}.
\end{equation}
If $c\ge 3$, then \eqref{eq:sqrt} gives $a_{\dim X+1}<3$, so that $a_{\dim X+1}=2$. This is impossible
by Lemma~\ref{lemma:smooth-condition}.
If $c=2$, then \eqref{eq:sqrt} gives
$$
a_{\dim X+1}\le \frac{7}{2(2\sqrt{2}-1)}<2,
$$
which is a contradiction.
If $c=1$, then \eqref{eq:sqrt} gives
$$
a_{\dim X+1}\le \frac{5}{3}<2,
$$
which is again a contradiction.
\end{proof}

\section{Hodge numbers}
\label{section:Hodge}

The idea of description of Hodge numbers
for complete intersections in weighted projective spaces
as dimensions of graded components of particular (bi)graded rings
goes back to~\cite{Gr69},~\cite{St77},~\cite{Do82},~\cite{PS83};
another approach, due to Hirzebruch, can be found in~\cite[Exp. XI, Theorem 2.2]{SGA7}. For complete intersections in toric varieties one can look at~\cite{BC94}.
The way of the computation called the \emph{Cayley trick} is to relate the Hodge structure of a complete intersection to the Hodge structure
of some higher-dimensional hypersurface. We describe this approach following~\cite{Ma99}.

Let $Y$ be a simplicial toric variety of dimension $n$. Let $D_1,\ldots, D_b$ be its prime boundary divisors.
Denote the group of $r$-cycles on $Y$ modulo rational equivalence by $A_r(Y)$.
Consider an $A_{n-1}(Y)$-graded ring $R_0=\CC[x_1,\ldots, x_b]$ with grading defined by
$$
\deg_A\left(\prod_{i=1}^b x_i^{r_i}\right)=\sum_{i=1}^b r_i D_i.
$$
One has $\Spec(R_0)\cong\Aff^b$, and there is a natural correspondence between rays $e_i$ of a fan of $Y$ and variables $x_i$.
Define a subvariety $Z$ in $\mathrm{Spec}(R_0)$ as a union of hypersurfaces~\mbox{$\{\prod x_i=0\mid \ e_i\notin \sigma\}$} over all cones~$\sigma$ of a fan of~$Y$. Then $Y$ is a geometric quotient
of
$$
U=\Spec(R_0)\setminus Z\subset\Aff^b
$$
by the torus
$$
\DD=\mathrm{Hom}_\Z(A_{n-1}(Y),\CC^*).
$$
We call a polynomial~\mbox{$f\in R_0$} homogeneous if all its monomials are of degree $d$ for some~\mbox{$d\in A_{n-1}(Y)$}.
For any homogeneous polynomials $f_1,\ldots, f_c$ their common zero set intersected with $U$ is stable under the action of $\DD$
so they determine a closed subset $X$ in~$Y$.

Consider a ring $R=\CC[x_1,\ldots,x_b,y_1,\ldots,y_c]$. Choose $c$ homogeneous
poly\-no\-mi\-als~\mbox{$f_1,\ldots,f_c\in R_0\subset R$}
with $\deg_A(f_i)=d_i\in A_{n-1}(Y)$. Define a bigrading on $R$ with values in $A_{n-1}(Y)\times \ZZ$ by $\mathrm{bideg}(x_i)=(D_i,0)$ and $\mathrm{bideg}(y_i)=(-d_i,1)$.
Consider the poly\-no\-mi\-al~\mbox{$F=y_1f_1+\ldots+y_cf_c$}. Obviously, one has $\mathrm{bideg}(F)=(0,1)$.
Define a Jacobian ideal
$$
J=\left(\frac{\partial F}{\partial x_1},\ldots,\frac{\partial F}{\partial x_b}, \frac{\partial F}{\partial y_1},\ldots, \frac{\partial F}{\partial y_c}\right)
$$
and a bigraded ring $R(F)=R/J$.

We will assume that the subvariety $X$ defined by the polynomials $f_1,\ldots,f_c$ is \emph{quasi-smooth}. Recall from~\cite[Definition 1.1]{Ma99} that this means that a common zero set
of~\mbox{$f_1,\ldots,f_c$} inside~$U$ is a smooth subvariety of codimension $c$, cf. Definition~\ref{definition:quasi-smooth}.
In this case~$X$ has a pure Hodge structure on its cohomology, see~\cite[\S3]{Ma99}. In particular, one can talk about Hodge numbers $h^{p,q}(X)$.

Define $c_k$ as a number of cones of dimension $k$ in the fan of $Y$.
Put
$$
l_k=\sum_{i=k}^n (-1)^{i-k} \binom{i}{k}c_{n-i}.
$$

\begin{theorem}[{see~\cite[Theorem 10.8 and Remark 10.9]{Da78}}]
\label{theorem:toric cohomology}
Let $Y$ be a
simplicial toric variety of dimension $n$. Then $\dim H^{2k}(Y,\CC)=l_k$
for all $0\le k\le n$.
\end{theorem}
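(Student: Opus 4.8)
**Proof plan for Theorem \ref{theorem:toric cohomology} (Danilov's formula for Betti numbers of simplicial toric varieties).**

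The statement to be proved is the computation $\dim H^{2k}(Y,\CC)=l_k$ for a simplicial toric variety $Y$ of dimension $n$, where $l_k=\sum_{i=k}^n(-1)^{i-k}\binom{i}{k}c_{n-i}$ and $c_j$ is the number of $j$-dimensional cones in the fan. The plan is to reduce to the known structure of the cohomology ring of a simplicial (complete, or at least nice) toric variety and then carry out the combinatorial inversion.

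First I would recall the standard fact, due to Danilov, that for a simplicial toric variety the rational cohomology vanishes in odd degrees and that $H^{2*}(Y,\CC)$ carries a presentation in terms of the Stanley--Reisner ring of the fan modulo the linear system of parameters coming from the lattice. Concretely, the even Betti numbers can be read off from the orbit decomposition: $Y$ is stratified by torus orbits $O_\sigma$, one for each cone $\sigma$, with $O_\sigma\cong(\CC^*)^{n-\dim\sigma}$, and the spectral sequence of this stratification (which degenerates for simplicial $Y$ because all strata have cohomology generated in even degree and all differentials vanish) gives $\dim H^{2k}(Y,\CC)$ as an alternating sum of the contributions $\dim H^{2j}\bigl((\CC^*)^{m}\bigr)=\binom{m}{j}$ over the strata. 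Summing over all cones $\sigma$ of dimension $n-i$, of which there are $c_{n-i}$, and taking the $(-1)^{i-k}$ signs forced by the spectral sequence / weight filtration, yields exactly $l_k=\sum_{i=k}^n(-1)^{i-k}\binom{i}{k}c_{n-i}$.

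The key steps, in order, are: (1) set up the torus-orbit stratification of $Y$ indexed by cones of the fan, recording that the stratum attached to a cone of dimension $n-i$ is an algebraic torus of dimension $i$; (2) invoke the purity/degeneration statement for simplicial toric varieties so that the associated long exact sequences (equivalently, the weight spectral sequence) collapse, giving $H^{2k}(Y,\CC)$ as the alternating sum $\sum_i(-1)^{i-k}\binom{i}{k}c_{n-i}$ of the even cohomologies $\binom{i}{k}$ of the $i$-dimensional strata; and (3) identify this alternating sum with $l_k$ as defined, which is immediate from the definition. For the reader's convenience I would cite \cite[Theorem 10.8 and Remark 10.9]{Da78} as the reference supplying (1) and (2), and note that for complete $Y$ this is classical while for general simplicial $Y$ one uses Danilov's computation of the mixed Hodge structure on $H^*(Y)$, which for a simplicial toric variety is pure of Hodge--Tate type in each degree.

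The main obstacle is step (2): justifying the degeneration. If $Y$ is not complete, one cannot appeal to Poincaré duality, and the argument that all differentials in the stratification spectral sequence vanish relies on the fact that for simplicial toric varieties $H^{2k+1}=0$ and the cohomology is spanned by classes of orbit closures, so that the weight filtration forces the spectral sequence to degenerate at $E_1$. I would handle this by citing Danilov directly rather than reproving it, since \cite{Da78} establishes precisely the needed vanishing and degeneration; the remaining content is the bookkeeping in (3), which is purely formal. An alternative route, in case one wants a self-contained argument for complete $Y$, is to use the decomposition $H^{2k}(Y,\CC)\cong\bigoplus$ (pieces indexed by cones) coming from the BBD decomposition theorem applied to a toric resolution together with the combinatorics of the $h$-vector, but for the purposes of this paper invoking Danilov's theorem is the cleanest path.
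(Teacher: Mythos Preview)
The paper does not prove this theorem at all: it is quoted verbatim from Danilov's survey \cite{Da78} and used as a black box. So there is nothing to compare your argument against; your proposal already goes further than the paper by sketching why the formula holds.

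On the substance of your sketch: the strategy via the torus-orbit stratification is the right one, but one detail is miswritten. You write $\dim H^{2j}\bigl((\CC^*)^m\bigr)=\binom{m}{j}$, whereas in fact $\dim H^{j}\bigl((\CC^*)^m\bigr)=\binom{m}{j}$. The cleanest way to package the computation is through compactly supported cohomology: since $Y=\coprod_\sigma O_\sigma$ with $O_\sigma\cong(\CC^*)^{\,n-\dim\sigma}$ and $P^c_{(\CC^*)^m}(t)=(t^2-1)^m$, additivity gives $P^c_Y(t)=\sum_j c_j(t^2-1)^{n-j}$; extracting the coefficient of $t^{2k}$ and substituting $i=n-j$ yields exactly $l_k$. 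For \emph{complete} $Y$ one has $P^c_Y=P_Y$, and the vanishing of odd cohomology for simplicial complete toric varieties (this is where Danilov's work enters) lets you pass from the virtual identity to genuine Betti numbers.

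You were right to flag completeness as the real obstacle. In fact the formula as stated is false for non-complete $Y$ (already $Y=\CC^*$ gives $l_0=-1$), so the hypothesis ``complete'' is implicitly assumed; in the paper's applications the ambient toric varieties are projective, so this is harmless.
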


There is an analog of Lefschetz hyperplane section theorem for complete intersections
in simplicial toric varieties, see e.g.~\cite[Proposition~1.4]{Ma99}.
In particular, the only Hodge numbers of such complete intersection $X$ that are not inherited from
the ambient toric variety are $h^{p,q}(X)$ with $p+q=\dim(X)$.

\begin{theorem}[{see \cite[Theorem 3.6]{Ma99}}]
\label{theorem: toric complete intersections}
Let $Y$ be a simplicial toric variety of dimension $n$ and let $D_1,\ldots,D_b$ be its boundary divisors. Let $X\subset Y$ be a quasi-smooth complete intersection of ample divisors
defined by homogeneous polynomials $f_1,\ldots,f_c$ with~\mbox{$\deg_A (f_i)=d_i\in A_{n-1}$}. Suppose that $\dim X=n-c\ge 3$.
Denote
$$
\sum_{r=1}^b D_r-\sum_{s=1}^{c}d_s\in A_{n-1}(X)
$$
by $i(X)$.
Then for $p\neq \frac{n-c+1}{2}$ and $p\neq \frac{n-c}{2}$ one has
$$
h^{n-c-p,p}(X)=\dim R(F)_{\left(-i(X),p\right)}.
$$
For $p= \frac{n-c+1}{2}$ one has
$$
h^{p-1,p}(X)=\dim R(F)_{\left(-i(X),p\right)}+l_{p-c}-l_{p}.
$$
For $p= \frac{n-c}{2}$ one has
$$
h^{p,p}(X)=\dim R(F)_{\left(-i(X),p\right)}+l_p.
$$
\end{theorem}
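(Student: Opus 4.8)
\medskip

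\noindent The plan is to use the \emph{Cayley trick}, which reduces the statement for the codimension-$c$ complete intersection $X$ to the already understood case of a quasi-smooth hypersurface (one could instead try a direct Griffiths--Dwork residue calculus for complete intersections, but passing to a hypersurface is cleaner). I would pass to the projective bundle $\varrho\colon\PP_Y\to Y$, where $\PP_Y=\PP\bigl(\bigoplus_{s=1}^{c}\O_Y(-d_s)\bigr)$, which is again a complete simplicial toric variety, of dimension $N:=n+c-1$, whose Cox ring is exactly $R=\CC[x_1,\ldots,x_b,y_1,\ldots,y_c]$ and whose class group is $A_{N-1}(\PP_Y)\cong A_{n-1}(Y)\times\ZZ$, the identification being the very bidegree $\mathrm{bideg}$ fixed above: the $x_i$ give the pullbacks of the $D_r$, the $y_s$ give the relative boundary divisors, and $(0,1)$ is the relative hyperplane class. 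Then $F=y_1f_1+\ldots+y_cf_c$ is a homogeneous section of the class $(0,1)$ and cuts out a hypersurface $\widehat X=\{F=0\}\subset\PP_Y$ of dimension $N-1=\dim X+c-1$; note that $J$ is literally the Jacobian ideal of $F$ on $\PP_Y$, so $R(F)$ is the Jacobian ring attached to $\widehat X$. The first thing to check is that $\widehat X$ is quasi-smooth and that its class is semiample and big enough to run the toric hypersurface theory; both follow from quasi-smoothness of $X$ and ampleness of the $d_s$ on $Y$ by inspecting $\partial F/\partial x_i$ and $\partial F/\partial y_s$ away from the irrelevant locus.

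Next I would invoke the Batyrev--Cox / Griffiths--Dwork description of the primitive (variable) middle cohomology of a quasi-smooth ample hypersurface in a complete simplicial toric variety: up to the standard normalization, the $(N-1-p',\,p')$-Hodge component of $H^{N-1}_{\mathrm{prim}}(\widehat X)$ is $R(F)_{(p'+1)[\widehat X]-(-K_{\PP_Y})}$. Computing in the bigrading, $-K_{\PP_Y}=(\sum_r D_r-\sum_s d_s,\ c)=(i(X),c)$ and $[\widehat X]=(0,1)$, so $(p'+1)[\widehat X]-(-K_{\PP_Y})=(-i(X),\ p'+1-c)$; putting $p'=p+c-1$ this becomes $(-i(X),p)$, while the Hodge bidegree $(N-1-p',p')$ becomes $(\dim X-p+(c-1),\ p+(c-1))$. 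Thus the hypersurface formula reads $h^{\dim X-p+(c-1),\ p+(c-1)}_{\mathrm{prim}}(\widehat X)=\dim R(F)_{(-i(X),p)}$, which is the desired right-hand side decorated by a shift of $(c-1,c-1)$.

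It remains to compare $H^{N-1}(\widehat X)$ with $H^{\dim X}(X)$. The morphism $\varrho|_{\widehat X}\colon\widehat X\to Y$ is a $\PP^{c-2}$-bundle over $Y\setminus X$ that degenerates to a $\PP^{c-1}$-bundle over $X$; running the decomposition theorem for $\varrho|_{\widehat X}$ (equivalently, comparing $\widehat X$ with the full bundle $\PP_Y$) one gets that $H^{N-1}_{\mathrm{prim}}(\widehat X)$, as a Hodge structure, is $H^{\dim X}_{\mathrm{prim}}(X)$ Tate-twisted by $c-1$, while every other summand of $R\varrho_*\mathbb{Q}$ contributes only classes inherited from $H^\bullet(Y)$. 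Since $Y$ is complete and simplicial, $H^\bullet(Y)$ sits in even degrees with pure $(k,k)$ Hodge type and $\dim H^{2k}(Y)=l_k$ by Theorem~\ref{theorem:toric cohomology}; combining this with the Lefschetz theorem for complete intersections in toric varieties (\cite[Proposition~1.4]{Ma99}) shows that for $p\neq\frac{\dim X}{2},\frac{\dim X+1}{2}$ the group $H^{\dim X-p,p}(X)$ is entirely primitive, giving the formula on the nose, whereas at the two exceptional values of $p$ one picks up exactly the inherited/degeneration corrections $+l_p$ and $+l_{p-c}-l_p$ (here $l_k=0$ for $k<0$ by convention).

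The main obstacle is precisely the bookkeeping at those two middle values of $p$: one must identify the non-primitive contribution to $H^{\dim X}(X)$ --- part pulled back from $Y$, part coming from the fibrewise jump of $\varrho|_{\widehat X}$ from $\PP^{c-2}$ to $\PP^{c-1}$ over $X$ --- and match it combinatorially with the $l_k$ to produce $l_p$ respectively $l_{p-c}-l_p$. Equally delicate is keeping all the normalizations coherent: the twist by $-i(X)$ in the first grading, the Tate twist by $c-1$ in the Cayley comparison, and the parity analysis distinguishing $p=\frac{\dim X}{2}$ from $p=\frac{\dim X+1}{2}$. By contrast, the quasi-smoothness and positivity checks for $\widehat X$ in the first step, and the Griffiths--Dwork description of its primitive cohomology, are routine once the Cayley cover is in place.
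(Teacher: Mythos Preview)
The paper does not give its own proof of this statement: it is quoted verbatim from \cite[Theorem~3.6]{Ma99} and is used as a black box, the surrounding text only explaining that the method of \cite{Ma99} is the Cayley trick relating the complete intersection to a hypersurface in a higher-dimensional toric variety. Your sketch is precisely an outline of that Cayley-trick argument, so in spirit it matches the source the paper defers to; there is nothing in the paper itself to compare it against beyond that.

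Your outline is essentially correct, but one point deserves a caveat. The class $[\widehat X]=(0,1)$ on $\PP_Y$ is the relative hyperplane class, hence only semiample (it is big but not ample on the total space), so you cannot invoke the original Batyrev--Cox theorem for \emph{ample} hypersurfaces directly; this is exactly why Mavlyutov first extends the Griffiths--Dwork description to the semiample/big case before running the Cayley comparison. You nod at this (``semiample and big enough''), but if you were writing a full proof you would need that extension as a genuine ingredient, not a parenthetical. The remaining bookkeeping you flag---the Tate twist by $c-1$ and the identification of the non-primitive corrections with $l_p$ and $l_{p-c}-l_p$---is indeed the fiddly part, and your description of where those terms come from (restriction from $Y$ versus the jump in fibre dimension of $\varrho|_{\widehat X}$) is accurate.
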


In a particular case of complete intersections in a weighted projective space
the even cohomology spaces $H^{2k}(Y,\CC)$ are one-dimensional, see~\cite[Corollary 2.3.6]{Do82}.
This allows to simplify Theorem~\ref{theorem: toric complete intersections} in this case.
Recall that for a weighted complete intersection $X$ in $\PP(a_0,\ldots,a_n)$ of multidegree $(d_1,\ldots,d_c)$ we denote the number~\mbox{$\sum a_i-\sum d_j$} by~$I(X)$.

\begin{corollary}
\label{corollary: weighted complete intersection}
Let $\P=\P(a_0,\ldots, a_n)$ be a well formed weighted projective space,
and~\mbox{$X\subset \P$} be a quasi-smooth weighted complete intersection
defined by homogeneous polynomials $f_1,\ldots,f_c$ with $\deg (f_i)=d_i$. Suppose that
$\dim X=n-c\ge 3$.
Then for $p\neq \frac{n-c}{2}$ one has
$$
h^{n-c-p,p}(X)=\dim R(F)_{\left(-I(X),p\right)}
$$
and for $p=\frac{n-c}{2}$ one has
$$
h^{p,p}(X)=\dim R(F)_{\left(-I(X),p\right)}+1.
$$
\end{corollary}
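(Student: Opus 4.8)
The plan is to deduce Corollary~\ref{corollary: weighted complete intersection} directly from Theorem~\ref{theorem: toric complete intersections} by specializing to the case $Y=\P=\P(a_0,\ldots,a_n)$ and checking that, in this particular toric setting, the extra correction terms $l_{p-c}-l_p$ and $l_p$ collapse to the stated values. First I would recall that a well formed weighted projective space is a simplicial projective toric variety of dimension $n$, whose boundary divisors $D_0,\ldots,D_n$ are the images of the coordinate hyperplanes; the class group $A_{n-1}(\P)\cong\Cl\,\P$ is $\Z$, generated by $\O_\P(1)$, and under this identification $D_i$ corresponds to $a_i$ and $d_i\in A_{n-1}(\P)$ corresponds to the integer $d_i$. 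Hence $i(X)=\sum a_i-\sum d_j=I(X)$ in the notation of the excerpt, so the right-hand sides $\dim R(F)_{(-i(X),p)}$ become $\dim R(F)_{(-I(X),p)}$, and the divisors $D_i$ are ample (each $a_i>0$), so a quasi-smooth weighted complete intersection that is cut out by genuinely homogeneous polynomials satisfies the hypotheses of Theorem~\ref{theorem: toric complete intersections}.

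The key computational step is to evaluate the numbers $l_k$ for $Y=\P$. The cited fact \cite[Corollary 2.3.6]{Do82} says that $H^{2k}(\P,\CC)$ is one-dimensional for $0\le k\le n$ and vanishes in odd degrees; combined with Theorem~\ref{theorem:toric cohomology} this forces $l_k=\dim H^{2k}(\P,\CC)=1$ for every $0\le k\le n$ (and $l_k=0$ otherwise). Plugging $l_{p-c}=l_p=1$ into the middle-dimensional formulas of Theorem~\ref{theorem: toric complete intersections} gives $h^{p-1,p}(X)=\dim R(F)_{(-I(X),p)}+1-1=\dim R(F)_{(-I(X),p)}$ when $p=\frac{n-c+1}{2}$, i.e. when $\dim X=n-c$ is odd, and $h^{p,p}(X)=\dim R(F)_{(-I(X),p)}+1$ when $p=\frac{n-c}{2}$, i.e. when $\dim X$ is even. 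Thus the first (generic) formula of the Corollary absorbs the odd-dimensional middle case as well, and only the even-dimensional middle case retains a $+1$; this is exactly the statement to be proved. One should double-check the edge conditions $0\le p-c$ and $p\le n$ needed for $l_{p-c}=l_p=1$: since $0\le p\le n-c$ along the middle cohomology and $c\ge 0$, both inequalities hold automatically.

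The main obstacle, such as it is, is bookkeeping rather than any genuine difficulty: one must make sure the identifications $A_{n-1}(\P)=\Z$, $D_i\leftrightarrow a_i$, $d_i\leftrightarrow d_i$, and the resulting $i(X)=I(X)$ are set up consistently with the grading conventions used to define $R(F)$ in the Cayley-trick construction (in particular that the bidegree $(-I(X),p)$ on the right-hand side matches what Theorem~\ref{theorem: toric complete intersections} produces), and that the ampleness and quasi-smoothness hypotheses are genuinely satisfied for the weighted complete intersections under consideration. Once these routine verifications are in place, the corollary follows by simply substituting $l_k=1$ into the three cases of Theorem~\ref{theorem: toric complete intersections} and observing that the two middle cases merge.
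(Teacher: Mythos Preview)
Your approach is exactly the paper's: specialize Theorem~\ref{theorem: toric complete intersections} to $Y=\P$, use \cite[Corollary~2.3.6]{Do82} (together with Theorem~\ref{theorem:toric cohomology}) to conclude $l_k=1$, and watch the correction terms collapse to $0$ and $+1$ in the two middle cases. The only slip is in your edge-condition check: the inequalities $0\le p\le n-c$ and $c\ge 0$ do not by themselves force $p-c\ge 0$ (for $p=\tfrac{n-c+1}{2}$ one has $p-c=\tfrac{n-3c+1}{2}$, which can be negative), though the paper's own one-line derivation does not address this point either.
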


\begin{example}
Consider the weighted projective space $\P=\P(1,1,1,1,1,1,3)$ with weighted
homogeneous coordinates $x_0,\ldots,x_6$, where the weights of $x_0,\ldots,x_5$ equal $1$,
and the weight of $x_6$ equals $3$. Let $X$ be a (general) weighted complete intersection of hypersurfaces of degrees $2$ and $6$ in~$\P$
given by polynomials $f_1$ and $f_2$, respectively.
Thus~\mbox{$F=y_1f_1+y_2f_2$} and
$$
J=\left(\frac{\partial F}{\partial x_0},\ldots,\frac{\partial F}{\partial x_6}, f_1,f_2\right).
$$
One has
\begin{multline*}
\bideg (x_0)=\ldots =\bideg(x_5)=(1,0), \bideg(x_6)=(3,0),\\
\bideg(y_1)=(-2,1), \bideg(y_2)=(-6,1).
\end{multline*}

Since $I(X)=1$, one gets $h^{1,3}(X)=\dim R(F)_{(-1,1)}$. The component $R(F)_{(-1,1)}$ is generated by polynomials
of type $g_1y_1+g_5y_2$, where $g_s$ are polynomials of degrees $s$ in $x_i$.
There are $6$ parameters for $g_1$ and $\binom{10}{5}+\binom{7}{5}=273$ parameters for $g_5$,
so $R_{(-1,1)}=279$.
There are no polynomials from $R_{(-1,1)}$ that are divisible by $f_2$, and $\binom{8}{5}=56$ parameters for polynomials
in $x_0,\ldots, x_5, y_2$ that
are divisible by $f_1$. Up to scaling there is a unique polynomial that is divisible by $f_1$ and $x_6$. Moreover, one has $\bideg(\frac{\partial F}{\partial x_i})=(-1,1)$
for $i=1,\ldots,6$, so there are $6$ parameters for polynomials from $R_{(-1,1)}$ that are divisible by $\frac{\partial F}{\partial x_i}$.
Similarly, one has $\bideg(\frac{\partial F}{\partial x_6})=(-3,1)$, so there are $\binom{7}{5}=21$ parameters for polynomials from $R_{(-1,1)}$ that are divisible by
$\frac{\partial F}{\partial x_6}$. One of them, namely $\frac{\partial F}{\partial x_6}f_1$, is already taken into account.
Thus
$$
h^{1,3}(X)=\dim R(F)_{(-1,1)}= \dim R_{(-1,1)}-56-1-6-21+1=279-56-1-6-21+1=196.
$$
\end{example}

\begin{remark}
The method given by
Corollary~\ref{corollary: weighted complete intersection} enables one to
study weighted complete intersections with interesting behavior
of Hodge numbers. Smooth Fano weighted complete
intersections with few non-vanishing Hodge numbers
(more precisely, with small \emph{Hodge complexity}) were
classified in~\cite{PSh18}.
It turns out that they always
have certain interesting properties which can be described
in terms of semi-orthogonal decompositions
of derived categories of coherent sheaves.
\end{remark}

One can obtain the following elegant formula for $\dim H^0(X,\O_X(k))$.

\begin{theorem}[{\cite[Theorem 3.4.4]{Do82}}]
\label{theorem:elegant}
Let $X$ be a quasi-smooth well formed weighted complete intersection of multidegree $(d_1,\ldots,d_c)$ in $\PP(a_0,\ldots,a_n)$.
Then
$$
\sum_{k=0}^\infty\left(\dim H^0(X,\O_X(k))\right)t^k=\frac{\prod_{s=0}^c\left(1-t^{d_s}\right)}{\prod_{r=0}^n\left(1-t^{a_r}\right)}.
$$
\end{theorem}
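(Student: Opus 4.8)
The plan is to reduce the statement to a purely algebraic computation of the Hilbert series of the weighted homogeneous coordinate ring of $X$. Write $S=\CC[x_0,\dots,x_n]$ with $\deg x_i=a_i$, let $f_1,\dots,f_c\in S$ be a regular sequence of weighted homogeneous forms of degrees $d_1,\dots,d_c$ generating the ideal of $X$, and set $A=S/(f_1,\dots,f_c)$. There are two things to establish: (a) the identification $\bigoplus_{k\ge 0}H^0(X,\O_X(k))\cong A$ of graded vector spaces; and (b) the computation $\sum_k(\dim A_k)\,t^k=\prod_{j=1}^c(1-t^{d_j})/\prod_{i=0}^n(1-t^{a_i})$. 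Putting (a) and (b) together gives the theorem.

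I would treat (b) first, since it is the easy part. The regular sequence $f_1,\dots,f_c$ gives the graded Koszul resolution
\[
0\to S\Big(-\sum_{j=1}^c d_j\Big)\to\cdots\to\bigoplus_{|I|=p}S(-d_I)\to\cdots\to S\to A\to 0,
\]
where $I$ ranges over $p$-element subsets of $\{1,\dots,c\}$ and $d_I=\sum_{j\in I}d_j$. As this complex is exact in every degree, the alternating sum of the Hilbert series of its terms equals the Hilbert series of $A$; since the Hilbert series of $S$ is the classical expression $\prod_{i=0}^n(1-t^{a_i})^{-1}$ and $S(-d_I)$ contributes $t^{d_I}$ times it, this collapses to $\prod_{j=1}^c(1-t^{d_j})/\prod_{i=0}^n(1-t^{a_i})$, as wanted.

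For (a) --- the real content --- I would use the hypotheses as follows. Since $f_1,\dots,f_c$ is a regular sequence, $A$ is a complete intersection, hence Cohen--Macaulay, of Krull dimension $n+1-c=\dim X+1$; assuming $\dim X\ge 1$ this forces $\operatorname{depth}_{\mathfrak m}A=\dim A\ge 2$, so $H^0_{\mathfrak m}(A)=H^1_{\mathfrak m}(A)=0$ for the irrelevant ideal $\mathfrak m$. Quasi-smoothness means $p^{-1}(X)=V(f_1,\dots,f_c)\setminus\{0\}$ is smooth (Definition~\ref{definition:quasi-smooth}), so the affine cone is smooth away from its vertex; hence $A$ is reduced and $\mathrm{Proj}(A)=X$. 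Well-formedness ensures that $\O_X(k)$ is the sheaf associated to the graded module $A(k)$ and that $H^0(\P,\O_\P(k))=S_k$. The vanishing $H^0_{\mathfrak m}(A)=H^1_{\mathfrak m}(A)=0$ is exactly the condition under which the graded ring $A$ agrees in every degree with $\bigoplus_{k}H^0(\mathrm{Proj}\,A,\widetilde{A(k)})$, which gives (a). Alternatively, one can sheafify the Koszul complex above, twist by $\O_\P(k)$, and run the standard d\'evissage through the short exact sequences it splits into, using the vanishing $H^i(\P,\O_\P(m))=0$ for $1\le i\le n-1$ (see~\cite{Do82}); since $\dim X=n-c\ge 1$ the $H^1$ that would obstruct the computation vanishes, and one obtains $\dim H^0(X,\O_X(k))=\sum_{p=0}^c(-1)^p\sum_{|I|=p}\dim S_{k-d_I}$ directly, which after summing against $t^k$ is the same formula.

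I expect the main obstacle to be exactly step (a): one must ensure that $(f_1,\dots,f_c)$ is the \emph{saturated} homogeneous ideal of $X$, i.e.\ that $\O_X(k)$ has no global sections beyond those coming from degree-$k$ forms on $\P$. This is where quasi-smoothness and well-formedness (and the harmless assumption $\dim X\ge 1$) genuinely enter, through the Cohen--Macaulayness of $A$ and the resulting depth estimate; everything after that, including the Hilbert-series computation in (b), is routine bookkeeping.
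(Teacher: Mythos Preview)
The paper does not supply its own proof of this theorem; it is quoted from Dolgachev \cite[Theorem~3.4.4]{Do82} and used as a black box to derive Corollary~\ref{corollary:ashnol'}. Your argument is the standard one and is correct: the Koszul resolution of $A=S/(f_1,\dots,f_c)$ gives the Hilbert series in~(b), and the identification $A_k\cong H^0(X,\O_X(k))$ in~(a) follows from the Cohen--Macaulayness of $A$ (depth $\ge 2$ once $\dim X\ge 1$) via the local-cohomology exact sequence, or equivalently by the sheafified-Koszul d\'evissage you sketch.

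Two minor remarks on the role of the hypotheses. First, quasi-smoothness is not actually needed: the regular-sequence assumption alone makes $A$ a complete intersection and hence Cohen--Macaulay, which is all step~(a) uses; your appeal to smoothness of the punctured cone (to get $A$ reduced) is superfluous for the Hilbert-series statement. Second, the identity $H^0(\P,\O_\P(k))=S_k$ holds for an arbitrary weighted projective space (see~\cite[\S1.4]{Do82}), so well-formedness is also not essential here. Neither of these is a gap---your proof goes through as written---but you slightly overstate where the hypotheses enter.
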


For a weighted projective space $\P(a_0,\ldots, a_n)$ with weighted homogeneous coordinates~\mbox{$x_0,\ldots,x_n$} denote by $P(r)$ the dimension of the vector space of (weighted) homogeneous polynomials in $x_0,\ldots,x_n$ of (weighted) degree $r$.
Theorem~\ref{theorem:elegant} implies the following.

\begin{corollary}\label{corollary:ashnol'}
Let $X$ be a quasi-smooth well formed weighted complete intersection of multidegree $(d_1,\ldots,d_c)$ in $\PP(a_0,\ldots,a_n)$.
Then
$$
\dim H^0(X,-K_X)=\sum_{s=0}^c (-1)^s \sum_{1\le k_1<\ldots<k_s\le c} P(I(X)-d_{k_1}-\ldots-d_{k_s}).
$$
\end{corollary}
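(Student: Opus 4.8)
The plan is to reduce the statement to the Hilbert series identity of Theorem~\ref{theorem:elegant} after identifying the anticanonical sheaf via adjunction. First I would apply Lemma~\ref{lemma:adjunction}, which gives $\omega_X=\O_X\left(\sum d_i-\sum a_j\right)$, so that $-K_X=\O_X(I(X))$ with $I(X)=\sum a_j-\sum d_i$ as in our conventions. Hence $\dim H^0(X,-K_X)=\dim H^0(X,\O_X(I(X)))$, and it suffices to extract the coefficient of $t^{I(X)}$ from the generating series of Theorem~\ref{theorem:elegant}.

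Next I would record the elementary fact that
$$
\sum_{r\ge 0}P(r)\,t^r=\prod_{i=0}^n\frac{1}{1-t^{a_i}},
$$
which is just the Hilbert series of the graded ring $\CC[x_0,\ldots,x_n]$ with $\deg x_i=a_i$, whose degree-$r$ component has dimension $P(r)$ by the definition of $P(r)$. Combining this with Theorem~\ref{theorem:elegant}, one obtains the identity of formal power series
$$
\sum_{k\ge 0}\dim H^0(X,\O_X(k))\,t^k=\Bigl(\prod_{j=1}^c\bigl(1-t^{d_j}\bigr)\Bigr)\cdot\sum_{r\ge 0}P(r)\,t^r.
$$

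Then I would expand the finite product $\prod_{j=1}^c(1-t^{d_j})=\sum_{s=0}^c(-1)^s\sum_{1\le k_1<\ldots<k_s\le c}t^{d_{k_1}+\ldots+d_{k_s}}$ and compare coefficients of $t^{I(X)}$ on both sides, adopting the convention $P(m)=0$ for $m<0$. On the left this coefficient equals $\dim H^0(X,\O_X(I(X)))=\dim H^0(X,-K_X)$; on the right it equals exactly $\sum_{s=0}^c(-1)^s\sum_{1\le k_1<\ldots<k_s\le c}P(I(X)-d_{k_1}-\ldots-d_{k_s})$, which is the claimed formula.

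There is essentially no serious obstacle; the only points that need a line of care are the sign and normalization when passing from $\omega_X$ to $I(X)$ (checking that $-K_X=\O_X(I(X))$ and not $\O_X(-I(X))$), and the bookkeeping convention $P(m)=0$ for negative $m$, which ensures that the right-hand side remains a finite, well-defined sum even when $I(X)$ is small, the terms with $d_{k_1}+\ldots+d_{k_s}>I(X)$ simply contributing zero. Everything else is a direct comparison of coefficients in the generating function supplied by Theorem~\ref{theorem:elegant}.
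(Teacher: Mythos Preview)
Your proposal is correct and follows exactly the route the paper indicates: the paper simply says ``Applying Theorem~\ref{theorem:elegant} together with Lemma~\ref{lemma:adjunction} one gets the following,'' and you have spelled out precisely those two steps, identifying $-K_X$ with $\O_X(I(X))$ via adjunction and then reading off the coefficient of $t^{I(X)}$ in the Hilbert series after expanding $\prod_j(1-t^{d_j})$. There is no divergence in method.
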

\begin{proof}
By Lemma~\ref{lemma:adjunction}, we are going to compute
$$
\dim H^0(X,-K_X)=\dim H^0\big(X,\O_X(I(X))\big).
$$
This number equals the coefficient at $t^{I(X)}$ in the right hand side of the equality in Theorem~\ref{theorem:elegant}.
Note that
$$
\prod_{s=0}^c\left(1-t^{d_s}\right)=\sum_{s=0}^c (-1)^s \sum_{1\le k_1<\ldots<k_s\le c} t^{d_{k_1}+\ldots+d_{k_s}}.
$$
On the other hand, the coefficient at $t^{I(X)-d_{k_1}-\ldots-d_{k_s}}$ in $\prod_{r=0}^n\left(1-t^{a_r}\right)$
is equal to the number~\mbox{$P(I(X)-d_{k_1}-\ldots-d_{k_s})$}, which easily implies the required assertion.
\end{proof}

Another approach to compute Hodge numbers for complete intersections in (usual) projective spaces is due to F.\,Hirzebruch.
Define
$$
H(d)=\frac{(y+1)^{d-1}-(z+1)^{d-1}}{(z+1)^dy-(y+1)^dz}=\frac{d-1+\binom{d-1}{2}(y+z)+\binom{d-1}{3}(y^2+yz+z^2)+\ldots}
{1-\binom{d}{2}yz-\binom{d}{3}yz(y+z)+\ldots}
$$
and
$$
H(d_1,\ldots,d_c)=\sum_{Q\subset [1,c],Q\neq \varnothing} \left((y+1)(z+1)\right)^{|Q|-1}\prod_{i\in Q}H(d_i),
$$
where $|Q|$ is a number of elements of $Q$.
If $F$ is a formal series in two variables $y$ and $z$, we denote by $F^{(m)}$ the sum of monomials in $F$ of homogeneous degree $m$.

\begin{theorem}[{see~\cite[Exp. XI, Th\'eor\`eme 2.3]{SGA7}}]
\label{theorem:Hirzebruch}
Let $X$ be a smooth complete intersection of hypersurfaces of degrees $d_1,\ldots,d_c$
in~$\P^n$.
Put $m=\dim X=n-c$. Then
$$
\sum h^{p,m-p}(X)y^pz^{m-p}=\left(H(d_1,\ldots,d_c)+\delta y^{\frac{m}{2}}z^{\frac{m}{2}}\right)^{(m)},
$$
where $\delta=1$ if $m$ is even and $\delta=0$ if $m$ is odd.
\end{theorem}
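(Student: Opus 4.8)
The plan is to follow Hirzebruch's computation of $\chi_y$-genera; the statement is classical (see~\cite[Exp.~XI]{SGA7}), but here is the outline I would use.

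\emph{Step 1 (reduction to the middle).} Since $X$ is a smooth complete intersection of ample hypersurfaces in $\P^n$, the Lefschetz hyperplane section theorem gives $H^k(X,\CC)\cong H^k(\P^n,\CC)$ for $k<m=\dim X$, and Poincar\'e duality gives the analogous statement for $k>m$. Hence $h^{p,q}(X)=h^{p,q}(\P^m)$ whenever $p+q\neq m$, i.e.\ $h^{p,p}(X)=1$ and all other such numbers vanish; so the only a priori unknown Hodge numbers of $X$ are the $h^{p,m-p}(X)$ for $0\le p\le m$ — exactly the coefficients on the left-hand side. By Hodge symmetry this polynomial is symmetric in $y$ and $z$, and setting $z=1$ recovers $\sum_p h^{p,m-p}(X)y^p$, so it suffices to determine this one-variable polynomial.

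\emph{Step 2 (the $\chi_y$-genus and Hirzebruch--Riemann--Roch).} Put $\chi_y(X)=\sum_p\chi(X,\Omega_X^p)y^p=\sum_{p,q}(-1)^q h^{p,q}(X)y^p$. By Step~1, the coefficient of $y^p$ in $\chi_y(X)$ equals $(-1)^p+(-1)^{m-p}h^{p,m-p}(X)$ for $p\neq m/2$ and $(-1)^{m/2}h^{m/2,m/2}(X)$ for $p=m/2$, so $\chi_y(X)$ together with the known $\P^m$-part determines all the $h^{p,m-p}(X)$; the term $\delta\,y^{m/2}z^{m/2}$ in the statement is precisely the non-primitive middle Hodge class. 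By Hirzebruch--Riemann--Roch, $\chi_y(X)=\int_X\prod_i Q_y(x_i)$, where the $x_i$ are the Chern roots of $T_X$ and $Q_y(x)=\frac{x(1+y)}{1-e^{-x(1+y)}}-xy$ is the multiplicative power series of the $\chi_y$-genus (reducing to the Todd, $L$- and Euler classes at $y=0,1,-1$).

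\emph{Step 3 (evaluation as a residue).} The Euler sequence on $\P^n$ and the normal bundle sequence $0\to T_X\to T_{\P^n}|_X\to\bigoplus_{i=1}^c\O_X(d_i)\to 0$ give, with $h$ the restricted hyperplane class, the identity of total characteristic classes on $X$ (using $Q_y(0)=1$)
$$
\prod_i Q_y(x_i)=\frac{Q_y(h)^{\,n+1}}{\prod_{i=1}^c Q_y(d_ih)}.
$$
Since $[X]=(d_1\cdots d_c)\,h^c$ in $H^{2c}(\P^n)$, integration over $X$ amounts to extracting the coefficient of $h^m$, so
$$
\chi_y(X)=\operatorname{Res}_{h=0}\frac{d_1\cdots d_c}{h^{\,m+1}}\cdot\frac{Q_y(h)^{\,n+1}}{\prod_{i=1}^c Q_y(d_ih)}\,dh .
$$
A substitution of the type $w=e^{-h(1+y)}$ turns $Q_y(h)/h$ and each $Q_y(d_ih)$ into rational functions of $w$, converting the right-hand side into the residue of an explicit rational function; expanding $\big(\prod_i Q_y(d_ih)\big)^{-1}$ by partial fractions then produces the sum over nonempty subsets $Q\subset[1,c]$ weighted by $\big((y+1)(z+1)\big)^{|Q|-1}\prod_{i\in Q}H(d_i)$ that defines $H(d_1,\dots,d_c)$ (the second variable $z$ entering through the substitution), and taking the homogeneous degree-$m$ part selects the case of $\P^n$ with $n=m+c$. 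One can instead start from the Griffiths--Steenbrink description of the middle Hodge numbers of a complete intersection — the same circle of ideas used in~\S\ref{section:Hodge} via the Jacobian ring $R(F)$ — and sum the dimensions of the relevant graded pieces into a generating function, identifying it with $H(d_1,\dots,d_c)$.

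Everything up to the residue is formal; the real work, and the main obstacle, is the change of variables and partial-fraction bookkeeping in Step~3, organized so that the closed form $H(d_1,\dots,d_c)$ appears with exactly the factors $\big((y+1)(z+1)\big)^{|Q|-1}$ and so that the $\delta$-correction for even $m$ comes out right.
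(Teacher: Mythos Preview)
The paper does not actually prove this theorem: it is stated with the citation ``see~\cite[Exp.~XI, Th\'eor\`eme~2.3]{SGA7}'' and no proof is supplied, so there is nothing to compare your argument against. The result is quoted as background for computing Hodge numbers and is used only as an alternative to Corollary~\ref{corollary: weighted complete intersection} in the tabulation in~\S\ref{section:tables}.

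As for your outline itself, it follows the classical Hirzebruch route and the structure is sound: Lefschetz plus Poincar\'e duality isolates the middle Hodge numbers, Hirzebruch--Riemann--Roch expresses $\chi_y(X)$ as an integral of a multiplicative class, and the Euler and normal bundle sequences reduce this to a residue in a single variable. You are candid that Step~3 is where the real computation lies and that you have not actually carried out the substitution and partial-fraction manipulation that produces the closed form $H(d_1,\ldots,d_c)$ with its $\bigl((y+1)(z+1)\bigr)^{|Q|-1}$ weights. That is the genuine content of the SGA7 argument, and your sketch does not supply it; so what you have is a correct strategic outline rather than a proof. If you wanted to complete it, the cleanest path is to work with the generating function $\sum_{n\ge 0}\chi_y(X_n)t^n$ over all ambient dimensions simultaneously (as Hirzebruch does), which is what makes the rational-function identity tractable.
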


\begin{remark}
There is a conjectural approach to description of Hodge numbers of Fano varieties via their Landau--Ginzburg models, see~\cite{KKP14}. It was verified for del Pezzo surfaces
(see~\cite{LP16}) and Fano threefolds (see~\cite{Prz13} and~\cite{ChP18}); for smooth toric varieties see~\cite{Ha16}.
Its reformulation in terms of toric Landau--Ginzburg models for one of the Hodge numbers was checked 
for complete intersections in projective spaces (see~\cite{PS15}).
\end{remark}

\section{Dimensions $4$ and $5$}
\label{section:tables}

In this section we provide a classification of smooth well formed Fano weighted complete intersections of dimensions $4$ and $5$.
To simplify
conventions, we exclude the projective space (which is a codimension~$0$
smooth Fano complete intersection in itself) from our lists.

We find the weighted complete intersections we are interested in
by a straightforward check of all possible weights and degrees. Namely, we know
that if there is a smooth well formed Fano weighted complete intersection of dimension~$r$ and multidegree~\mbox{$(d_1,\ldots,d_{n-r-1})$} in~\mbox{$\PP(a_0,\ldots,a_n)$} that is not an intersection with a linear cone, then~\mbox{$n\le 2r$} by~\cite[Theorem~1.3]{ChenChenChen}.
Next, by Theorem~\ref{theorem:main} we have
$a_i\le n$ for all~\mbox{$0\le i\le n$},
so that for any given dimension $r$ we have to check only a finite number of possible weights
$a_0,\ldots,a_n$; one can further reduce the number of cases to be checked
using Lemma~\ref{lemma:middle-bound}.
Finally, the degrees $d_j$ can be bounded from above in terms of
$a_i$ (see Theorem~\ref{theorem:main} or Lemma~\ref{lemma:adjunction}), so we have
only a finite number of possible collections~\mbox{$d_1,\ldots,d_{n-r-1}$} to check;
here the number of cases can also be reduced using
the lower bounds from Lemma~\ref{lemma:d-2a}.
Note that at this step we obtain only necessary conditions on the weights and degrees,
and in each case one has to check that there actually exists a weighted complete
intersection with the corresponding parameters,
and that it is smooth. This can be done by writing down a general
equation of a weighted complete intersection in the few remaining families.
The Hodge numbers of our weighted complete intersections
are computed using Corollary~\ref{corollary: weighted complete intersection},
and sometimes Theorem~\ref{theorem:Hirzebruch} when the latter is more convenient to apply.
Since the Hodge numbers are constant in smooth families,
for these computations it is enough to pick one example in each of the families.
In principle, all this can be done in an arbitrary given dimension, although the number of arising
Fano varieties becomes rather large in higher dimensions. For instance, there are~$22$
deformation families of smooth well formed four-dimensional Fano weighted complete intersections
that are not intersections with a linear cone (excluding the variety~$\PP^4$, as usual);
there are~$35$ such deformation families of fivefolds; and there are~$72$ such families of sixfolds
(here the number of families of codimension $1$, $2$, $3$, $4$, $5$, and~$6$
weighted complete intersections equals~$18$, $31$, $15$, $5$, $2$,
and~$1$, respectively).

Let $X$ be a smooth well formed Fano weighted complete intersection of
multi\-deg\-ree~\mbox{$(d_1,\ldots,d_c)$} in $\PP=\PP(a_0,\ldots,a_n)$ of dimension~\mbox{$n-c\ge 3$}.
Important invariants of~$X$ are its anticanonical degree $(-K_X)^{\dim X}$, the dimension $h^0(-K_X)$, and the index~$I(X)$, which is
defined as the maximal number $i$ such that $K_X$ is divisible by~$i$ in~\mbox{$\Pic (X)$}.
Since~\mbox{$\dim X\ge 3$}, the class of the line bundle $\mathcal{O}_{\P}(1)\vert_X$ is not divisible in~$\Pic(X)$, see~\cite[Remark~4.2]{Okada2}. Therefore, by Lemma~\ref{lemma:adjunction}
one has
$$
I(X)=\sum a_i-\sum d_j.
$$
For the anticanonical degree of $X$ one has
$$
(-K_X)^{\dim(X)}=\frac{\prod d_j}{\prod a_i}\cdot I(X)^{\dim(X)}.
$$
The number $h^0(-K_X)$ can be computed by Corollary~\ref{corollary:ashnol'}.

We will use the abbreviation
\begin{equation*}
(a_0^{k_0},\ldots,a_m^{k_m})=
(\underbrace{a_0,\ldots,a_0}_{k_0\ \text{times}},\ldots,\underbrace{a_m,\ldots,a_m}_{k_m\ \text{times}}),
\end{equation*}
where $k_0,\ldots,k_m$ will be allowed to be
any positive integers. If some $k_i$ is equal to $1$ we drop it for simplicity.

Table~\ref{table:Fano-dim-4} contains a list of all smooth well formed Fano weighted complete intersections of dimension $4$ that are not intersections with linear cones. This list was obtained in~\cite[Proposition~2.2.1]{Kuchle-Geography}, cf.~\cite[\S1.3]{BrownKasprzyk}.

\begin{center}
\begin{longtable}{||c|c|c|c|c|c|c|c||}
  \hline
  No. & $I$ & $\P$ & Degrees & $K^4$ & $h^0(-K)$ & $h^{1,3}$ & $h^{2,2}$
  \\
  \hline
  \hline
  \endhead
1 & 1 & $\PP(1^3,2^2,3^2)$ & 6,6 & 1 & 3 & 107 & 503
\\\hline
2 & 1 & $\PP(1^4,2,5)$ & 10 & 1 & 4 & 412 & 1801
\\\hline
3 & 1 & $\PP(1^4,2^2,3)$ & 4,6 & 2 & 4 & 121 & 572
\\\hline
4 & 1 & $\PP(1^5,4)$ & 8 & 2 & 5 & 325 & 1452
\\\hline
5 & 1 & $\PP(1^5,2)$ & 6 & 3 & 5 & 156 & 731
\\\hline
6 & 1 & $\PP(1^5,2^2)$ & 4,4 & 4 & 5 & 75 & 378
\\\hline
7 & 1 & $\PP(1^6,3)$ & 2,6 & 4 & 6 & 196 & 912
\\\hline
8 & 1 & $\PP^5$ & 5 & 5 & 6 & 120 & 581
\\\hline
9 & 1 & $\PP(1^6,2)$ & 3,4 & 6 & 6 & 71 & 364
\\\hline
10 & 1 & $\PP^6$ & 2,4 & 8 & 7 & 77 & 394
\\\hline
11 & 1 & $\PP^6$ & 3,3 & 9 & 7 & 49 & 267
\\\hline
12 & 1 & $\PP^7$ & 2,2,3 & 12 & 8 & 42 & 236
\\\hline
13 & 1 & $\PP^8$ & 2,2,2,2 & 16 & 9 & 27 & 166
\\\hline\hline
14 & 2 & $\PP(1^5,3)$ & 6 & 32 & 15 & 70 & 382
\\\hline
15 & 2 & $\PP^5$ & 4 & 64 & 21 & 21 & 142
\\\hline
16 & 2 & $\PP^6$ & 2,3 & 96 & 27 & 8 & 70
\\\hline
17 & 2 & $\PP^7$ & 2,2,2 & 128 & 33 & 3 & 38
\\\hline\hline
18 & 3 & $\PP(1^4,2,3)$ & 6 & 81 & 25 & 24 & 161
\\\hline
19 & 3 & $\PP(1^5,2)$ & 4 & 162 & 40 & 5 & 52
\\\hline
20 & 3 & $\PP^5$ & 3 & 243 & 55 & 1 & 21
\\\hline
21 & 3 & $\PP^6$ & 2,2 & 324 & 70 & 0 & 8
\\\hline\hline
22 & 4 & $\PP^5$ & 2 & 512 & 105 & 0 & 2
\\
  \hline
\caption[]{\label{table:Fano-dim-4} Fourfold Fano weighted complete intersections.}
\end{longtable}
\end{center}

\begin{remark}\label{remark:Kuchle}
Note that there is a misprint in the first line of the table on~\cite[p.~50]{Kuchle-Geography}:
the varieties described there should be understood as complete intersections of
\emph{two} hypersurfaces of weighted degree $6$ in $\P(1^3,2^2,3^2)$. This corresponds to
family No.~1 in Table~\ref{table:Fano-dim-4}.
\end{remark}

\begin{remark}\label{remark:linear-cone}
The numerical data listed in Table~\ref{table:Fano-dim-4}
does not describe \emph{every} variety
in the corresponding deformation family.
For example, a quartic in $\P^5$
can be seen as a complete intersection of bidegree ($2,4$)
in $\P(1^6,2)$, that is an intersection with a linear cone.
A non-general variety of the latter
type can be contained in a hypersurface of weighted degree~$2$
whose equation does not depend on the variable of weight~$2$; such complete
intersections cannot be embedded as quartics in $\P^5$.
\end{remark}

Looking at the anticanonical degrees and dimensions of anticanonical linear systems
of varieties from Table~\ref{table:Fano-dim-4}, we see that varieties from different
families are never isomorphic to each other. Similarly, none of them
is isomorphic to any of the smooth Fano fourfolds
that are zero loci of sections of homogeneous vector bundles on Grassmannians,
see~\cite[Theorem~4.2.1]{Kuchle-Geography}.

The information
concerning rationality of varieties listed in Table~\ref{table:Fano-dim-4}
that we are aware of is summarized in Table~\ref{table:Fano-dim-4-rationality}.
In the first column, we list the number of the family according to
Table~\ref{table:Fano-dim-4}. In the second column, we give the condition for
a variety in the family to be not stably rational (this might be either ``very general'' or ``none'').
In the third column, we give the condition for a variety in the family to be non-rational
(in most cases this information is obtained just from the previous column).
In the fourth column, we give the condition for a variety in the family to be rational.
In all these columns empty cells mean that we know nothing about
the corresponding property for the varieties in the family.
Finally, in the last column we give references for the corresponding theorems.

\def\arraystretch{1.4}
\begin{center}
\begin{longtable}{||c|c|c|c|c||}
  \hline
  No. & Not stably rational & Non-rational & Rational & Reference
  \\
  \hline
  \hline
\endhead
 $2$ & very general & very general & & \cite[Corollary~1.4]{Okada1}\\
 \hline
 $4$ & very general & very general & & \cite[Theorem~1.1]{Okada1}\\
 \hline
 $5$ & very general & very general & & \cite[Corollary~1.4]{Okada1}\\
 \hline
 $8$ & very general & general &  & \cite{Totaro}, \cite{Schreieder}, \cite{Pukhlikov1}\\
 \hline
 $14$ & very general  & very general & & \cite[Theorem~1.1]{Okada1}\\
 \hline
 $15$ & very general & very general & & \cite{Totaro}, \cite{Schreieder}\\
 \hline
 $17$ & very general &  very general & some & 
 \cite{HPT17}\\
 \hline
 $18$ & very general & very general & & \cite[Theorem~1.3]{Okada1}\\
 \hline
 $19$ & very general & very general & & \cite{HPT}\\
 \hline
 $20$ &  &  & some & 
 \cite{RussoStagliano}\\
 \hline
 $21$ & none & none & all & 
 projection from a line \\
 \hline
 $22$ & none & none & all & 
 projection from a point \\
 \hline
 \caption[]{\label{table:Fano-dim-4-rationality} Rationality for fourfold Fano weighted complete intersections.}
\end{longtable}
\end{center}

Now we consider five-dimensional weighted complete intersections.
Table~\ref{table:Fano-dim-5} contains a list of all smooth well formed Fano weighted complete intersections of dimension $5$ that are not intersections with linear cones.

\def\arraystretch{1.4}
\begin{center}
\begin{longtable}{||c|c|c|c|c|c|c|c||}
  \hline
  No. & $I$ & $\P$ & Degrees & $-K^5$ &  $h^0(-K)$ & $h^{1,4}$ & $h^{2,3}$
  \\
  \hline
  \hline
\endhead
1   & $1$ & $\PP(1^5,2,3,3)$ & $6,6$ & $2$ & 5 & 354 & 4594
 \\
  \hline
2 & $1$ & $\PP(1^6,5)$ & $10$ & $2$ & 6 & 1996 & 24576
 \\
  \hline
3   & $1$ & $\PP(1^6,2,3)$ & $4,6$ & $4$ & 6 & 359 & 4758
 \\
  \hline
4   & $1$ & $\PP(1^7,4)$ & $2,8$ & $4$ & 7 & 1386 & 15771
 \\
  \hline
5 & $1$ & $\PP^6$ & $6$ & $6$ & 7 & 455 & 6055
 \\
  \hline
6   & $1$ & $\PP(1^7,2)$ & $4,4$ & $8$ & 7 & 168 & 2383
 \\
   \hline
7  & $1$ & $\PP(1^8,3)$ & 2,2,6 & $8$ & 8 & 568 & 7571
\\
  \hline
8 & $1$ & $\PP^7$ & $2,5$ & $10$ & 8 & 294 & 4074
 \\
  \hline
9 & $1$ & $\PP^7$ & $3,4$ & $12$ & 8 & 147 & 2142
 \\
   \hline
10  & $1$ & $\PP^8$ & 2,2,4 & $16$ & 9 & 156 & 2295
\\
   \hline
11  & $1$ & $\PP^8$ & 2,3,3 & $18$ & 9 & 88 & 1364
\\
   \hline
12  & $1$ & $\PP^9$ & 2,2,2,3 & $24$ & 10 & 72 & 1155
\\
\hline
13  & $1$ & $\PP^{10}$ & 2,2,2,2,2 & $32$ & 11 & 44 & 759
\\
  \hline\hline
14   & $2$ & $\PP(1^4,2,2,3,3)$ & $6,6$ & $32$ & 12 & 122 & 1920
 \\
  \hline
15   & $2$ & $\PP(1^5,2,5)$ & $10$ & $32$ & 16 & 790 & 11020
 \\
  \hline
16   & $2$ & $\PP(1^5,2,2,3)$ & $4,6$ & $64$ & 17 & 117 & 1936
 \\
  \hline
17   & $2$ & $\PP(1^6,4)$ & $8$ & $64$ & 21 & 462 & 6891
 \\
  \hline
18   & $2$ & $\PP(1^6,2)$ & $6$ & $96$ & 22 & 147 & 2457
 \\
  \hline
19   & $2$ & $\PP(1^6,2,2)$ & $4,4$ & $128$ & 23 & 44 & 867
 \\
  \hline
20   & $2$ & $\PP(1^7,3)$ & $2,6$ & $128$ & 27 & 183 & 3072
 \\
  \hline
21 & $2$ & $\PP^6$ & $5$ & $160$ & 28 & 84 & 1554
 \\
  \hline
22   & $2$ & $\PP(1^7,2)$ & $3,4$ & $192$ & 29 & 35 & 742
 \\
  \hline
23  & $2$ & $\PP^7$ & $2,4$ & $256$ & 35 & 36 & 783
  \\
\hline
24 & $2$ & $\PP^7$ & 3,3 & $288$ & 36 & 16 & 410
 \\
    \hline
25  & $2$ & $\PP^8$ & 2,2,3 & $384$ & 43 & 11 & 316
\\
    \hline
26  & $2$ & $\PP^9$ & 2,2,2,2 & $512$ & 51 & 4 & 159
\\
  \hline\hline
27   & $3$ & $\PP(1^6,3)$ & $6$ & $486$ & 57 & 56 & 1246
 \\
  \hline
28 & $3$ & $\PP^6$ & $4$ & $972$ & 84 & 7 & 266
 \\
  \hline
29  & $3$ & $\PP^7$ & 2,3 & $1458$ & 111 & 1 & 83
\\
  \hline
30  & $3$ & $\PP^8$ & 2,2,2 & $1944$ & 138 & 0 & 27
\\
  \hline\hline
31   & $4$ & $\PP(1^5,2,3)$ & $6$ & $1024$ & 91 & 16 & 505
 \\
  \hline
32   & $4$ & $\PP(1^6,2)$ & $4$ & $2048$ & 147 & 1 & 90
 \\
  \hline
33 & $4$ & $\PP^6$ & $3$ & $3072$ & 203 & 0 & 21
 \\
  \hline
34  & $4$ & $\PP^7$ & $2,2$ & $4096$ & 259 & 0 & 3
 \\
  \hline\hline
35 & $5$ & $\PP^6$ & $2$ & $6250$ & 378 & 0 & 0
 \\
  \hline
\caption[]{\label{table:Fano-dim-5} Fivefold Fano weighted complete intersections.}
\end{longtable}
\end{center}

Similarly to Remark~\ref{remark:linear-cone}, the numerical data listed in Table~\ref{table:Fano-dim-5}
does not describe every variety
in the corresponding deformation family, but only a general one.

Looking at the anticanonical degrees and dimensions of anticanonical linear systems
of varieties from Table~\ref{table:Fano-dim-5}, we see that varieties from different
families are never isomorphic to each other.

The information
concerning rationality of varieties listed in Table~\ref{table:Fano-dim-5}
that we are aware of is summarized in Table~\ref{table:Fano-dim-5-rationality},
with the notation similar to that of Table~\ref{table:Fano-dim-4-rationality}.

\def\arraystretch{1.4}
\begin{center}
\begin{longtable}{||c|c|c|c|c||}
  \hline
  No. & Not stably rational & Non-rational & Rational & Reference
  \\
  \hline
  \hline
\endhead
 $2$ & very general & very general & & \cite[Theorem~1.1]{Okada1}\\
 \hline
 $5$ & very general & general & & \cite{Totaro}, \cite{Schreieder}, \cite{Pukhlikov1}\\
 \hline
 $9$ &  & some &  & \cite{Pukhlikov2}\\
 \hline
 $15$ & very general & very general & & \cite[Theorem~1.3]{Okada1}\\
 \hline
 $17$ & very general & very general &  & \cite[Theorem~1.1]{Okada1}\\
 \hline
 $18$ & very general & very general & & \cite[Theorem~1.3]{Okada1}\\
 \hline
 $21$ & very general & very general &  & \cite{Schreieder}\\
 \hline
 $27$ & very general & very general &  & \cite[Theorem~1.1]{Okada1}\\
 \hline
 $30$ & none & none & all & \cite[Corollary~5.1]{Tyurin} \\
 \hline
 $34$ & none & none & all & 
 projection from a line \\
 \hline
 $35$ & none & none & all & 
 projection from a point \\
 \hline
 \caption[]{\label{table:Fano-dim-5-rationality} Rationality for fivefold Fano weighted complete intersections.}
\end{longtable}
\end{center}

It would be interesting to study birational geometry of weighted complete intersections
from Tables~\ref{table:Fano-dim-4} and \ref{table:Fano-dim-5} that are not covered by
Tables~\ref{table:Fano-dim-4-rationality} and~\ref{table:Fano-dim-5-rationality}.
Also, it would be interesting to study automorphism groups of Fano varieties from Tables~\ref{table:Fano-dim-4} and \ref{table:Fano-dim-5}, cf.~\mbox{\cite[\S\,A.2]{ProkhorovShramov}}. In particular, it would be interesting to find weighted Fano complete intersections
acted on by relatively large automorphism groups, cf.~\cite{PS16}.

Using the list of index $1$ Fano fivefolds provided in Table~\ref{table:Fano-dim-5}, one can compile the list of
smooth well formed Calabi--Yau weighted complete intersections of dimension $4$
that are not intersections with linear cones (cf.~\cite{Og91}). Namely, if there is a smooth Calabi--Yau weighted complete intersection of multidegree~\mbox{$d_1,\ldots,d_c$} in $\P(a_0,\ldots,a_n)$, then a general complete intersection
of multidegree~\mbox{$d_1,\ldots,d_c$} in $\P(1,a_0,\ldots,a_n)$ is a smooth Fano variety.
Note that the converse also holds: if there is a smooth Fano normalized weighted complete intersection of multidegree~\mbox{$d_1,\ldots,d_c$} in $\P(a_0,\ldots,a_n)$ that has index~$1$, then it follows from~\cite[Theorem~1.2]{PST}
that $a_0=1$ and a general weighted complete intersection of multidegree
$d_1,\ldots,d_c$ in~\mbox{$\P(a_1,\ldots,a_n)$} is a smooth Calabi--Yau variety.
For other partial classification results concerning Calabi--Yau threefolds see \cite{AESZ05},
\cite{IMOU}, \cite{IIM}, \cite{Benedetti}, and references therein.

\appendix

\section{Optimization}
\label{section:optimization}

The purpose of this section is to prove some bounds on the values of linear
functions on special subsets of~$\R^m$.

Let $L$ be a real-valued function on a set $\Omega$. We say that
$L$ \emph{attains its maximum in $\Omega$} if $L$ is bounded and
there is a point $P$ in $\Omega$ such that $L(P)=\sup_{P'\in\Omega}L(P')$;
in this case we also say that $L$ attains its maximum in $\Omega$ at $P$.

\begin{lemma}\label{lemma:attains-preliminary}
Let $N$ and $c$ be positive integers, and $M$ be a real number.
Consider the closed subset of $\R^{N+c}$ with
coordinates~\mbox{$A_1,\ldots,A_N,D_1,\ldots, D_c$} defined by inequalities
\begin{equation*}
M\ge A_1\ge\ldots\ge A_N\ge 0, \quad D_1\ge\ldots\ge D_c\ge 0,
\end{equation*}
and let $\Omega'$ be a non-empty closed subset therein.
Put
$$
L(A_1,\ldots,A_N,D_1,\ldots,D_c)=\sum_{i=1}^N A_i-\sum_{j=1}^c D_j.
$$
Then the function $L$
attains its maximum in $\Omega'$.
\end{lemma}
\begin{proof}
Let~$\bar{P}$
be some point of $\Omega'$, and put $\bar{L}=L(\bar{P})$.
If $D_1>NM-\bar{L}$,
then
$$
L(A_1,\ldots,D_c)=\sum_{i=1}^N A_i-\sum_{j=1}^c D_j\le
NA_1-D_1< N(A_1-M)+\bar{L}\le \bar{L}.
$$
Thus $L$ attains its maximum in $\Omega'$ if and only if
it attains its maximum in the closed subset
$$
\Omega''=\Omega'\cap\{(A_1,\ldots,D_c)\mid D_1\le NM-\bar{L}\}
$$
containing $\bar{P}$.
It remains to notice that $\Omega''$ is a compact subset of $\R^{N+c}$.
\end{proof}

\begin{lemma}\label{lemma:attains}
Let $N$ and $c$ be positive integers such that $N>c$,
and $\alpha$ be a real number.
Let $\widetilde{\Omega}$ be a subset of $\R^{N+c}$ with
coordinates $A_1,\ldots,A_N,D_1,\ldots, D_c$ defined by inequalities
\begin{gather*}
A_1\cdot\ldots\cdot A_N\le D_1\cdot\ldots\cdot D_c,\\
A_1\ge\ldots\ge A_N\ge 1, \quad D_1\ge\ldots\ge D_c,\\
D_1\ge 2A_1, \ D_2\ge A_2,\ \ldots, \ D_c\ge A_c.
\end{gather*}
Put
$$
L(A_1,\ldots,A_N,D_1,\ldots,D_c)=\sum_{i=1}^N A_i-\sum_{j=1}^c D_j+\alpha.
$$
Let $\Omega\subset\widetilde{\Omega}$ be a non-empty
closed subset. Then the function $L$
attains its maximum in $\Omega$.
\end{lemma}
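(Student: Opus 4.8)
The plan is to show that $L$ attains its maximum on $\Omega$ by exhibiting a compact subset $\Omega'\subset\Omega$ on which $\sup_\Omega L$ is already achieved, and then invoke continuity of $L$ together with compactness. Since $L$ is a continuous (indeed linear plus constant) function, it suffices to produce a nonempty \emph{compact} subset $K\subset\Omega$ with $\sup_K L=\sup_\Omega L$; continuity then hands us a maximizer in $K\subset\Omega$. So the real content is to bound the feasible region in the directions that matter.

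First I would fix any base point $P_0\in\Omega$ and set $\ell_0=L(P_0)$, so that the supremum we care about is at least $\ell_0$; it then suffices to work inside the slice $\Omega\cap\{L\ge\ell_0\}$, which is still closed and nonempty. On this slice I claim all coordinates are bounded. The key inequality is the multiplicative constraint $A_1\cdots A_N\le D_1\cdots D_c$ combined with the pairing $D_1\ge 2A_1$, $D_j\ge A_j$ for $2\le j\le c$ and the orderings. Write $N=c+(N-c)$ with $N-c\ge 1$. Because $A_i\ge 1$ for all $i$ and $A_i\le A_c$ for $i\ge c$, the product $A_1\cdots A_N$ is at least $A_1\cdots A_c$ and at most $(A_1\cdots A_c)\cdot A_c^{\,N-c}$; in any case $D_1\cdots D_c\ge A_1\cdots A_N\ge A_1\cdots A_c$. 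Now $L\ge\ell_0$ reads
\[
\sum_{i=1}^N A_i-\sum_{j=1}^c D_j\ge \ell_0-\alpha.
\]
Group the sum as $\sum_{j=1}^c(A_j-D_j)+\sum_{i=c+1}^N A_i$. Each term $A_j-D_j\le 0$, and in fact $A_1-D_1\le -A_1$ since $D_1\ge 2A_1$; so $\sum_{i=c+1}^N A_i\ge \ell_0-\alpha+A_1+\sum_{j=2}^c(D_j-A_j)\ge \ell_0-\alpha+A_1$. Since each $A_i$ with $i>c$ satisfies $1\le A_i\le A_c\le A_1$, the left side is at most $(N-c)A_1$, giving $(N-c)A_1\ge \ell_0-\alpha+A_1$, hence $A_1\le \frac{\ell_0-\alpha}{N-c-1}$ when $N-c\ge 2$, and a direct contradiction-free bound when $N-c=1$ forces $A_1\le\ell_0-\alpha$ outright (here one uses $N-c\ge1$, i.e. $N>c$). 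Thus $A_1$, and therefore every $A_i$, is bounded above by a constant $M_A$ depending only on $N,c,\alpha,\ell_0$. Feeding $A_1\cdots A_N\le M_A^N$ back into the product constraint together with $D_j\ge 1$ (which follows from $D_c\ge A_c\ge 1$) and $D_1\ge\dots\ge D_c$ bounds $D_1\le M_A^N$, hence all $D_j\le M_A^N=:M_D$.

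Therefore $\Omega\cap\{L\ge\ell_0\}$ is contained in the box $[1,M_A]^N\times[1,M_D]^c$, which is compact; intersecting the closed set $\Omega\cap\{L\ge\ell_0\}$ with this box (it is already inside) shows $\Omega\cap\{L\ge\ell_0\}$ is itself compact and nonempty. On a compact set the continuous function $L$ attains its maximum at some point $P^\ast$, and since every point of $\Omega$ with $L\ge\ell_0$ lies in this set while every other point has $L<\ell_0\le L(P^\ast)$, the value $L(P^\ast)$ is the maximum of $L$ over all of $\Omega$. This completes the proof. The only delicate step is the case bookkeeping when $N-c=1$ versus $N-c\ge 2$ in the bound on $A_1$; everything else is routine estimation using $A_i\ge 1$ and the pairing inequalities.
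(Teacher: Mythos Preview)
Your argument has a sign error at the crucial step, and the error is not repairable within the approach you take. From $\sum_{i=c+1}^N A_i \ge (\ell_0-\alpha)+A_1$ and $\sum_{i=c+1}^N A_i\le (N-c)A_1$ you correctly obtain $(N-c)A_1\ge (\ell_0-\alpha)+A_1$, i.e.\ $(N-c-1)A_1\ge \ell_0-\alpha$. This is a \emph{lower} bound on $A_1$, not an upper bound; your conclusion $A_1\le \frac{\ell_0-\alpha}{N-c-1}$ has the inequality reversed. (The case $N-c=1$ then yields only $0\ge \ell_0-\alpha$, again no bound on $A_1$.) The same reversal appears in your bound on $D_1$: the constraint $A_1\cdots A_N\le D_1\cdots D_c$ bounds $\prod D_j$ from \emph{below}, so an upper bound on the $A_i$ yields no upper bound on $D_1$ that way.

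The first error is not a slip that can be patched locally. Your derivation of the $A_1$-bound uses only the linear constraints $D_1\ge 2A_1$, $D_j\ge A_j$, and the orderings --- the product inequality is never invoked. But on the region cut out by those linear constraints alone $L$ is unbounded whenever $N>c+1$: take $A_1=\dots=A_N=t$, $D_1=2t$, $D_2=\dots=D_c=t$, so that $L=(N-c-1)t+\alpha\to\infty$. Hence no argument that ignores the product constraint can produce an upper bound on $A_1$. The paper's proof uses it essentially: after first arranging that $A_{c+1}$ is bounded below by a constant multiple of $A_1$ on the relevant super-level set, one gets $D_1^c\ge A_1\cdots A_{c+1}\ge (\mathrm{const})\cdot A_1^{\,c+1}$, so $D_1$ grows like $A_1^{(c+1)/c}$, and the super-linear term $-D_1$ forces $L\to -\infty$ as $A_1\to\infty$. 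You need some version of this idea. (Once $A_1$ is genuinely bounded by some $M_A$, the $D_j$ are easily bounded via the sum: $\sum D_j\le \sum A_i-(\ell_0-\alpha)\le N M_A-(\ell_0-\alpha)$; that fixes your second error.)
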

\begin{proof}
It is enough to prove the assertion for $\alpha=0$.
Let $\bar P$
be some point of $\Omega$, and put~\mbox{$\bar L=L(\bar P)$}.

Suppose that $(A_1,\ldots,D_c)\in\Omega$.
If $A_{c+1}<(A_1+\bar L)\cdot (N-c)^{-1}$, then
\begin{multline*}
L(A_1,\ldots,D_c)=\sum_{i=1}^N A_i-\sum_{j=1}^c D_j=
(A_1-D_1)+\sum_{j=2}^c(A_j-D_j)+\sum_{i=c+1}^N A_i\le
\\ \le -A_1+(N-c)A_{c+1}<-A_1+A_1+\bar L=\bar L.
\end{multline*}
Thus $L$ attains its maximum in $\Omega$ if and only if
it attains its maximum in the closed subset
$$
\Omega^{(1)}=\Omega\cap\left\{(A_1,\ldots, D_c)\mid A_{c+1}\ge\frac{A_1+\bar L}{N-c}
\right\}
$$
containing $\bar P$.

By Lemma~\ref{lemma:attains-preliminary}, the function $L$ attains its maximum in
the closed subset
$$
\Omega'=\Omega^{(1)}\cap\{(A_1,\ldots,D_c)\mid A_1\le -\bar L\}
$$
provided that $\Omega'$ is not empty.
Therefore, to prove that $L$ attains its maximum in $\Omega^{(1)}$ it is enough to show
that either $L$ attains its maximum
in the closed subset
$$
\Omega^{(2)}=\Omega^{(1)}\cap\{(A_1,\ldots,D_c)\mid A_1\ge -\bar L\},
$$
or $L(P)<\bar L$ for every point $P\in\Omega^{(2)}$. The latter case takes place
in particular when~$\Omega^{(2)}$ is empty.

Suppose that $\Omega^{(2)}$ is not empty, and let $(A_1,\ldots,D_c)\in\Omega^{(2)}$. Then
$$
D_1^c\ge D_1\cdot\ldots\cdot D_c\ge A_1\cdot\ldots\cdot A_N\ge
A_1\cdot\ldots\cdot A_{c+1}\ge \left(\frac{A_1+\bar L}{N-c}\right)^{c+1},
$$
so that
$$
D_1\ge \left(\frac{1}{N-c}\right)^{\frac{c+1}{c}}\cdot
(A_1+\bar L)^{\frac{c+1}{c}}.
$$
One has
\begin{multline}\label{eq:M}
L(A_1,\ldots,D_c)=\sum_{i=1}^N A_i-\sum_{j=1}^c D_j\le N\cdot A_1-D_1
=N(A_1+\bar L)-D_1-N\bar L\le\\
\le (A_1+\bar L)\cdot \left(N-
\left(\frac{1}{N-c}\right)^{\frac{c+1}{c}}\cdot
(A_1+\bar L)^{\frac{1}{c}}\right)-N\bar L.
\end{multline}
Put
$$
M=\max\left\{|(N+1)\bar L|-\bar L, (N+1)^c\cdot(N-c)^{c+1}-\bar L\right\}.
$$
If $A_1>M$, then the right hand side of~\eqref{eq:M}
is less than $\bar L$.
Thus to complete the proof it is enough to show that either $L$ attains its maximum in the closed subset
$$
\Omega^{(3)}=\Omega^{(2)}\cap\{(A_1,\ldots, D_c)\mid A_1\le M\},
$$
or $\Omega^{(3)}$ is empty. Now everything follows from Lemma~\ref{lemma:attains-preliminary}.
\end{proof}

Similarly to Lemma~\ref{lemma:attains}, we prove
the following.

\begin{lemma}\label{lemma:bb2dd-attains}
Let $l$ and $r$ be positive integers such that
$l>r$, and let $\alpha$ be a real number. Let $\Omega\subset\R^2_{B,D}$ be defined by inequalities
$B^l\le 2D^r$ and $D\ge B\ge 1$.
Put
$$
L(B,D)=lB-rD+\alpha.
$$
Then the function $L$
attains its maximum in $\Omega$.
\end{lemma}
\begin{proof}
It is enough to prove the assertion for $\alpha=0$.
Let $\bar P$
be some point of $\Omega$, and put~\mbox{$\bar L=L(\bar P)$}.

Put
$$
M=\max\left\{-\frac{2\bar L}{r}, \left(\frac{4l}{r}\right)^{\frac{l}{l-r}}\right\}.
$$
Suppose that $(B,D)\in\Omega$ and $D>M$.
In particular, we have
$$
D^{\frac{r-l}{l}}<\frac{r}{4l}.
$$
We also know that $B\le \sqrt[l]{2}D^{\frac{r}{l}}$. Hence
$$
L(B,D)=lB-rD\le l\sqrt[l]{2}D^{\frac{r}{l}}-rD<\left(\frac{2l}{r}D^{\frac{r-l}{l}}-1\right)\cdot rD<
-\frac{r}{2}D<\bar L.
$$
Thus $L$ attains its maximum in $\Omega$ if and only if
it attains its maximum in the closed subset
$$
\Omega^{(1)}=\Omega\cap\left\{(B,D)\mid D\le M \right\}
$$
containing $\bar P$. Since $B\le D$ in $\Omega$, the set $\Omega^{(1)}$ is compact,
and the required assertion follows.
\end{proof}

The following theorem will be our main technical tool to find the points
where certain functions attain their maximal values. It is well known as
the \emph{method of Lagrange multipliers}, or sometimes the \emph{Kuhn--Tusker Theorem}.

\begin{theorem}[{see~\cite[p. 503]{Va92} or~\cite[Theorem M.K.2]{MWG95}}]
\label{theorem:Lagrange}
Let $G_1,\ldots,G_p$ be differentiable functions on
$\R^m$ with coordinates $x_1,\ldots,x_m$.
Let $\Omega\subset\R^m$ be a subset defined by
inequalities $G_i\le 0$, $1\le i\le p$. Let $L$ be a differentiable
function on $\R^m$. Suppose that $L$ attains its maximum
in $\Omega$ at a point $P$. Then
$$
\left(\frac{\partial L}{\partial x_1},\ldots,
\frac{\partial L}{\partial x_m}\right)[P]=
\sum\limits_{i=1}^p
\lambda_i\left(\frac{\partial G_i}{\partial x_1},\ldots,
\frac{\partial G_i}{\partial x_m}\right)[P]
$$
for some non-negative numbers $\lambda_i$. Moreover, if
for some $j$ one has $\lambda_j\neq 0$,
then~\mbox{$G_j(P)=0$}.
\end{theorem}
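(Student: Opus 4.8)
The plan is to reconstruct the classical proof of the Karush--Kuhn--Tucker necessary condition. Since $L$ attains its maximum over $\Omega$ at $P$, in particular $P$ is a local maximum of $L$ restricted to $\Omega$, which is all that will be used. First I would split the constraint indices into the \emph{active} ones, with $G_j(P)=0$, and the \emph{inactive} ones, with $G_j(P)<0$. By continuity an inactive $G_j$ stays negative on a whole neighbourhood of $P$, so it does not constrain $\Omega$ near $P$; we may therefore set $\lambda_j=0$ for every inactive $j$, and the implication ``$\lambda_j\neq 0\Rightarrow G_j(P)=0$'' becomes automatic. It then remains to show that $\nabla L(P)$ lies in the convex cone generated by the gradients $\nabla G_j(P)$ over the active indices.

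The core of the argument is a Farkas-type alternative. If $\nabla L(P)$ is \emph{not} in that cone, Farkas' lemma produces a vector $v\in\R^m$ with $\langle\nabla L(P),v\rangle>0$ and $\langle\nabla G_j(P),v\rangle\le 0$ for every active $j$. The point is then to turn $v$ into a genuine feasible ascent direction, i.e.\ to find a $C^1$ curve $\gamma$ with $\gamma(0)=P$, $\dot\gamma(0)$ close to $v$, $\gamma(t)\in\Omega$ and $L(\gamma(t))>L(P)$ for small $t>0$, contradicting maximality. Expanding to first order about $P$, with $G_j(P)=0$ on active indices, gives $L(\gamma(t))=L(P)+t\langle\nabla L(P),\dot\gamma(0)\rangle+o(t)$, which does increase, and $G_j(\gamma(t))=t\langle\nabla G_j(P),\dot\gamma(0)\rangle+o(t)$; the latter is negative for small $t>0$ whenever $\langle\nabla G_j(P),\dot\gamma(0)\rangle<0$, but when this inner product vanishes the sign is decided by higher-order terms and $\gamma$ need not stay inside $\Omega$. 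This is precisely where a constraint qualification is needed, and it is built into the cited sources (for instance linear independence of the active gradients, the Mangasarian--Fromovitz condition, or convexity of the $G_i$ together with a Slater point). Under such a hypothesis one can perturb $v$ slightly into the interior of the cone of admissible directions, or apply the implicit function theorem to the active constraints, to produce the required curve and obtain the contradiction. In the situations where the theorem is applied in~\S\ref{section:bound} the relevant constraints are a mixture of linear inequalities and smooth functions whose active gradients are visibly independent, so the qualification holds.

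I expect the constraint-qualification step to be the real obstacle: some such hypothesis is genuinely necessary, as the example $m=p=1$, $L(x)=x$, $G_1(x)=x^2$ shows (the maximum over $\Omega=\{0\}$ is at $P=0$, yet $\nabla L(0)=1$ is not a non-negative multiple of $\nabla G_1(0)=0$). A route that sidesteps the choice of qualification is to prove first the Fritz John form, in which $\nabla L(P)$ carries its own non-negative multiplier $\lambda_0$, with $(\lambda_0,\lambda_1,\ldots,\lambda_p)$ not all zero: one minimizes the penalized function $-L(x)+\|x-P\|^2+k\sum_j\big(\max(G_j(x),0)\big)^2$ over a fixed small closed ball around $P$, checks that the minimizers $x_k$ tend to $P$ and hence eventually lie in the interior of the ball, writes the vanishing of the gradient there, normalizes the resulting non-negative coefficients onto the unit sphere, and extracts a convergent subsequence to get the limiting multipliers; complementary slackness $\lambda_jG_j(P)=0$ falls out because $G_j(x_k)<0$ for large $k$ when $G_j(P)<0$. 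Dividing by $\lambda_0$ then yields the stated form, provided $\lambda_0\neq 0$ --- again a constraint qualification, trivially verified in the applications of~\S\ref{section:bound}.
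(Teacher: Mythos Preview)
The paper does not prove this theorem at all: it is stated with a reference to two microeconomics textbooks and then used as a black box throughout Appendix~\ref{section:optimization}. So there is no ``paper's own proof'' to compare against.

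Your reconstruction of the Karush--Kuhn--Tucker argument is correct and standard: split off the inactive constraints, apply Farkas' lemma to the active gradients, and use a feasible ascent curve to derive a contradiction. You are also right that the theorem as stated is literally false without a constraint qualification, and your counterexample $L(x)=x$, $G_1(x)=x^2$ is the canonical one; the cited textbook versions carry such a hypothesis (Mas--Colell--Whinston--Green state it under an explicit rank condition on the active gradients). Your Fritz--John alternative via penalization is also correct and is the cleanest way to get the result with no qualification, at the price of the extra multiplier $\lambda_0$.

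One small correction: the theorem is applied in Appendix~\ref{section:optimization}, not in \S\ref{section:bound}. In those applications (Lemmas~\ref{lemma:bb2dd}, \ref{lemma:aabb2dd}, \ref{lemma:aa11ddd}, \ref{lemma:aabbdd}, and Proposition~\ref{proposition:Lagrange}) the authors proceed by first ruling out the boundary cases where several constraints coincide, so that at the point under consideration the remaining active constraints have linearly independent gradients; hence the qualification is indeed satisfied, as you anticipated.
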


To proceed we need to establish some elementary inequalities
that will be used in the proof of Proposition~\ref{proposition:Lagrange}.

\begin{lemma}\label{lemma:bb2dd}
Let $l\ge 1$ be an integer, and $r$ be a non-negative integer.
Let $\Omega\subset\R^2_{B,D}$ be defined by inequalities
$B^l\le 2D^r$ and $D\ge B\ge 1$.
Put
$$
L(B,D)=lB-rD-(l+1).
$$
Then $L$ is non-positive on $\Omega$. Moreover, $L(B,D)$ is negative unless $l=1$, $r=0$, and~\mbox{$B=2$}.
\end{lemma}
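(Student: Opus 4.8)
The plan is to handle the two cases $r = 0$ and $r \ge 1$ separately, since they have quite different flavours. When $r = 0$, the constraint $B^l \le 2D^r$ reads simply $B^l \le 2$, so $B \in [1, 2^{1/l}]$, and $L(B,D) = lB - (l+1)$ depends only on $B$. Then $L(B,D) \le l \cdot 2^{1/l} - (l+1)$, and I would check that $l \cdot 2^{1/l} \le l+1$ for all integers $l \ge 1$, with equality only when $l = 1$ (where $2^{1/l} = 2$ and $L(2,D) = 0$); for $l \ge 2$ one has $2^{1/l} < 1 + 1/l$ strictly, for instance by the binomial estimate $(1+1/l)^l \ge 1 + l\cdot(1/l) = 2$ with strict inequality when $l \ge 2$, so $2^{1/l} < 1+1/l$ and hence $l\cdot 2^{1/l} < l+1$. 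This disposes of $r = 0$ and isolates the single extremal point claimed.

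For the case $r \ge 1$, the domain $\Omega$ is non-empty and $L$ is affine, but $\Omega$ is unbounded in the $D$-direction; however, since $r \ge 1$, increasing $D$ only decreases $L$, so it suffices to bound $L$ on the boundary portion $B^l = 2D^r$ (equivalently, the maximum over $\Omega$ is attained, by an argument like the one in Lemma~\ref{lemma:attains}, or one can simply note that for fixed feasible $B$ the best $D$ is the smallest one, namely $D = (B^l/2)^{1/r}$, which automatically satisfies $D \ge B$ as long as $B \ge 1$ and $l \ge r$ — this last point needs a small check, see below). On that boundary one is maximizing the single-variable function $\phi(B) = lB - r(B^l/2)^{1/r} - (l+1)$ over $B \ge 1$. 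I would compute $\phi'(B) = l - l B^{l-1}(B^l/2)^{1/r - 1}\cdot \tfrac12 = l\bigl(1 - \tfrac{1}{2}B^{l-1}(B^l/2)^{(1-r)/r}\bigr)$, locate the unique critical point, and evaluate; alternatively, and more in the spirit of the appendix, one can invoke Theorem~\ref{theorem:Lagrange} with $G_1 = B^l - 2D^r$, $G_2 = B - D$, $G_3 = 1 - B$ to pin down where the maximum of $L$ on $\Omega$ can occur, and then check that in each case $L \le 0$ with strict inequality.

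The one subtlety I anticipate is the compatibility of the constraints $D \ge B$ and $B^l \le 2D^r$: a priori these could force $D$ strictly above $(B^l/2)^{1/r}$, changing the analysis. But at a maximum of $L$ one wants $D$ as small as possible, so the active constraint is whichever of $D = B$ or $D = (B^l/2)^{1/r}$ is larger; on the locus $D = B$ the inequality $B^l \le 2B^r$ forces $B^{l-r} \le 2$, i.e. $B \le 2^{1/(l-r)}$ when $l > r$ (and $l \ge r$ always holds in the intended application, though the lemma as stated does not assume it — if $l < r$ then $D = B$ is only feasible for $B=1$, and if $l = r$ it is feasible for all $B \le 2^{1/?}$... here one simply notes $B \le$ a bounded range), and there $L = (l-r)B - (l+1) \le (l-r)2^{1/(l-r)} - (l+1)$, which by the $r=0$ estimate applied with $l-r$ in place of $l$ is $\le (l-r+1) - (l+1) = -r \le -1 < 0$. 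On the other locus $D = (B^l/2)^{1/r}$ the estimate of the previous paragraph applies. The main obstacle is thus just the careful bookkeeping of which constraint is active and checking strictness of each inequality; there is no deep content, only a two-region single-variable optimization with the extremal point $l=1$, $r=0$, $B=2$ singled out.
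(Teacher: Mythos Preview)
Your plan is sound and would yield a complete proof with a bit more care, but it differs in structure from the paper's argument. The paper splits first on $l=1$ versus $l\ge 2$ (not on $r=0$ versus $r\ge 1$), then for $l\ge 2$ invokes Lemma~\ref{lemma:attains} to guarantee a maximizer $P=(B,D)$ and systematically eliminates the boundary cases $B=1$, $l\le r$, $r=0$, and $B=D$ one by one; in the remaining interior situation ($l>r\ge 1$, $B>1$, $D>B$) it applies Theorem~\ref{theorem:Lagrange} directly and derives the contradiction $B/D=1$ from the multiplier equations. Your approach instead fixes $B$, minimizes over $D$ (legitimate since $r\ge 1$ makes $L$ strictly decreasing in $D$), and reduces to a one-variable problem on the active constraint $D=\max\bigl(B,(B^l/2)^{1/r}\bigr)$. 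This is more elementary and avoids Lagrange multipliers entirely; the paper's route is more uniform with the rest of Appendix~\ref{section:optimization}, where the same Lagrange-multiplier template is reused in Lemmas~\ref{lemma:aabb2dd}, \ref{lemma:aa11ddd}, and~\ref{lemma:aabbdd}.

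One concrete slip: your parenthetical ``if $l<r$ then $D=B$ is only feasible for $B=1$'' is backwards. When $l<r$ and $D=B$, the constraint $B^l\le 2B^r$ reads $B^{r-l}\ge 1/2$, which holds for \emph{all} $B\ge 1$; so $D=B$ is feasible throughout, and in fact it is the minimizing choice of $D$ for every $B$ (since $(B^l/2)^{1/r}\le B$ there). This does not break your argument, because on that locus $L(B,B)=(l-r)B-(l+1)\le -(r+1)<0$ anyway, but the remark as written is wrong. Similarly, the $l=r$ aside is garbled; there $L(B,B)=-(l+1)$ is constant and negative. Once these are tidied, your Case~B computation $\phi'(B)=l\bigl(1-D(B)/B\bigr)$ shows that on the locus $D=(B^l/2)^{1/r}\ge B$ the function $\phi$ is nonincreasing, so its maximum occurs where the two constraints meet, reducing everything to your Case~A bound $(l-r)\cdot 2^{1/(l-r)}-(l+1)\le -r<0$.
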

\begin{proof}
Suppose that $l\le r$. Then
$$
L(B,D)=lB-rD-(l+1)\le (l-r)B-(l+1)<0
$$
for all $(B,D)\in\Omega$.
Thus we will assume that $l>r$.

Suppose that $l=1$. Then $r=0$, so that $B\le 2$
$$
L(B,D)=B-2\le 0
$$
for all $(B,D)\in\Omega$.
Moreover, in this case $L(B,D)=0$ if and only if $B=2$.
Thus we will assume that $l\ge 2$.

Suppose that $r=0$. Then $B^l\le 2$ in $\Omega$. Since $l\ge 2$, this implies that $B<1+\frac{1}{l}$.
The latter gives
$$
L(B,D)=lB-rD-(l+1)<l\cdot\left(1+\frac{1}{l}\right)-(l+1)=0.
$$
Thus we will assume that $r>0$.

The function $L$ attains its maximum
in $\Omega$ at some point $P$ by Lemma~\ref{lemma:bb2dd-attains}.
Abusing notation a little bit, we write $P=(B,D)$ and put $M=L(P)$.

If $B=1$, then
$$
M=l-rD-(l+1)=-rD-1<0.
$$
Thus we will assume that $B>1$.

Suppose that $B=D$. Then $B^{l-r}\le 2$,
which implies $B\le 1+\frac{1}{l-r}$.
This gives
$$
M=lB-rD-(l+1)=(l-r)B-(l+1)\le (l-r)\cdot\left(1+\frac{1}{l-r}\right)-(l+1)=-r<0.
$$
Thus we will assume that $D>B$.

By Theorem~\ref{theorem:Lagrange}
applied to $L$ and
$$
G_1=B^l-2D^r, \ G_2=B-D,\ G_3=1-B,
$$
one has
\begin{equation}\label{eq:bb2dd-Lagrange}
(l,-r)=
\lambda (lB^{l-1},-2rD^{r-1})
\end{equation}
for some non-negative number $\lambda$.
This implies that
$$
\lambda=\frac{1}{B^{l-1}},
$$
so that $\lambda$ is positive, and thus $B^l=2D^r$ by Theorem~\ref{theorem:Lagrange}.
Also, since $r>0$, equation~\eqref{eq:bb2dd-Lagrange} implies that
$$
1=2D^{r-1}\cdot\lambda=\frac{2D^{r-1}}{B^{l-1}}=\frac{B}{D}<1,
$$
which is a contradiction.
\end{proof}

\begin{lemma}\label{lemma:aabb2dd}
Let $k$ be a positive integer, $l$ be a non-negative integer, and
$p$ be a non-negative real number such that $k+l+p>2$; let $r$ be a non-negative integer.
Let $\Omega\subset\R^3_{A,B,D}$ be defined by inequalities
$A^{k-1}B^l\le 2D^r$, $A\ge B\ge 1$, $A\ge k+l+p$, $D\ge B$, $2A\ge D$,
and an additional inequality $D\ge A$ in the case when $k\ge 2$.
Put
$$
L(A,B,D)=(k-2)A+lB-rD+p.
$$
Then $L$ is non-positive on $\Omega$. Moreover, $L(A,B,D)$ is negative unless $A=p+2$ and~\mbox{$p>0$}.
\end{lemma}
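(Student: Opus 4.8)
The strategy mirrors the proof of Lemma~\ref{lemma:bb2dd}: first dispose of degenerate cases by hand, then in the remaining range use Lemma~\ref{lemma:attains} to guarantee that $L$ attains its maximum at some point $P=(A,B,D)\in\Omega$, and finally apply the Lagrange multiplier criterion (Theorem~\ref{theorem:Lagrange}) to the active constraints at $P$ to force a contradiction unless $P$ is the claimed extremal point. The key extra ingredient here, compared to Lemma~\ref{lemma:bb2dd}, is the linear constraint $A\ge k+l+p$; at the optimum this constraint will typically be the one that is active, and the analysis splits according to which constraints carry nonzero multipliers.

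\textbf{Step 1: easy reductions.} If $k=1$ then $L=-A+lB-rD+p\le -A+lB\le -(1)+l\cdot A$... more carefully: when $k=1$ we have $L(A,B,D)=-A+lB-rD+p$, and since $A\ge B$ and $r\ge 0$ this gives $L\le (l-1)B+p$; one checks using $A\ge k+l+p=1+l+p$ and $A\ge B$ that $L< 0$ when $l\ge 1$, while if $l=0$ (so $p>1$) we get $L=-A+p\le -(1+p)+p=-1<0$. If $k=2$ then $L=lB-rD+p$ with the extra constraint $D\ge A\ge B$, so $L\le (l-r)B+p$ if $l\ge r$ and one uses $B\le A$, $A\ge 2+l+p$ (when $l\ge 1$) or handles $l=0$ (forcing $p>0$, so $L=-rD+p\le p-r\cdot 1$, and then the constraint $B^0=1\le 2D^r$ is vacuous, so one must argue via $D\ge A\ge 2+p$, giving $L\le p-r(2+p)<0$ unless $r=0$, in which case $L=p>0$ and $A=p+2$ forced by... no: $A$ is only bounded below). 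This shows the case $k=2$, $l=0$, $r=0$ is exactly where $L=p$ can be positive, and there $A=p+2$ is the minimal allowed value but $A$ can be larger — so I must double check the role of $A$: since $L$ does not involve $A$ when $k=2$, the statement's claim ``$L$ negative unless $A=p+2$ and $p>0$'' indicates $k=2,l=0,r=0$ is genuinely the equality case and the phrase pins down which boundary point realizes $L>0$. I will handle $k\ge 3$ (so $k-2\ge 1$) and the remaining $k=2$ subcases by the optimization method.

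\textbf{Step 2: existence of the maximum and Lagrange analysis.} For $k\ge 3$ the coefficient of $A$ in $L$ is positive, so the defining inequalities of $\Omega$ (a polynomial inequality $A^{k-1}B^l\le 2D^r$ together with the chain $A\ge B\ge 1$, $D\ge B$, and possibly $D\ge A$) fit the pattern of Lemma~\ref{lemma:attains} after substituting enough copies of the variables, so $L$ attains its maximum at some $P=(A,B,D)$. I dispose of the sub-extremal boundary cases first: if $A=k+l+p$ exactly, one substitutes and checks $L\le 0$ directly using $A^{k-1}B^l\le 2D^r$; if $B=1$ or $A=B$ or $D=B$ or $D=A$, in each case the binding relation collapses $A^{k-1}B^l\le 2D^r$ to a bound of the form (power of $A$)$\,\le 2$, which combined with $A\ge k+l+p>2$ is immediately contradictory for the relevant exponent, or forces $L<0$. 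In the interior-of-those-faces situation, Theorem~\ref{theorem:Lagrange} applied with the single active constraint $A^{k-1}B^l- 2D^r\le 0$ (the only smooth one left) gives
$$
\big((k-2),\,l,\,-r\big)=\lambda\big((k-1)A^{k-2}B^l,\ lA^{k-1}B^{l-1},\ -2rD^{r-1}\big)
$$
for some $\lambda>0$. Comparing the first two entries forces $(k-2)/(k-1)=A/\big(\text{something}\big)$-type relations; comparing with the third (when $r>0$) and using $B^l A^{k-1}=2D^r$ yields $B/D=1$ or an analogous ratio exceeding its allowed bound, contradicting $D>B$ (or $D\ge A>$ the relevant quantity). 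When $r=0$ the constraint reads $A^{k-1}B^l\le 2$, impossible since $A>2$ and $k-1\ge 1$. This is exactly the mechanism of Lemma~\ref{lemma:bb2dd}, adapted to three variables.

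\textbf{Main obstacle.} The bookkeeping of which constraints are active at $P$ is more delicate than in Lemma~\ref{lemma:bb2dd} because there are now up to five inequalities and the new linear constraint $A\ge k+l+p$ can be active simultaneously with the polynomial one, producing a Lagrange relation with two nonzero multipliers. The hard part is showing that when the linear constraint is active (so $A=k+l+p$) the resulting point still gives $L\le 0$ — this is the ``boundary'' computation flagged above — and that when it is inactive the purely-polynomial Lagrange relation is self-contradictory exactly as in the two-variable lemma. Once those two alternatives are closed off, the only surviving possibility is the degenerate branch $k=2$, $l=r=0$, $A=p+2$, $p>0$, which is the asserted equality case.
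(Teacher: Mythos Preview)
Your overall architecture---dispose of small $k$ by hand, then for large $k$ use Lemma~\ref{lemma:attains} plus Theorem~\ref{theorem:Lagrange}---is the same as the paper's. The trouble is entirely in Step~1, and it is a real gap, not just exposition.

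You misidentify the equality case. For $k\ge 2$ the value $r=0$ is impossible on $\Omega$: the constraint would read $A^{k-1}B^l\le 2$, but $A\ge k+l+p>2$ and $k-1\ge 1$ force $A^{k-1}>2$. So the branch ``$k=2$, $l=0$, $r=0$'' you spend time on has empty domain, and your worry that ``$L=p$ can be positive'' there is vacuous. The genuine equality case lives in $k=1$: there the constraint becomes $B^l\le 2D^r$ and one has
$$
L=-A+lB-rD+p\le -(k+l+p)+lB-rD+p=lB-rD-(l+1),
$$
which is exactly the function treated in Lemma~\ref{lemma:bb2dd}. That lemma gives $L\le 0$, with equality only when $l=1$, $r=0$, $B=2$, and in that situation $L=p+2-A$, forcing $A=p+2$ and (since $k+l+p>2$) $p>0$. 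Your ad hoc bound $L\le (l-1)B+p$ neither proves negativity nor locates this case.

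The remaining boundary subcases for $k\ge 2$ (namely $l=0$, $B=1$, $A=B$, $D=A$) should likewise be reduced to Lemma~\ref{lemma:bb2dd} by rewriting $L$ as a sum $\big((k'-\text{something})\cdot(\text{variable})-rD-(\ldots)\big)+(k+l+p-A)$; you assert these collapses but do not carry them out, and the sketch ``forces $L<0$'' hides the actual work. Finally, in the Lagrange step you drop the multiplier for the constraint $A\ge k+l+p$; the paper keeps a term $\lambda_2(-1,0,0)$ and then reads off the contradiction purely from the $B$- and $D$-coordinates (which are unaffected by $\lambda_2$), so you never have to handle the case $A=k+l+p$ separately. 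Your plan to treat $A=k+l+p$ as its own boundary face is workable in principle, but you do not actually verify $L\le 0$ there, and doing so again amounts to invoking Lemma~\ref{lemma:bb2dd}.
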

\begin{proof}
The function $L$ attains its maximum
in $\Omega$ at some point $P$.
To see this apply Lemma~\ref{lemma:attains}
with $c=r+1$, an arbitrary $N\ge\max\{k+l,c+1\}$, and the closed subset
defined by conditions
\begin{multline*}
A_1=\ldots=A_k,\quad A_{k+1}=\ldots=A_{k+l},\quad A_{k+l+1}=\ldots=A_N=1,\\
D_1=2A_1,\quad D_2=\ldots=D_c, \quad A_1\ge k+l+p.
\end{multline*}
Abusing notation, we write $P=(A,B,D)$ and put $M=L(P)$.

If $k=1$, we have $B^l\le 2D^r$,
so that
$$
M=lB-rD+(p-A)\le lB-rD-(l+1)\le 0
$$
by Lemma~\ref{lemma:bb2dd}. Moreover, if $M=0$, then $l=1$, $r=0$, and $B=2$, so $M=p+2-A$ and $A=p+2$.
In particular, condition $k+l+p>2$ implies that $p>0$.
Thus we will assume that $k\ge 2$.

Note that $r>0$. Indeed, otherwise one has
$$
2\ge A^{k-1}B^l\ge A^{k-1}\ge A>2,
$$
which is absurd.

If $l=0$, then $A^{k-1}\le 2D^r$, so that
$$
M=(k-2)A-rD+p=((k-1)A-rD-k)+(k+p-A)< 0
$$
by Lemma~\ref{lemma:bb2dd}.
Thus we will assume that $l>0$.

If $B=1$ then $A^{k-1}\le 2D^r$, so that
$$
M=(k-1)A+l-rD-A+p=
((k-1)A-rD-k)+(k+l+p-A)< 0
$$
by Lemma~\ref{lemma:bb2dd}.
Thus we will assume that $B>1$.

If $A=B$ then $A^{k+l-1}\le 2D^r$, so that
$$
M=(k+l-2)A-rD+p=((k+l-1)A-rD-(k+l))+(k+l+p-A)< 0
$$
by Lemma~\ref{lemma:bb2dd}.
Thus we will assume that $A>B$. In particular, we have $D>B$.

Suppose that $D=A$. If $r<k-1$, then $A^{k-1-r}B^l\le 2$, so that
$A\le 2<k+l+p$, a contradiction. Hence $r\ge k-1$, and
$B^l\le 2D^{r-k+1}$. We have
$$
M=lB+(k-r-2)D+p=\left(lB-(r-k+1)D-(l+1)\right)+(l+p+1-D)<0
$$
by Lemma~\ref{lemma:bb2dd}.
Thus we will assume that $D>A$.

By Theorem~\ref{theorem:Lagrange}
applied to $L$ and
$$
G_1=A^{k-1}B^l-2D^r ,\ G_2=k+l+p-A ,\ G_3=B-A,\ G_4=1-B,\ G_5=B-D,\ G_6=A-D,
$$
one has
\begin{equation}\label{eq:aabb2dd-Lagrange}
(k-2,l,-r)=
\lambda_1 ((k-1)A^{k-2}B^l,lA^{k-1}B^{l-1},-2rD^{r-1})+\lambda_2(-1,0,0)
\end{equation}
for some non-negative numbers $\lambda_1$ and $\lambda_2$.
Since $l>0$, equation~\eqref{eq:aabb2dd-Lagrange} implies that
$$
\lambda_1=\frac{1}{A^{k-1}B^{l-1}},
$$
so that $\lambda_1$ is positive, and hence $A^{k-1}B^l=2D^r$ by Theorem~\ref{theorem:Lagrange}.
Finally, since $r>0$, equation~\eqref{eq:aabb2dd-Lagrange} implies that
$$
1=2D^{r-1}\cdot\lambda_1=\frac{2D^{r-1}}{A^{k-1}B^{l-1}}=\frac{B}{D}<1,
$$
which is a contradiction.
\end{proof}

\begin{lemma}\label{lemma:aa11ddd}
Let $k$ and $c$ be positive integers, and $l$ be a non-negative
real number such that $k+l>2$.
Let $\Omega$ be a subset of $\R^2$ with coordinates $A,D$
defined by inequalities
$A^k\le D^c$, $D\ge 2A$, and $A\ge k+l$.
Put
$$L(A,D)=kA-cD+l.$$
Then $L$ is negative on $\Omega$.
\end{lemma}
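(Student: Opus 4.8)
The plan is to mimic the structure of Lemmas~\ref{lemma:bb2dd} and~\ref{lemma:aabb2dd}: first establish that $L$ attains its maximum on $\Omega$ at some point $P$, then rule out the boundary configurations one by one, and finally apply the method of Lagrange multipliers (Theorem~\ref{theorem:Lagrange}) at an interior-type point to reach a contradiction. For the existence of a maximum I would invoke Lemma~\ref{lemma:attains}, specializing the coordinates $A_1,\ldots,A_N,D_1,\ldots,D_c$ so that $A_1=\ldots=A_k=A$, the remaining $A_i$ equal $1$, $D_1=2A$, $D_2=\ldots=D_c$, with the extra closed condition $A_1\ge k+l$ (which is a closed condition, hence $\Omega$ is a closed subset of the relevant $\widetilde\Omega$); here one chooses $N$ large, say $N\ge\max\{k,c+1\}$. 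Write $P=(A,D)$ and $M=L(P)$.

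Next I would eliminate the degenerate cases. If $c\ge k$, then $M=kA-cD+l\le kA-cD+l\le (k-c)A+l-cA\le l-kA<0$ using $D\ge 2A\ge A$ and $A\ge k+l>l$, so one may assume $c<k$; in particular $k\ge 2$. If $D=2A$, then the constraint $A^k\le D^c=2^cA^c$ gives $A^{k-c}\le 2^c$, and combined with $A\ge k+l>2$ and $k-c\ge 1$ this forces $A$ to be bounded by something like $2^{c/(k-c)}$, which together with $D=2A$ and the linear form yields $M=kA-2cA+l=(k-2c)A+l$; one checks this is negative — if $k\le 2c$ trivially, and if $k>2c$ then $A^{k-c}\le 2^c$ with $A>2$ gives a quick numerical contradiction with $A\ge k+l$. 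So assume $D>2A$. The remaining possibility $A=k+l$ should be handled by a direct substitution showing negativity, or absorbed into the Lagrange step.

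Then, at a point $P$ with $D>2A$ (and, if it is genuinely interior, $A>k+l$), the constraints that can be active are $A^k\le D^c$ and possibly $A\ge k+l$, i.e. $k+l-A\le 0$. Theorem~\ref{theorem:Lagrange} gives
\begin{equation*}
(k,-c)=\lambda_1(kA^{k-1},-cD^{c-1})+\lambda_2(-1,0)
\end{equation*}
with $\lambda_1,\lambda_2\ge 0$. The second coordinate forces $\lambda_1 cD^{c-1}=c$, so $\lambda_1=D^{1-c}>0$, hence $A^k=D^c$ by complementary slackness. Substituting $\lambda_1=D^{1-c}$ into the first coordinate gives $k=kA^{k-1}D^{1-c}-\lambda_2$, i.e. $\lambda_2=k(A^{k-1}D^{1-c}-1)=k(A^{k-1}/D^{c-1}-1)$. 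From $A^k=D^c$ we get $A^{k-1}/D^{c-1}=D/A$ times $A^k/D^c=D/A$, wait — more carefully, $A^{k-1}=A^k/A=D^c/A$, so $A^{k-1}/D^{c-1}=D^c/(AD^{c-1})=D/A>2>1$, whence $\lambda_2>0$, and complementary slackness forces $A=k+l$. But then $A^k=D^c$ with $D>2A$ gives $(k+l)^k=A^k=D^c>(2A)^c=2^c(k+l)^c$, so $(k+l)^{k-c}>2^c$; I would then show this is incompatible with the maximality, or rather re-substitute $A=k+l$, $D^c=(k+l)^k$ into $L$ to get $L=k(k+l)-c(k+l)^{k/c}+l$ and show this is negative using $D>2A$. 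The main obstacle I expect is the bookkeeping in this last numerical estimate — verifying that $k(k+l)-cD+l<0$ when $D>2(k+l)$ and $k+l>2$, which should follow since $cD>2c(k+l)\ge 2(k+l)$ when $c\ge 1$, hence $L<k(k+l)-2(k+l)+l=(k-2)(k+l)+l-l\cdot 0$... precisely, $L<(k-2)(k+l)+l$, and this needs a slightly more careful argument when $k$ is large, handled by using the full strength $D^c\ge 2^c A^c$ rather than just $D>2A$. Concluding, $L$ is negative everywhere on $\Omega$.
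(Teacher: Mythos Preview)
Your overall strategy is sound and close to the paper's, but there is a genuine gap at the end. After the Lagrange step you correctly arrive at $A=k+l$ and $D^c=A^k$ with $D>2A$, and it remains to verify $L=k(k+l)-c(k+l)^{k/c}+l<0$. You do not actually prove this; your proposed fix, ``the full strength $D^c\ge 2^cA^c$ rather than just $D>2A$'', is literally the same inequality as $D\ge 2A$, so it adds nothing. (There is also a smaller slip: in the $D=2A$ case with $k=2c$ you obtain $(k-2c)A+l=l\ge 0$, not something ``trivially'' negative; the correct point is that $A^{k-c}\le 2^c$ then forces $A\le 2<k+l$, so that sub-case is empty.)

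The paper avoids the unfinished estimate via a clean preliminary observation: if $(k+1)A\le cD$ then $L=kA-cD+l\le l-A<0$, and since $D\ge 2A$ this already dispatches every $k<2c$. The paper then runs the Lagrange step with all three constraints at once (keeping $2A-D\le 0$ in the system rather than peeling it off beforehand). In the case $\lambda_1>0$ one may thus assume $k\ge 2c$, whence $D=A^{k/c}\ge A^2$; for $c\ge 2$ one has $k+1\le ck\le c(k+l)\le cA$, so $(k+1)A\le cA^2\le cD$ and the preliminary observation finishes again; for $c=1$ a direct one-line estimate $L\le kA-A^2+l\le l-lA<0$ suffices. This reduction to the inequality $(k+1)A\le cD$ is precisely the missing idea that would let you close your final case as well.
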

\begin{proof}
The function $L$ attains its maximum
in $\Omega$ at some point $P$.
To see this apply Lemma~\ref{lemma:attains}
with an arbitrary $N\ge\max\{k,c+1\}$, and the closed subset
defined by conditions
$$A_1=\ldots=A_k,\quad A_{k+1}=\ldots=A_N=1,\quad D_1=\ldots=D_c,\quad A_1\ge k+l.$$

Abusing notation, we write $P=(A,D)$.
Note that if $(k+1)A\le cD$, then
$$
L(A,D)=kA-cD+l\le l-A<0.
$$
In particular, this happens if $k<2c$, since $k$ and $c$ are integers.

By Theorem~\ref{theorem:Lagrange}
applied to $L$ and
$$
G_1=A^k-D^c,\ G_2=2A-D,\ G_3=k+l-A,
$$
one has
$$
(k,-c)=\lambda_1 (kA^{k-1},-cD^{c-1})+\lambda_2(2,-1)+\lambda_3(-1,0)
$$
for some non-negative numbers $\lambda_1$, $\lambda_2$, and $\lambda_3$.

Suppose that $\lambda_1>0$. By Theorem~\ref{theorem:Lagrange}
this means that $A^k=D^c$.
We can assume that~\mbox{$k\ge 2c$}. Thus $D^c\ge A^{2c}$, and $D\ge A^2$.

Assume that $c\ge 2$. Then $k+1\le ck\le c(k+l)\le cA$, so that
$$
(k+1)A\le cA^2\le cD,
$$
and we are done. Hence we have $c=1$, and
$$
L(A,D)=kA-D+l\le kA-A^2+l\le l-lA<0.
$$

Therefore, we may suppose that $\lambda_1=0$. Then
\begin{equation}\label{eq:k-c-123}
(k,-c)=(2\lambda_2-\lambda_3,-\lambda_2),
\end{equation}
so that $\lambda_2=c$ and
$D=2A$ by Theorem~\ref{theorem:Lagrange}.
Also, we see from~\eqref{eq:k-c-123} that $k\le 2c$.
If~\mbox{$k=2c$}, then
$A^{2c}=(2A)^c$, which means $A=2<k+l$, a contradiction.
Thus, we have~\mbox{$k<2c$}, which implies the assertion of the lemma.
\end{proof}

\begin{lemma}\label{lemma:aabbdd}
Let $k$ be a positive integer, let $l$ be a non-negative integer
such that $k+l>2$, and let $c$ be a non-negative integer.
Let $\Omega\subset\R^3_{A,B,D}$ be defined by inequalities
$A^kB^l\le D^c$, $D\ge 2A$, $A\ge B\ge 1$, and $A\ge k+l$.
Put
$$
L(A,B,D)=kA+lB-cD.
$$
Then $L$ is negative on $\Omega$.
\end{lemma}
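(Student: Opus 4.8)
The plan is to follow the proof of Lemma~\ref{lemma:aa11ddd} closely. First I would note that $c\ge 1$: if $c=0$, then $A^kB^l\le D^c=1$ is incompatible with $A\ge k+l>2$ and $B\ge 1$, so $\Omega=\varnothing$ and there is nothing to prove. Then, embedding $\R^3$ into $\R^{N+c}$ with $N=\max\{k+l,c+1\}$ via $A_1=\ldots=A_k=A$, $A_{k+1}=\ldots=A_{k+l}=B$, $A_{k+l+1}=\ldots=A_N=1$, $D_1=\ldots=D_c=D$, one checks that $\Omega$ is carried onto a non-empty closed subset of the set $\widetilde{\Omega}$ of Lemma~\ref{lemma:attains} intersected with the closed condition $A_1\ge k+l$, and that there $L$ agrees with the function of Lemma~\ref{lemma:attains} for $\alpha=k+l-N$. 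Hence $L$ attains its maximum on $\Omega$ at some point $P=(A,B,D)$, and it suffices to show $L(P)<0$.

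Next I would dispose of the boundary cases by reduction to Lemma~\ref{lemma:aa11ddd}. If $B=1$ or $l=0$, then $(A,D)$ lies in the region $\{A^k\le D^c,\ D\ge 2A,\ A\ge k+l\}$ and $L(P)=kA-cD+l$, so Lemma~\ref{lemma:aa11ddd} applied with the same $k$, $c$, $l$ gives $L(P)<0$. If $A=B$ (and $l\ge1$), then $(A,D)$ satisfies $A^{k+l}\le D^c$, $D\ge 2A$, $A\ge k+l$, and $L(P)=(k+l)A-cD$, so Lemma~\ref{lemma:aa11ddd} with parameters $k+l$, $c$, $0$ gives $L(P)<0$. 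From now on I may assume $A>B>1$ and $l\ge 1$.

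In this case $P$ is interior with respect to the conditions $B\le A$ and $B\ge 1$, so Theorem~\ref{theorem:Lagrange}, applied to the constraints $G_1=A^kB^l-D^c\le 0$, $G_2=2A-D\le 0$, $G_3=(k+l)-A\le 0$ (the multipliers for $B\le A$ and $B\ge 1$ being zero, as these constraints are strict at $P$), yields non-negative numbers $\lambda_1,\lambda_2,\lambda_3$ with
$$(k,l,-c)=\lambda_1(kA^{k-1}B^l,\ lA^kB^{l-1},\ -cD^{c-1})+\lambda_2(2,0,-1)+\lambda_3(-1,0,0).$$
Comparing the $B$-coordinates and using $l\ge 1$ gives $\lambda_1=(A^kB^{l-1})^{-1}>0$, hence $A^kB^l=D^c$; then $\lambda_1D^{c-1}=B/D$, so comparing the $D$-coordinates gives $\lambda_2=c(1-B/D)$, which is positive because $D\ge 2A>2B>B$; hence $D=2A$. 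Thus $B^l=2^cA^{c-k}$, and since $B>1$ and $A>2$, the case $k\ge 2c$ is impossible (it would force $2^cA^{c-k}<1$), so $k<2c$. Finally, comparing the $A$-coordinates gives $\lambda_3=((k-c)B+(2c-k)A)/A$, which is positive (using $k<2c$ and $0<B<A$), hence $A=k+l$.

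It remains to verify $L(P)=(k-2c)(k+l)+lB<0$ at this critical point, where $B^l=2^c(k+l)^{c-k}$ and $1<B<k+l$; equivalently, $lB<(2c-k)(k+l)$. If $l\le 2c-k$, this follows at once from $B<k+l$. If $l>2c-k$, I would split on the sign of $c-k$. For $k\le c$: writing $B^l=2^c(k+l)^{c-k}$ as a product of $c$ copies of $2$, of $c-k$ copies of $k+l$, and of $l-2c+k\ge 0$ copies of $1$, the arithmetic--geometric mean inequality gives $lB\le(c-k+1)(k+l)<(2c-k)(k+l)$ (the last step being strict, via AM--GM, also when $c=1$). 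For $k>c$: then $c\ge 2$, and one first excludes $2c-k=1$ (which would force $(k+l)^{c-1}\ge 2^c$, contradicting $B>1$), so $2c-k\ge 2$ and $k+l\ge 2c+1$; then $l\ln B=c\ln 2-(k-c)\ln(k+l)<(2c-k+1)\ln 2\le l\ln 2$, so $B<2$ and hence $lB<2l<(2c-k)(k+l)$. I expect this last paragraph---establishing $lB<(2c-k)(k+l)$ in the range $l>2c-k$, which uses AM--GM for $k\le c$ and the lower bound $B>1$ for $k>c$---to be the main obstacle; everything before it is a routine variant of the proof of Lemma~\ref{lemma:aa11ddd}.
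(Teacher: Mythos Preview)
Your argument is correct, but it diverges from the paper's proof at a key juncture. Both proofs begin identically: invoke Lemma~\ref{lemma:attains} to get a maximum, note $c\ge 1$, and dispose of the boundary cases $l=0$, $B=1$, $A=B$ by reduction to Lemma~\ref{lemma:aa11ddd}. At that point the paper does one more boundary reduction: it treats $D=2A$ separately, invoking Lemma~\ref{lemma:aabb2dd} with $r=c-1$ and $p=0$ to conclude $L(P)<0$ directly. With $D>2A$ now strict, the constraint $2A-D\le 0$ carries no multiplier, and the remaining Lagrange system $(k,l,-c)=\lambda_1(kA^{k-1}B^l,\,lA^kB^{l-1},\,-cD^{c-1})+\lambda_2(-1,0,0)$ forces $\lambda_1=(A^kB^{l-1})^{-1}$ and then $B/D=1$, an immediate contradiction. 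No further computation is needed.

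You instead keep $2A-D\le 0$ active in the KKT system, deduce that complementary slackness pins down the critical point completely ($D=2A$, $A=k+l$, $A^kB^l=D^c$), and then evaluate $L$ there, which leads to the inequality $lB<(2c-k)(k+l)$ that you establish by a case split and AM--GM. This works, and it has the virtue of bypassing Lemma~\ref{lemma:aabb2dd} entirely, using only Lemma~\ref{lemma:aa11ddd}; in effect your final paragraph reproves the relevant special case of Lemma~\ref{lemma:aabb2dd} by hand. The trade-off is exactly what you anticipated: the paper's route is much shorter and avoids your last paragraph altogether, at the cost of appealing to one more preparatory lemma. A couple of minor expository points: your parenthetical ``which would force $(k+l)^{c-1}\ge 2^c$'' reads backwards---the inequality $(k+l)^{c-1}\ge 2^c$ is a consequence of $k+l\ge 2c+1$ and $c\ge 2$, and it is this fact that contradicts $B>1$; and it would be cleaner to state explicitly that $\Omega\neq\varnothing$ (e.g.\ $A=k+l$, $B=1$, $D$ large) before invoking Lemma~\ref{lemma:attains}.
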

\begin{proof}
The function $L$ attains its maximum
in $\Omega$ at some point $P$.
To see this apply Lemma~\ref{lemma:attains}
with an arbitrary $N\ge\max\{k+l,c+1\}$, and the closed subset
defined by conditions
\begin{multline*}
A_1=\ldots=A_k,\quad A_{k+1}=\ldots=A_{k+l},\quad A_{k+l+1}=\ldots=A_N=1,\\
D_1=\ldots=D_c, \quad A_1\ge k+l.
\end{multline*}
Abusing notation, we write $P=(A,B,D)$ and put $M=L(P)$.

Note that $c>0$. Indeed, otherwise one has
$$
1\ge A^kB^l\ge A^k\ge A>2,
$$
which is absurd.

If $l=0$, then $A^k\le D^c$, so that
$$
M=kA-cD<0
$$
by Lemma~\ref{lemma:aa11ddd}.
Thus we will assume that $l>0$.

If $A=B$, then $A^{k+l}\le D^c$, so that
$$
M=(k+l)A-cD<0
$$
by Lemma~\ref{lemma:aa11ddd}.
Thus we will assume that $A>B$.

If $B=1$, then $A^k\le D^c$, so that
$$
M=kA-cD+l<0
$$
by Lemma~\ref{lemma:aa11ddd}.
Thus we will assume that $B>1$.

If $D=2A$, then
$$
M=kA+lB-c\cdot 2A=(k-2)A+lB-(c-1)\cdot 2A<0
$$
by Lemma~\ref{lemma:aabb2dd}. Thus we will assume that $D>2A$.

By Theorem~\ref{theorem:Lagrange}
applied to $L$ and
$$
G_1=A^kB^l-D^c,\ G_2=k+l-A,\ G_3=2A-D,\ G_4=B-A,\ G_5=1-B,
$$
one has
\begin{equation}\label{eq:aabbdd-Lagrange}
(k,l,-c)=
\lambda_1 (kA^{k-1}B^l,lA^kB^{l-1},-cD^{c-1})+\lambda_2(-1,0,0)
\end{equation}
for some non-negative numbers $\lambda_1$ and $\lambda_2$.
Since $l>0$, equation~\eqref{eq:aabbdd-Lagrange} implies that
$$
\lambda_1=\frac{1}{A^kB^{l-1}},
$$
so that $\lambda_1$ is positive and $A^kB^l=D^c$ by Theorem~\ref{theorem:Lagrange}.
Finally, since $c>0$, equation~\eqref{eq:aabbdd-Lagrange} implies that
$$
1=D^{c-1}\cdot\lambda_1=\frac{D^{c-1}}{A^kB^{l-1}}=\frac{B}{D}<1,
$$
which is a contradiction.
\end{proof}

Consider a real vector space $\R^m$.
For a vector $v\in\R^m$ we denote by $v^{(i)}$ its $i$-th
coordinate.
Denote
$$
\ee_i=(\underbrace{0,\ldots,0}_{i-1},1,0,\ldots,0)\in\R^{m}, \quad
1\le i\le m.
$$

\begin{lemma}\label{lemma:go-up}
Let $u\in\R^m$ be a vector such that
$$0\le u^{(1)}\le\ldots\le u^{(m)}.$$
Put $u_{-1}=-\ee_1$, put
$u_0=\ee_1$, put $u_i=-\ee_i+\ee_{i+1}$ for $1\le i\le m-1$,
and put $u_m=-\ee_m$. Choose a subset~\mbox{$\Lambda\subset\{-1,0,\ldots,m\}$}.
Suppose that
$$\lambda u+\sum\limits_{i\in\Lambda}\lambda_i u_i=(1,\ldots,1)$$
for some non-negative number $\lambda$ and some positive
numbers $\lambda_i$.
Then there exist two indices $0\le p\le q\le m$ such that
$u^{(p+1)}=\ldots=u^{(q)}$
and
$$\{0,\ldots,p-1\}\cup\{q+1,\ldots,m\}\subset\Lambda.$$
\end{lemma}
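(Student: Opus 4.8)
The plan is to unwind the single vector relation $\lambda u+\sum_{i\in\Lambda}\lambda_i u_i=(1,\ldots,1)$ into its $m$ scalar coordinates and then read the required combinatorial structure of $\Lambda$ off the sequence of coefficients $\lambda_i$. It is convenient to set $\lambda_i=0$ for every $i\in\{-1,0,\ldots,m\}\setminus\Lambda$, so that all the $\lambda_i$ are non-negative and $\lambda_i>0$ is equivalent to $i\in\Lambda$. Since $u_{-1}=-\ee_1$ and $u_0=\ee_1$ only involve the first coordinate, $u_i=-\ee_i+\ee_{i+1}$ involves the coordinates $i$ and $i+1$, and $u_m=-\ee_m$ involves the last coordinate, the relation becomes the system
\[
\lambda u^{(j)}+\lambda_{j-1}-\lambda_j=1\quad (2\le j\le m),\qquad \lambda u^{(1)}+\lambda_0-\lambda_1-\lambda_{-1}=1 .
\]

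Next I would introduce the non-negative sequence $f(j)=\lambda_j$, $0\le j\le m$, and rewrite the system as a statement about its increments: $f(j)-f(j-1)=\lambda u^{(j)}-1$ for $2\le j\le m$, and $f(1)-f(0)=\lambda u^{(1)}-1-\lambda_{-1}$. Since $\lambda\ge 0$, the hypothesis $u^{(1)}\le\ldots\le u^{(m)}$ yields $\lambda u^{(1)}\le\ldots\le\lambda u^{(m)}$, and $\lambda_{-1}\ge 0$; hence the successive increments $f(1)-f(0)\le f(2)-f(1)\le\ldots\le f(m)-f(m-1)$ are non-decreasing, i.e. $f$ is a discretely convex non-negative sequence on $\{0,1,\ldots,m\}$. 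A short averaging argument (the mean of a non-decreasing sequence over an initial block of indices is at most its mean over a disjoint later block) then shows that the zero locus $Z=\{\,j:f(j)=0\,\}$ is an interval $\{a,a+1,\ldots,b\}$ with $0\le a\le b\le m$, possibly empty.

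Finally I would set $p=a$ and $q=b$ (taking $p=q=0$ in the degenerate case $Z=\varnothing$) and verify the two assertions. For $a+1\le j\le b$ one has $f(j)=f(j-1)=0$, hence $\lambda u^{(j)}=1$; if $b>a$ this forces $\lambda>0$ and $u^{(a+1)}=\ldots=u^{(b)}=1/\lambda$, while if $b=a$ the range of $j$ is empty, so in every case $u^{(p+1)}=\ldots=u^{(q)}$. On the other hand every index $j\in\{0,\ldots,a-1\}\cup\{b+1,\ldots,m\}$ lies outside $Z$, so $f(j)=\lambda_j>0$, which by our convention means $j\in\Lambda$; this is exactly the inclusion $\{0,\ldots,p-1\}\cup\{q+1,\ldots,m\}\subset\Lambda$. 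The only delicate points are the bookkeeping at the two ends of the index range — the extra vector $u_{-1}$ sitting at the first coordinate and the absence of an $(m+1)$-st coordinate — and the observation that for $a\ge 1$ the increment $f(a)-f(a-1)$ is strictly negative, which is why $u^{(a)}$ need not equal $u^{(a+1)}$ and the left endpoint of the constant run must be taken to be $p=a$ rather than $p=a-1$.
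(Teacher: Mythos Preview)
Your approach is correct and essentially the same as the paper's: both unwind the vector identity into the $m$ scalar equations $\lambda u^{(j)}+\lambda_{j-1}-\lambda_j=1$ (with the extra $-\lambda_{-1}$ at $j=1$) and then use the monotonicity of the $u^{(j)}$ to force $\lambda_i>0$ outside an interval of indices. The only cosmetic difference is that the paper picks $(p,q)$ from the level set $\{j:\lambda u^{(j)}=1\}$ and checks each required $\lambda_i>0$ by a one-line substitution, whereas you read $(p,q)$ off the zero set of the discretely convex sequence $j\mapsto\lambda_j$; the endpoint issue at $j=1$ that you flag is handled by exactly the same monotonicity step.
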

\begin{proof}
Note that for the vector $\lambda u$ the same assumptions hold
as for the vector $u$ itself provided that $\lambda\ge 0$. Thus
we will replace $u$ by $\lambda u$ and assume that
$\lambda=1$.
In other words we have a system of equations
$$
\left\{
  \begin{array}{ll}
   u^{(1)}-\lambda_{-1}+\lambda_0-\lambda_1=1,  \\
   u^{(2)}+\lambda_1-\lambda_2=1,  \\
   \ldots   \\
   u^{(m-1)}+\lambda_{m-2}-\lambda_{m-1}=1,   \\
   u^{(m)}+\lambda_{m-1}-\lambda_m=1,
  \end{array}
\right.
$$
where we put $\lambda_i=0$ for $i\not\in\Lambda$.
Choose the indices $p$ and $q$ so that
$$u^{(p)}<u^{(p+1)}=\ldots=u^{(q)}=1<u^{(q+1)}.$$
In particular, we put $p=q$ if one has $u^{(p)}<1<u^{(p+1)}$,
we put
$p=q=0$ if $1<u^{(1)}$, and we put $p=q=m$ if $u^{(m)}<1$.

For $1<i\leqslant p$ we have
$$
1=u^{(i)}+\lambda_{i-1}-\lambda_i< 1+\lambda_{i-1},
$$
so $\lambda_{i-1}>0$.
Moreover, if $p>0$, then
$$
1=u^{(1)}-\lambda_{-1}+\lambda_0-\lambda_1< 1+\lambda_{0},
$$
so $\lambda_{0}>0$.
In the same way for $j> q$ we have
$$
1=u^{(j)}+\lambda_{j-1}-\lambda_j> 1-\lambda_j,
$$
so $\lambda_j>0$.
This exactly gives the assertion of the lemma.
\end{proof}

\begin{lemma}\label{lemma:go-down}
Let $u\in\R^m$ be a vector such that
$$0\ge u^{(1)}\ge\ldots\ge u^{(m)}.$$
Put $u_0=-\ee_1$, and put $u_i=-\ee_i+\ee_{i+1}$ for $1\le i\le m-1$.
Choose a subset~\mbox{$\Lambda\subset\{0,\ldots,m-1\}$}.
Suppose that
$$\lambda u+\sum\limits_{i\in\Lambda}\lambda_i u_i=(-1,\ldots,-1)$$
for some non-negative number $\lambda$ and some positive
numbers $\lambda_i$.
Then one of the following possibilities occurs:
\begin{itemize}
\item[(i)] $0\in\Lambda$ and $u^{(2)}=\ldots=u^{(m)}$;
\item[(ii)] $u^{(1)}=\ldots=u^{(m)}$;
\item[(iii)] $\{1,2,\ldots,m-1\}\subset\Lambda$;
\item[(iv)] $\{0,2,3,\ldots,m-1\}\subset\Lambda$.
\end{itemize}
\end{lemma}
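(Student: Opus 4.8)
The plan is to mirror the proof of Lemma~\ref{lemma:go-up}. As there, after replacing $u$ by $\lambda u$ we may assume $\lambda=1$: if $\lambda=0$ then for $m\ge 2$ the $m$-th coordinate of $\sum_{i\in\Lambda}\lambda_i u_i$ equals $\lambda_{m-1}\ge 0$, which cannot be $-1$, so this case does not occur, while for $m=1$ assertion~(ii) is vacuous. Writing the equation out coordinate by coordinate, and adopting the convention that $\lambda_i=0$ for $i\notin\Lambda$ (so $\lambda_i\ge 0$ always and $\lambda_i>0$ exactly when $i\in\Lambda$), one obtains
$$
u^{(1)}-\lambda_0-\lambda_1=-1,\quad u^{(k)}+\lambda_{k-1}-\lambda_k=-1\ \ (2\le k\le m-1),\quad u^{(m)}+\lambda_{m-1}=-1.
$$
Summing the last $m-j$ of these equations telescopes to $\lambda_j=-\sum_{k=j+1}^m(u^{(k)}+1)$ for $1\le j\le m-1$, and summing all of them gives $\lambda_0=\sum_{k=1}^m(u^{(k)}+1)$.

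Next I would put $t_k=u^{(k)}+1$ and pass to the suffix sums $P_j=\sum_{k=j}^m t_k$ for $1\le j\le m$, with $P_{m+1}=0$; the formulas above then read simply $\lambda_0=P_1$ and $\lambda_j=-P_{j+1}$ for $1\le j\le m-1$. The hypotheses say precisely that $(P_1,\ldots,P_{m+1})$ is a \emph{convex} sequence (its forward differences $P_{j+1}-P_j=-t_j=-(u^{(j)}+1)$ are non-decreasing because $u^{(j)}$ is non-increasing), with $P_1\ge 0$, with $P_2,\ldots,P_m\le 0$, and with $P_{m+1}=0$. Under this dictionary the four alternatives become: (i) $P_1>0$ and $t_2=\ldots=t_m$; (ii) $t_1=\ldots=t_m$; (iii) $P_2,\ldots,P_m<0$; (iv) $P_1>0$ and $P_3,\ldots,P_m<0$.

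The heart of the proof is a short case analysis driven by convexity; assume $m\ge 2$, the case $m=1$ being trivial. Write $d_j=P_{j+1}-P_j$. If $d_1=0$ then $P_1=P_2$, hence $P_1=P_2=0$ since $P_1\ge 0\ge P_2$; as $d_1\le d_2\le\ldots$ the sequence $P$ is then non-decreasing, so it vanishes identically (it ends at $0$ and stays $\le 0$), giving $t_1=\ldots=t_m=0$, which is~(ii). If $d_1<0$, let $s$ be the number of strictly negative $d_j$; using $P_2\le 0$ one checks $1\le s\le m-1$, so $P$ strictly decreases on $\{1,\ldots,s+1\}$ and is non-decreasing on $\{s+1,\ldots,m+1\}$. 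If $P_{s+1}=0$, strict decrease forces $s=1$ and $P_2=0$, whence $P_2=\ldots=P_{m+1}=0$ and $P_1>0$, which is~(i). If $P_{s+1}<0$, let $a$ be the least index in $\{s+1,\ldots,m+1\}$ with $P_a=0$; if $a\le m$ then $P_a=\ldots=P_{m+1}=0$, so $d_a=\ldots=d_m=0$, and monotonicity of the $d_j$ forces $d_{s+1}=\ldots=d_{a-1}=0$ too, hence $P_{s+1}=P_a=0$, a contradiction; therefore $a=m+1$, i.e.\ $P_{s+1},\ldots,P_m<0$, and then either $P_2<0$, which is~(iii), or $P_2=0$, which forces $P_1>0$ by strict decrease and is~(iv).

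The step where it is easiest to go wrong is the subcase $P_{s+1}<0$: one must distinguish $a\le m$ (ruled out by convexity) from $a=m+1$ (where $P$ returns to $0$ only at the very last step and nothing is contradicted). Overlooking the latter makes it look as though alternatives~(iii) and~(iv) can never occur, whereas in fact they do — already for vectors with $u^{(1)}=0$.
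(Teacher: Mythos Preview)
Your proof is correct, but it proceeds along a genuinely different route from the paper's. The paper works directly with the coordinate equations and splits on the value of $u^{(m)}$: if $u^{(m)}=-1$ it traces back the equalities $\lambda_{m-1}=\ldots=0$ to land in cases (i) or (ii); if $u^{(m)}<-1$ it locates the block of coordinates equal to $-1$ (possibly empty) and argues forward and backward through the chain of inequalities to reach (iii) or (iv). You instead solve the system outright, obtaining the closed formulas $\lambda_0=\sum_k(u^{(k)}+1)$ and $\lambda_j=-\sum_{k>j}(u^{(k)}+1)$, and then recast everything via the suffix sums $P_j$: the constraints become $P_1\ge 0$, $P_2,\ldots,P_m\le 0$, $P_{m+1}=0$, and convexity of $(P_j)$, and the four alternatives become transparent sign/equality patterns for the~$P_j$. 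Your convex-sequence picture makes the case analysis slightly more systematic and explains \emph{why} only four shapes are possible, whereas the paper's argument is shorter and more ad hoc. Two small expository points: in the final branch ($P_{s+1}<0$, $a=m+1$) you should say explicitly that the strict decrease on $\{1,\ldots,s+1\}$ together with $P_2\le 0$ forces $P_3,\ldots,P_s<0$ as well, so that all of $P_2,\ldots,P_m$ (resp.\ $P_3,\ldots,P_m$) are negative in (iii) (resp.\ (iv)); and for $m=1$ alternative (ii) is trivially true rather than vacuous, but this does not affect the argument.
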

\begin{proof}
Note that for the vector $\lambda u$ the same assumptions hold
as for the vector $u$ itself provided that $\lambda\ge 0$. Thus
we will replace $u$ by $\lambda u$ and assume that
$\lambda=1$.
In other words we have a system of equations
$$
\left\{
  \begin{array}{ll}
   u^{(1)}-\lambda_0-\lambda_1=-1,  \\
   u^{(2)}+\lambda_1-\lambda_2=-1,  \\
   \ldots   \\
   u^{(m-1)}+\lambda_{m-2}-\lambda_{m-1}=-1,   \\
   u^{(m)}+\lambda_{m-1}=-1,
  \end{array}
\right.
$$
where we put $\lambda_i=0$ for $i\not\in\Lambda$.
Suppose that $u^{(m)}=-1$.
Let $q$ be the minimal index such that $u^{(q)}=-1$.
Then, considering equations from the last one to the $q$-th
one by one we have
$$
\lambda_{m-1}=\ldots=\lambda_{q-1}=0.
$$
Moreover, if $q>2$, then
$$
u^{(q-1)}+\lambda_{q-2}-\lambda_{q-1}=u^{(q-1)}+\lambda_{q-2}>-1,
$$
which is impossible.
Thus either $q=2$, so $u^{(1)}>-1$ and $\lambda_0>0$, which corresponds to case~(i),
or $q=1$, which corresponds to case~(ii).

Now suppose that $u^{(m)}<-1$.
Choose the indices $1\le p\le q< m$ such that
$$
u^{(p)}>u^{(p+1)}=\ldots=u^{(q)}=-1>u^{(q+1)}.
$$
Then for $i>q$ one has
$$
   -1=u^{(i)}+\lambda_{i-1}-\lambda_{i}<-1+\lambda_{i-1},
$$
so $\lambda_{i-1}>0$.
Moreover, for $p\leqslant j \leqslant q$ one has
$$
   -1=u^{(j)}+\lambda_{j-1}-\lambda_{j}=-1+\lambda_{j-1}-\lambda_{j}<-1+\lambda_{j-1},
$$
so we also have $\lambda_{j-1}>0$.

If one has $\lambda_i>0$ for all $1\le i\le p-1$, then we obtain
case~(iii).
Otherwise take the maximal number $s$ with
$1\le s\le p-1$ such that $\lambda_s=0$.
If $s>1$ then
$$
   u^{(s)}+\lambda_{s-1}-\lambda_{s}>-1,
$$
which is impossible. Thus $s=1$ and $\lambda_0>0$,
which gives us case~(iv).
\end{proof}

\begin{lemma}\label{lemma:go-up-and-down}
Choose a vector
$u=\left(u^{(1)},\ldots,u^{(N+c)}\right)\in\R^{N+c}$
such that
$$0\le u^{(1)}\le \ldots u^{(N)},\quad
0\ge u^{(N+1)}\ge \ldots \ge u^{(N+c)}.$$
Put $u_{-1}=-\ee_1$ and $u_0=2\ee_1-\ee_{N+1}$. Furthermore, put
$u_i=-\ee_{i}+\ee_{i+1}$
for~\mbox{$1\le i\le N-1$} and for $N+1\le i<N+c$.
Finally, put $u_N=-\ee_N$.
Choose a subset~\mbox{$\Lambda'\subset\{-1,0,\ldots,N+c-1\}$}.
Suppose that
$$\lambda u+\sum\limits_{i\in\Lambda'}\lambda_i u_i=
(\underbrace{1,\ldots,1}_{N},\underbrace{-1,\ldots,-1}_{c})$$
for some non-negative number $\lambda$ and some positive
numbers $\lambda_i$.
Define
$$
\Lambda''=\left\{i\mid u^{(i)}=u^{(i+1)}\right\}.
$$
Then one of the following possibilities occurs:
\begin{itemize}
\item[(I)]
there is an index $0\le q\le N$ such that
$$
\{1,\ldots, q-1, q+1,\ldots, N, N+1,\ldots,N+c-1\}\subset\Lambda'\cup\Lambda''.
$$
and $N\in\Lambda'$;
\item[(II)]
there is an index
$1\le p\le N$
such that
$$
\{0,\ldots, p-1, p+1,\ldots, N-1, N+2,\ldots, N+c-1\}\subset\Lambda'\cup\Lambda''
$$
and $0\in\Lambda'$;
\item[(III)]
there are indices
$1\le p\le q\le N-1$
such that
$$
\{0,\ldots, p-1, p+1,\ldots, q-1, q+1,\ldots, N, N+2,\ldots, N+c-1\}\subset\Lambda'\cup\Lambda''
$$
and $\{0, N\}\subset\Lambda'$.
\end{itemize}
\end{lemma}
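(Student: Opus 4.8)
The plan is to split the given relation into its ascending half, living on the coordinates $1,\ldots,N$, and its descending half, living on $N+1,\ldots,N+c$; to apply Lemma~\ref{lemma:go-up} to the first and Lemma~\ref{lemma:go-down} to the second; and then to glue the two conclusions together.

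First I would restrict the relation $\lambda u+\sum_{i\in\Lambda'}\lambda_i u_i=(\underbrace{1,\ldots,1}_{N},\underbrace{-1,\ldots,-1}_{c})$ to the first $N$ coordinates. There the vectors $u_{-1},u_1,\ldots,u_{N-1},u_N$ are unchanged, the vectors $u_i$ with $N+1\le i<N+c$ vanish, and $u_0$ becomes $2\ee_1$; so, after rescaling the coefficient of $u_0$ by $2$, the restricted relation has exactly the shape required by Lemma~\ref{lemma:go-up} with $m=N$, the index set being $\Lambda'\cap\{-1,0,\ldots,N\}$. That lemma supplies indices $0\le p\le q\le N$ with $u^{(p+1)}=\ldots=u^{(q)}$ (whence $\{p+1,\ldots,q-1\}\subset\Lambda''$, and $\{p+1,\ldots,N-1\}\subset\Lambda''$ in the case $q=N$) and with $\{0,\ldots,p-1\}\cup\{q+1,\ldots,N\}\subset\Lambda'$. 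In particular $q<N$ forces $N\in\Lambda'$, and $p>0$ forces $0\in\Lambda'$.

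Dually, restricting the same relation to the last $c$ coordinates kills every $u_i$ except $u_0$, which there equals $-\ee_{N+1}$ (that is, $-\ee_1$ after shifting indices by $N$), and the descent vectors $u_i$ with $N+1\le i<N+c$. This is precisely the situation of Lemma~\ref{lemma:go-down} with $m=c$, the coefficient of the distinguished vector being nonzero exactly when $0\in\Lambda'$. That lemma returns one of its four alternatives, which I would read as follows: (i) and (ii) force equalities among $u^{(N+1)},\ldots,u^{(N+c)}$, hence $\{N+2,\ldots,N+c-1\}\subset\Lambda''$, and in case~(ii) even $\{N+1,\ldots,N+c-1\}\subset\Lambda''$; (iii) and (iv) place $\{N+1,\ldots,N+c-1\}$, respectively $\{N+2,\ldots,N+c-1\}$, into $\Lambda'$; and (i), (iv) moreover force $0\in\Lambda'$.

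Finally I would glue, organizing by whether $0\in\Lambda'$. If $0\notin\Lambda'$, then the first step forces $p=0$ and only the go-down alternatives (ii), (iii) remain; together they put $(\{1,\ldots,N\}\setminus\{q\})\cup\{N+1,\ldots,N+c-1\}$ into $\Lambda'\cup\Lambda''$, and since $q<N$ we also get $N\in\Lambda'$, which is exactly alternative~(I) with the index $q$. If $0\in\Lambda'$, the excluded indices of the ascending half are $p$ and $q$, and — according to whether the go-down output supplies all of $\{N+1,\ldots,N+c-1\}$ or only $\{N+2,\ldots,N+c-1\}$, and to whether $q<N$ (so that $N\in\Lambda'$ too) — one lands in alternative~(III) with the pair $(p,q)$ or collapses to alternative~(I) or~(II); the membership side conditions $N\in\Lambda'$, $0\in\Lambda'$, $\{0,N\}\subset\Lambda'$ are in every case furnished by the outputs already obtained. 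I expect this gluing step — matching the excluded indices $p,q$ to those appearing in (I)--(III), and checking the three membership conditions in each branch — to be the only real work, with the small-$c$ situations and the degenerate configurations in which the ascending half of $u$ is constant requiring separate attention.
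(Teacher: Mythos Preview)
Your approach is essentially identical to the paper's: restrict the relation to the first $N$ and to the last $c$ coordinates, apply Lemma~\ref{lemma:go-up} and Lemma~\ref{lemma:go-down} respectively, and then glue according to whether $0\in\Lambda'$ (and, when $0\in\Lambda'$, whether $q=N$), landing in cases~(I)--(III). The paper carries out exactly this split-and-glue argument in a terser form, and your remark that the constant-ascending and small-$c$ configurations deserve separate attention is a fair description of where the bookkeeping lies.
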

\begin{proof}
Apply Lemma~\ref{lemma:go-up} to the first $N$ coordinates of
the vector $u$ and Lemma~\ref{lemma:go-down}
to its last $c$ coordinates.
Define numbers $p$ and $q$ following the notation of
Lemma~\ref{lemma:go-up}. The only possibility
to have $0\not\in\Lambda$ is to have $p=0$.
Then $\{1,\ldots,q-1\}\subset \Lambda''$ and~\mbox{$\{q+1,\ldots,N\}\subset \Lambda'$}.
Moreover, for the last $c$ coordinates only cases (ii) or (iii) from Lemma~\ref{lemma:go-down} can occur,
which gives us case~(I).

So we can assume that $0\in \Lambda'$.
From Lemma~\ref{lemma:go-down} one can easily see that
$$
\{N+2,\ldots,N+c-1\}\subset \Lambda'\cup \Lambda''.
$$
If $q=N$ then $\{0,\ldots, p-1\}\subset\Lambda'$ and
$\{p+1,\ldots, N-1\}\subset\Lambda''$, and we obtain case~(II).
If $q<N$ then
$$
\{0,\ldots, p-1, q+1,\ldots,N\}\subset\Lambda'
$$
and
$\{p+1,\ldots, q-1\}\subset\Lambda''$, and we obtain case~(III).
\end{proof}

Now we are ready to prove the main result of this section.

\begin{proposition}\label{proposition:Lagrange}
Let $c$ and $N$ be positive integers such that
$N\ge 2c+1$.
Let $\Omega$ be a subset of $\R^{N+c}$ with coordinates
$A_1, \ldots, A_N, D_1, \ldots, D_c$
defined by inequalities
\begin{gather*}
A_1\cdot\ldots\cdot A_N\le D_1\cdot\ldots\cdot D_c,\\
A_1\ge\ldots\ge A_N\ge 1, \quad D_1\ge\ldots\ge D_c,\\
D_1\ge 2A_1, \ D_2\ge A_2,\ \ldots, \ D_c\ge A_c.
\end{gather*}
Put
$$
L(A_1,\ldots,A_N,D_1,\ldots,D_c)=A_1+\ldots+A_N-D_1-\ldots-D_c.
$$
Then $L$ is non-positive on the subset of $\Omega$ where $A_1\ge N$.
\end{proposition}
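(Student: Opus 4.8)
The plan is to argue by induction on $c$, combining the compactness statement of Lemma~\ref{lemma:attains}, the Lagrange multiplier principle (Theorem~\ref{theorem:Lagrange}), the combinatorial Lemma~\ref{lemma:go-up-and-down}, and the elementary estimates of Lemmas~\ref{lemma:aa11ddd} and~\ref{lemma:aabb2dd}. Put $\Omega_0=\Omega\cap\{A_1\ge N\}$. If $\Omega_0$ is empty there is nothing to prove, so assume it is not. Since $\Omega_0$ is a non-empty closed subset of the set $\widetilde\Omega$ of Lemma~\ref{lemma:attains} (note that $N>c$ because $c\ge 1$), the function $L$ attains its maximum on $\Omega_0$ at some point $P$, and it is enough to show that $L(P)\le 0$. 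Observe that the hypothesis $N\ge 2c+1$, being equivalent to $N-c\ge c+1$, is inherited by all the reduced parameter pairs that appear below.

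Suppose first that $D_j=A_j$ for some $j$ with $2\le j\le c$, which forces $c\ge 2$. Delete the two coordinates $A_j$ and $D_j$ from $P$ to obtain a point $P'$ in $\R^{(N-1)+(c-1)}$. Then $P'$ satisfies the inequalities defining the analogous region for the parameters $N-1$ and $c-1$: dividing the product inequality $A_1\cdot\ldots\cdot A_N\le D_1\cdot\ldots\cdot D_c$ by $A_j=D_j$ preserves it; the monotonicity chains and the bound $A_N\ge 1$ survive; the inequality $D_1\ge 2A_1$ is untouched since $j\ge 2$; and each inequality $D_k\ge A_k$ survives after re-indexing. Moreover $A_1'=A_1\ge N>N-1$, and $L(P')=L(P)$ because the two deleted coordinates coincide. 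By the inductive hypothesis applied to $N-1$ and $c-1$ we get $L(P')\le 0$, hence $L(P)\le 0$.

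From now on we may assume $D_j>A_j$ for every $2\le j\le c$; in particular this covers the base case $c=1$. Write the inequalities defining $\Omega_0$ as $G\le 0$, with $G$ running over $G_0:=A_1\cdot\ldots\cdot A_N-D_1\cdot\ldots\cdot D_c$, the order differences $A_{i+1}-A_i$ and $D_{j+1}-D_j$, the function $2A_1-D_1$, the function $N-A_1$, and the functions $A_j-D_j$ for $2\le j\le c$; then apply Theorem~\ref{theorem:Lagrange} at $P$. By complementary slackness the multipliers attached to the last family vanish, since those inequalities are strict at $P$. Let $u$ be the gradient of $G_0$ at $P$: its first $N$ coordinates equal $(A_1\cdot\ldots\cdot A_N)/A_i$, which are positive and non-decreasing in $i$ because $A_1\ge\ldots\ge A_N$, while its last $c$ coordinates equal $-(D_1\cdot\ldots\cdot D_c)/D_j$, which are negative and non-increasing in $j$ because $D_1\ge\ldots\ge D_c$. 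With $\lambda\ge 0$ denoting the multiplier of $G_0$, the relation produced by Theorem~\ref{theorem:Lagrange} is exactly the hypothesis of Lemma~\ref{lemma:go-up-and-down}, the vectors $u_{-1},u_0,u_i,u_N$ there being the gradients of $N-A_1$, of $2A_1-D_1$, of the order differences, and of $1-A_N$, respectively. Hence one of the cases (I), (II), (III) of that lemma holds.

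It remains to read off each case. In case (I) the point $P$ has $A_N=1$, all the $A_i$ form an initial block equal to a single value $A$ followed by a block equal to $1$, and all the $D_j$ equal a single value $D$; with $q$ the number of $A_i$ equal to $A$ (here $q\ge 1$, otherwise $A_1=1<N$), the surviving constraints are $A^q\le D^c$, $D\ge 2A$, $A\ge 1$, and $A=A_1\ge N$, and since $q+(N-q)=N>2$ Lemma~\ref{lemma:aa11ddd} gives $L(P)<0$. In cases (II) and (III) one has $D_1=2A_1$ and $D_2=\ldots=D_c=:D$, while the $A_i$ take at most the three values $A\ge B\ge 1$ (in case (III) the smallest block equals $1$); dividing the product inequality by $A_1=A$ rewrites it as $A^{k-1}B^l\le 2D^{c-1}$, and then $P$ lies in the region of Lemma~\ref{lemma:aabb2dd} for suitable $k\ge 1$, $l\ge 0$, $r=c-1$, and additive constant $p_0\ge 0$, whose hypothesis $A\ge k+l+p_0$ reads exactly $A_1\ge N$, while the inequalities $D\ge B$ and $D\ge A$ (the latter needed only when $k\ge 2$) follow from $D=D_2\ge A_2$. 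Thus $L(P)\le 0$ in every case, which completes the induction. The only real obstacle is this last step: converting the three outcomes of Lemma~\ref{lemma:go-up-and-down} into explicit block configurations and verifying, also in the degenerate sub-cases where some blocks are empty (such as $p=1$, $q=p$, or $q=N$), that the data obtained satisfies all the hypotheses of Lemmas~\ref{lemma:aa11ddd} and~\ref{lemma:aabb2dd}.
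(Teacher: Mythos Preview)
Your argument is correct and follows essentially the same route as the paper's proof: reduce to the case $D_j>A_j$ for $2\le j\le c$ (the paper does this by an implicit cancellation, you phrase it as induction on $c$), apply Theorem~\ref{theorem:Lagrange} to the maximum point guaranteed by Lemma~\ref{lemma:attains}, invoke Lemma~\ref{lemma:go-up-and-down} to obtain the three block configurations, and finish each with the elementary estimates. Two small remarks: you forgot to list $1-A_N$ among the constraint functions $G$, although you clearly use it when you identify $u_N$ with its gradient; and in case~(I) you exploit $N\in\Lambda'$ (hence $A_N=1$) to appeal directly to Lemma~\ref{lemma:aa11ddd}, whereas the paper ignores this extra information and applies the slightly more general Lemma~\ref{lemma:aabbdd}. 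Both routes are valid.
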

\begin{proof}
Rewrite the inequalities defining $\Omega$ as
\begin{gather*}
A_1\cdot\ldots\cdot A_N-D_1\cdot\ldots\cdot D_c\le 0,\\
A_2-A_1\le 0,\ \ldots, \ A_N-A_{N-1}\le 0,\ 1-A_N\le 0,\\
D_2-D_1\le 0,\ \ldots, \ D_c-D_{c-1}\le 0, \\
A_2-D_2\le 0, \ \ldots, \ A_c-D_c\le 0,\ 2A_1-D_1\le 0.
\end{gather*}

The function $L$ attains its maximum
in $\Omega$ at some point $P\in\Omega$ by Lemma~\ref{lemma:attains}.
Abusing notation a little bit, we write~\mbox{$P=(A_1,\ldots,A_N,D_1,\ldots,D_c)$}.
If for some $2\le i\le c$ one has~\mbox{$A_i=D_i$}, we
cancel $A_i$ and $D_i$ from the inequalities defining $\Omega$ and
from the definition of~$L$ and arrive to the same assertion with
a smaller number of parameters.
Therefore, we assume that for all $2\le i\le c$ one has
$A_i<D_i$ (in particular, this is the case when~\mbox{$c=1$}).
Note that after such cancelation the
condition $N\ge 2c+1$ is preserved.

Denote $\Pi_A=A_1\cdot\ldots\cdot A_N$ and $\Pi_D=D_1\cdot\ldots\cdot D_c$.
Applying Theorem~\ref{theorem:Lagrange} and keeping in
mind that
$A_i<D_i$ for $2\le i\le c$, we obtain an equality
\begin{multline*}
(\underbrace{1,\ldots,1}_{N},\underbrace{-1,\ldots,-1}_{c})=
\lambda\left(\frac{\Pi_A}{A_1},\ldots,\frac{\Pi_A}{A_N},
\frac{\Pi_D}{D_1},\ldots,\frac{\Pi_D}{D_c}\right)+\\+
\sum_{i=1}^{N-1} \lambda_i (\underbrace{0,\ldots,0}_{i-1},-1,1,0,\ldots,0)
+\lambda_N (\underbrace{0,\ldots,0}_{N-1},-1,0,\ldots,0)+\\
+\sum_{i=N+1}^{N+c-1}
\lambda_i (\underbrace{0,\ldots,0}_{i-1},-1,1,0,\ldots,0)+
\lambda_0(2,\underbrace{0,\ldots,0}_{N-1},-1,0,\ldots,0)+
\lambda_{-1}(-1,0,\ldots,0)
\end{multline*}
for some non-negative numbers $\lambda$ and $\lambda_i$, where $-1\le i\le N+c-1$.

Let $\Lambda'\subset\{-1,0,1\ldots,N+c-1\}$ be the set of indices
such that for any $i\in\Lambda'$ one has~\mbox{$\lambda_i>0$},
and let $\Lambda''=\{i\mid u^{(i)}=u^{(i+1)}\}$.
By
Theorem~\ref{theorem:Lagrange} for any~\mbox{$i\in\Lambda'$} the corresponding
inequality turns into equality. Thus, by
Lemma~\ref{lemma:go-up-and-down} we have the following possibilities:
\begin{itemize}
\item[(I)]
$$(A_1,\ldots,A_N,D_1,\ldots,D_c)=
(\underbrace{\overbrace{A,\ldots,A}^k,\overbrace{B,\ldots,B}^l}_N,\underbrace{D,\ldots,D}_c)$$
for some $k\ge 1$ and $l\ge 0$;
\item[(II)]
$$(A_1,\ldots,A_N,D_1,\ldots,D_c)=
(\underbrace{\overbrace{A,\ldots,A}^k,\overbrace{B,\ldots,B}^l}_N,2A,\underbrace{D,\ldots,D}_{c-1})$$
for some $k\ge 1$ and $l\ge 0$;
\item[(III)]
$$(A_1,\ldots,A_N,D_1,\ldots,D_c)=
(\underbrace{\overbrace{A,\ldots,A}^k,\overbrace{B,\ldots,B}^l,\overbrace{1,\ldots,1}^p}_N,
2A,\underbrace{D,\ldots,D}_{c-1})$$
for some $k\ge 1$, $l\ge 1$, and $p\ge 1$.
\end{itemize}
In all cases one has $c\ge 1$ and $N\ge 2c+1\ge 3$.
In cases~(II) and~(III) the inequality~\mbox{$2A\ge D$} holds.

In case~(I) we have $L(P)<0$ provided that $A_1\ge N$; to see this apply
Lemma~\ref{lemma:aabbdd}.
In case~(II) we have $L(P)<0$ provided that $A_1\ge N$; to see this apply
Lemma~\ref{lemma:aabb2dd} with~\mbox{$r=c-1$} and $p=0$.
Finally, in case~(III) we have $L(P)\le 0$ provided that~\mbox{$A_1\ge N$};
to see this apply Lemma~\ref{lemma:aabb2dd} with $r=c-1$.
\end{proof}

\end{document}